\DeclareSymbolFontAlphabet{\mathbb}{AMSb}
\DeclareSymbolFontAlphabet{\mathbbl}{bbold}
\algrenewcommand\algorithmicrequire{\textbf{Input:}}
\algnewcommand\algorithmicinput{\textbf{Initialize:}}
\algnewcommand{\Initialize}{\item[\algorithmicinput]}
\algrenewcommand\algorithmicensure{\textbf{Output:}}
\renewrobustcmd{\bfseries}{\fontseries{b}\selectfont}
\renewrobustcmd{\boldmath}{}
\newrobustcmd{\BB}{\bfseries}
\newtheorem{proposition}{Proposition}
\newtheorem{lemma}{Lemma}
\newtheorem{theorem}{Theorem}
\DeclareMathAlphabet\mathbfcal{OMS}{cmsy}{b}{n}
\newcommand{\keywords}[1]{\small \textbf{\textit{Keywords:}} #1}
\newenvironment{proof}[1]{\textbf{Proof of {#1}}}{ \hfill$\square$}
\newcommand{\vx}{\vec x}
\newcommand{\vu}{\vec u}
\newcommand{\vc}{\vec c}
\newcommand{\vzero}{\vec 0}
\newcommand{\vbeta}{\vec \beta}
\newcommand{\ccdot}{\cdot }
\begin{document}


\title{\bfseries {\Large Dynamic Basis Function Generation for Network Revenue Management}
}

\author{\small Daniel Adelman, Christiane Barz, Alba V. Olivares-Nadal\thanks{The University of Chicago, Booth School of Business, \textit{Dan.Adelman@chicagobooth.edu}, University of Zurich, \textit{christiane.barz@uzh.ch}, UNSW Business School, \textit{a.olivares\_nadal@unsw.edu.au}}}

\date{\small \today}
\maketitle
\thispagestyle{empty}

\begin{abstract}
    This paper introduces an algorithm that dynamically generates basis functions to approximate the value function in Network Revenue Management. Unlike existing algorithms sampling the parameters of new basis functions, this  Nonlinear Incremental Algorithm (NLIAlg)  iteratively refines the value function approximation by optimizing these parameters.
For larger instances, the Two-Phase Incremental Algorithm (2PIAlg) modifies NLIAlg to leverage the efficiency of LP solvers. It reduces the size of a large-dimensional nonlinear problem and transforms it into an LP by fixing the basis function parameters, which are then optimized in a second phase using the flow imbalance ideas from \citet{adelmanklabjan2012}. This marks the first application of these techniques in a stochastic setting. The algorithms can operate in two modes: (1) {\it Standalone mode}, to construct a value function approximation from scratch, and (2) {\it Add-on mode}, to refine an existing approximation. Our numerical experiments indicate that while NLIAlg and 2PIAlg in standalone mode are only feasible for small-scale problems, the heuristic version of 2PIAlg (H-2PIAlg) in add-on mode, using the Affine Approximation and exponential ridge basis functions, can handle extremely large instances that may cause benchmark network revenue management methods to run out of memory. In these scenarios, H-2PIAlg delivers substantially better policies and upper bounds than the Affine Approximation. Furthermore, H-2PIAlg achieves higher average revenues in policy simulations compared to network revenue management benchmarks in instances with limited capacity.

\end{abstract}

\keywords{Approximate Dynamic Programming, Basis Generation, Revenue Management, Row Generation}

\maketitle

\section{Introduction}

Dynamic programming \citep{bellman} is an optimization tool \textcolor{black}{that is typically} used to find the optimal policy for a controlled stochastic process. Usually, it involves assigning a value to each state of the process through the so-called value function. Under certain conditions, both the value function and the optimal policy can be retrieved by solving a linear program \textcolor{black}{(LP) with one variable for each state in the state space and one constraint for each feasible state-action pair. In applications with high-dimensional state spaces, solving this LP may become computationally intractable for two reasons. First, there might be too many variables.} Second, the number of constraints may become impractically large. This challenge calls for an extension of traditional dynamic programming methods, leading to the emergence of Approximate Dynamic Programming (ADP).

To address dimensionality challenges via ADP, value function approximations seek to capture the underlying structure of the optimal value function without explicitly enumerating values for each state. Frequently, a linear combination of basis functions is chosen to approximate the true value function. This technique effectively reduces the number of parameters to be estimated (representing variables in the LP) by focusing on characterizing the basis functions and their weights within the proposed approximation.

The estimation of these new parameters can be addressed by plugging the value function approximation into the LP  formulation \citep{fariasroy2003}\textcolor{black}{; we call this problem the {\it approximate} LP}. However, a lingering issue remains: the number of constraints may be too large for any solver to handle. This challenge is often addressed by using row generation, an iterative optimization technique employed in mathematical programming. The algorithm starts with a relaxed version of the \textcolor{black}{approximate LP}, called the {\it master program}, that only considers a subset of the constraints. In the context of value function approximation, this means that constraints are imposed only for a subset of feasible state-action pairs. At each step, new rows (i.e., constraints corresponding to some state-action pairs) are generated by fixing the parameters of the approximation and finding state-action pairs that induce violated constraints. These newly generated rows are then added to the master program, with the objective of progressively refining the estimate of the approximation parameters over time. Once the row generation algorithm concludes, it provides a value function approximation.


While this procedure has been widely employed in ADP (see, for instance, \cite{adelman2007,topaloglu}), the set and number of basis functions in the value function approximation are typically predefined. Both are crucial for the quality and tractability of the approximation. For instance, a larger number of basis functions  \( K \) may provide a better approximation due to increased degrees of freedom, but it also introduces more parameters, potentially leading to dimensionality issues. Thus, selecting the appropriate values for
$K$ requires balancing the quality of the approximation with the tractability of the problem. 

The purpose of this paper is to propose an algorithm that iteratively adds basis functions from a given set by solving an optimization problem. This algorithm allows the user to either stop the algorithm based on her satisfaction with the current approximation or to further improve it at an additional computational cost. In particular, the suggested Nonlinear Incremental Algorithm (NLIAlg) 
\textcolor{black}{iteratively follows these two steps: (I) estimating the weights and parameters of the current basis functions using a row generation algorithm, and (II) increasing $K$ until the improved approximation meets a user-specified criterion.} \textcolor{black}{For large instances, we propose to estimate the parameters of the approximation in two different phases within Step (I). We first transform the master problem into an {LP}  by fixing the parameters that define the basis functions, which would otherwise make the problem nonlinear. 
In Phase (i), we follow the {\it flow imbalance} ideas introduced in \citet{adelmanklabjan2012} to estimate those parameters. Given these fixed parameters, the other parameters are then optimized in Phase (ii). This constitutes the Two-Phase Algorithm (2PIAlg).} 

\color{black}
The proposed approaches can be used to either (1) find a suitable value function approximation from scratch in a {\it standalone mode} (when initialized with an empty set of initial basis functions), or (2) enhance a given value function approximation in an {\it add-on mode}. 
Despite the computational improvement of \textcolor{black}{the two-phase-estimation approach}, our numerical results highlight that \textcolor{black}{the standalone mode} is practical only for toy problems. This is why for problems of realistic size, we will \textcolor{black}{use the add-on mode with an heuristic version of 2PIAlg}.

Summarizing, the main contributions of this paper are the following. 

\begin{itemize}
\item[(a)]\textit{We develop the first algorithm in the NRM context that iteratively refines the approximation by sequentially optimizing and adding new basis functions.} These algorithms endow the user with control over the approximation process. Unlike traditional methods where number and shape of the basis functions are predefined, our approach allows users to either stop the algorithm if the current approximation meets their criteria or continue refining it further by increasing the number of basis functions \( K \). This flexibility balances quality and computational cost, addressing the critical issue of the selection of \( K \).


    \item[(b)]   \textit{We extend for the first time the flow-balance gap ideas of \cite{adelmanklabjan2012} to a stochastic context.} Originally developed for deterministic settings, we adapt this approach to the NRM problem. While \citet{pakimanetal2019} and \citet{bhat} apply sampling-based approaches to generate basis functions, our method leverages mathematical programming to construct basis functions that maximize flow imbalances in the NRM problem. Under certain conditions we prove that if the family of basis functions has enough fidelity to perfectly fit the optimal value function, and there is still a gap between the optimal objectives of the LP and the ALP problem, then one can find a new basis function that improves the current solution. 


   \item[(c)] \textit{We carry out a performance analysis against strong NRM benchmarks.} Our computational study compares the heuristic version of 2PIAlg (H-2PIAlg) in add-on mode against well-established methods in the NRM literature, namely AA, SPLA, and NSEP, using two sets of benchmark instances. Our numerical results demonstrate that H-2PIAlg significantly outperforms the policies and upper bounds provided by AA. Additionally, it achieves superior policies and upper bounds compared to SPLA and NSEP in scenarios where capacity is scarce, although these improvements come at a greater computational cost. However, thanks to its iterative nature, H-2PIAlg is able to provide solutions for instances where SPLA and NSEP run out of memory.

\end{itemize}

The structure of the paper is as follows. Section \ref{sec:literature} reviews the relevant literature. In Section \ref{sec:approxNRM}, we introduce the dynamic programming formulation for the NRM problem and present the ideas behind our iterative approximation approach. We also introduce the concept of flow imbalance and discuss how it can be implemented. Section \ref{sec:NLIAlg} details the NLIAlg, explaining how row generation is employed to implement the approximation and discussing criteria for deciding when to stop adding new basis functions. Section \ref{sec:2phase} explains how to modify NLIAlg to yield the more tractable 2PIAlg. In Section \ref{sec:heuristics}, we discuss heuristic modifications for the algorithms, while in Section \ref{sec:basis_choice} we explore the choice of basis functions. Finally, Section \ref{sec:experiments} presents our numerical analysis, and Section \ref{sec:conclusion} concludes with remarks and potential extensions. All proofs can be found in the Appendix.

\color{black}

\section{Literature Survey \label{sec:literature}}
\textcolor{black}{In this section, we point at overviews on NRM and ADP before we summarize 
particular classes of basis functions suggested when using approximate linear programming in Section \ref{sec:lit:basis}. In Section \ref{sec:lit:methods}, we then provide an overview of the related literature applying different methods within approximate linear programming. Given the vast literature in this area, we focus on papers applying such methods for NRM problems and highlight the novelty of our contributions.}

Chapter 3 of \citet{talluriryzin2004book} provides an introduction to NRM problems, formulating \textcolor{black}{the decision problem as a Markov decision process with} states $\vx$ representing the remaining number of seats on all legs.  \textcolor{black}{In this context, the curse of dimensionality emerges due to exponential growth of the state space in the number of legs, prompting the application of ADP. }

\textcolor{black}{As mentioned above, ADP provides an array of methods to approximate Markov decision processes in the face of the curse of dimensionality. A summary of simulation-based approaches is given in \citet{powell2007}. On the other hand, approximate linear programming approximates the value function $v_t(\cdot)$ of a Markov decision process as a linear combination of basis functions and integrates this approximation into the linear programming formulation \citep{schweitzerseidmann1985,fariasroy2003,adelman2007}.}

\subsection{\textcolor{black}{Basis Functions in Approximate Linear Programming}\label{sec:lit:basis}}


In the context of NRM, \cite{adelman2007} was the first to suggest an additively separable  value function approximation within the linear programming formulation of the corresponding Markov decision process. By approximating the value function with an affine function with time-dependent slopes, \citet{adelman2007} demonstrated the efficiency of column generation in solving the approximated LP.
 
This affine approximation implies two major assumptions: (i) the different resources are separable, i.e., the marginal value of one unit of resource $i$ is independent of the number of units available for resources $j\neq i$; and (ii) the marginal value of one unit of resource $i$ is constant and hence independent of the number of units available for this resource. 

To remove Assumption (ii), different basis functions have been suggested. In the general separable framework, the values of the value function in different states can be directly viewed as variables. This broader approximation is often referred to as the Separable Piecewise Linear Approximation (SPLA). \citet{fariasroy2007} use constraint sampling to analyze this case, \citet{zhangvossen2015} develop a reduction and \citet{kunnumkaltalluri2014} prove the equivalence between this approximation and the Lagrangian relaxation introduced by
\citet{topaloglu2009}.

Assumption (i), separability, is pervasive in the literature. However, in the context of NRM non-separable approximations have been explored by \citet{zhang2011}, who combines the solutions of decomposed single-leg revenue management problems using a minimum-operator, leading to a non-separable approximation. Additionally, \citet{Simon} propose basis functions explicitly dependent on more than one resource, suggesting reductions for non-separable approximations.

In a different setting, \citet{guestrinkoller2002} consider non-separable basis functions depending on a small number of variables each.  \citet{linetal2019} advocate for constraint-violation learning for non-separable approximations for inventory control and energy storage. In contrast to sampling-based approaches such as \citet{pakimanetal2019} and \cite{bhat}, we generate basis functions analytically using row generation. Nevertheless, both works share with ours the use of basis functions with universal approximability.

Regarding our chosen approximation structure, the ideas presented in \citet{adelmanklabjan2007} align closely with our work.  They suggest using ridge functions to capture interactions between different components of the state variable. However, unlike our work, they apply this approximation in a deterministic setting without improving a given approximation.

\subsection{\label{sec:lit:methods}\textcolor{black}{Solution Methods for Approximate Linear Programming }}

\textcolor{black}{For any choice of basis functions, the use of a linear combination of them within the linear programming formulation typically yields a linear problem with few variables and many constraints.} The resulting problem can then be solved by row or column generation \citep{trickzin1997,adelman2004,adelman2007,zhangadelman2009,kunnumkaltalluri2014}. Sometimes, the resulting problem can also be reduced analytically to allow for a computationally more efficient solution; see, e.g. \citet{fariasroy2007}, \citet{topaloglu},  \citet{zhangvossen2015}, \citet{kunnumkaltalluri2015b} and \citet{Simon}. Although unconventional in traditional NRM, alternative approaches to solving the approximated LP include constraint sampling \citep{fariasroy2004} and constraint violation learning \citep{linetal2019}. 

In this paper, we suggest a row generation method that iteratively adds basis functions to a given approximation (which can also be an empty set of basis functions). Methodologically, our algorithm mirrors early ideas from \citet{adelman2007}, but for a non-linear setting and deciding on the number of basis functions to be added.

\section{\textcolor{black}{Iterative Approximation Refinement in ADP for NRM} \label{sec:approxNRM}}

\textcolor{black}{We begin by presenting the dynamic programming formulation of the NRM problem in Section \ref{sec:NRM}. Section \ref{sec:NLA} introduces the nonlinear approximation we propose. Estimating the parameters for this approximation involves solving a large-dimensional nonlinear program, which we outline in Section \ref{sec:ALP}. In this section, we also discuss how the approximation can be iteratively refined by increasing the number of basis functions. Finally, in Section \ref{sec:refining}, we present the dual formulation of the linearized counterpart of such a large-dimensional program, which allows us to introduce the flow-balance constraints. }

\subsection{The NRM Problem\label{sec:NRM}}

We consider a finite discrete-time horizon with time units $t \in \{1,\dots \tau\}$ to sell products $j \in \{1,\dots J\}$. At each time $t$, at most one customer arrives and requests a particular product $j$ with probability $p_{t,j}$.  No customer arrives with probability $1-\sum_{j}p_{t,j}$. The sale of product $j$ at time $t$ generates revenue $f_{t,j}$ and reduces the number of units of available resources $i\in \{1,\dots I\}$ by $a_{ij}\in\{0,1\}$.  The values of $a_{ij}$ are summarized in the consumption matrix $A \in \{0,1\}^{I \times J}$. At the beginning of the horizon, i.e. when $t=1$, the seller has $c_i$ units of resource $i$, that are referred to as the capacities of the resources and are denoted jointly as $\vec{c} = (c_1,\dots c_I)$. The remaining capacity of resource $i$ at any given point in time is denoted by $x_i$ with $\vec{x} = (x_1,\dots x_I)$.   We assume that at the end of the selling period, i.e. when $t=\tau+1$, remaining units of resources are worthless.
	
    At every period $t$, the seller must determine whether a customer request for product $j$ is accepted  ($u_j=1$) or rejected ($u_j=0$) with  $\vec{u}=(u_1,\dots,u_J)$. Given a customer request for product $j$, this binary variable allows us to write the revenue obtained as $f_{j}u_j$ and  the remaining resources as $\vx_t -u_j\vec a_j$, where $\vec a_j$ denotes the $j$-th column of matrix $A$. At every state $\vx$ we assume that we cannot sell more resources than we have, thus our set of feasible actions is
$$\mathcal U_{\vec{x}} := \left\{ \vec{u} \in \{0,1\}^{{{J}}} \mid u_j \vec{a}_{j} \leq \vec{x} \ \ \forall j\right\}.$$
The set of feasible remaining resources is given by
$$\mathcal X_{t} :=  \begin{cases}
\{(c_1,\dots,c_I)\} & t = 1 \\
\{0,\dots c_1\}\times\dots\times \{0,\dots c_I\}, & t \geq 2. 
\end{cases}. $$
We denote the hypercube convex hull of $\mathcal X_{t}$ by $conv(\mathcal X_{t} )$. Furthermore, we denote the set of feasible state-action pairs $(\vec x, \vec u)$ at time $0\le t\le\tau$  by 
	$$
	(\vec x, \vec u)\in  \mathcal{F}_t = \{ (\vx,\vu):  \vx\in\mathcal X_t , \vu\in \mathcal U_{\vx} \}=\{ (\vx,\vu):  x_i \in \{0,1,.., c_i\} ,  \quad u_j\in \{0, 1\}, \quad 
	u_ja_{i,j}\le x_i \ \forall i,j \}.
	$$

The objective is to maximize the total expected revenue that can be gained given initial capacity $\vec c$ over the selling horizon $t=1,\dots,\tau$. In this paper, we do not consider cancellations. Then, the maximum expected revenue that can be obtained over the selling horizon $t = 1,\dots,\tau$ given remaining resources $\vec x$ can be computed using the optimality equations
\begin{equation}\label{OE}
v_t^*(\vec x)=\max_{\vec u\in\mathcal U_{\vec x}}\left\{\sum_{j=1}^Jp_{t,j}\left[f_ju_j+v_{t+1}^*(\vec x-\vec a_ju_j)\right]+(1-\sum_{j=1}^Jp_{t,j})v_{t+1}^*(\vec x)\right\}, \quad \forall t=1,\dots,\tau-1,\vec x\in\mathcal X_t
\end{equation}
and $v^*_{\tau}(\vec x)=\max_{\vec u\in\mathcal U_{\vec x}}\left\{\sum_{j=1}^Jp_{\tau,j}f_ju_j\right\}$ for all $\vec x\in\mathcal X_{\tau}$.  Instead of tackling the  above optimality equations directly, we formulate the following LP 

\begin{samepage} 
\begin{flalign}
\min_{v_t(\vec x)} &\quad v_1(\vec c)& \nonumber\\ 
s.t.: &\quad v_t(\vec x) \geq \left\{\sum_{j=1}^Jp_{t,j}\left[f_ju_j+v_{t+1}(\vec x-\vec a_ju_j)\right]+(1-\sum_{j=1}^Jp_{t,j})v_{t+1}(\vec x)\right\} \label{LP}\tag{LP}\\
&\phantom{v_{\tau}(\vec x)\geq \sum_{j=1}^Jp_{t,j}f_ju_j} \hspace{4.2cm} \forall (\vec x,\vec u)\in\mathcal F_{t},t=1,\dots,\tau-1 \nonumber\\
&\quad v_{\tau}(\vec x)\geq \sum_{j=1}^Jp_{\tau,j}f_ju_j \hspace{4.2cm} \forall (\vec x,\vec u)\in\mathcal F_{\tau}. \nonumber
\end{flalign}
\end{samepage}
As pointed out in Proposition 1 of \cite{adelman2007}, for every $\vx$ any feasible solution $\hat v_t(\vx)$ to \eqref{LP} gives an upper bound of the optimal value function $v_t^*(\vx)$ solving \eqref{OE}. According to complementary slackness, an optimal solution to \eqref{LP} fits the optimal value function exactly for states $\vx$ with non-zero visit probability. In particular, the optimal solution to \eqref{LP} evaluated at period 1 and at full capacity, $\hat v_1^*(\vec c)$, coincides with the optimal value function evaluated at that state; i.e., $\hat v_1^*(\vec c)= v_1^*(\vec c)$.

However, this NRM problem is hardly tackled directly because it suffers from the well-known curse of dimensionality. Indeed, Problem \eqref{LP} has as many constraints as feasible state-action pairs and functions $v_t(\cdot)$ may lie on a highly dimensional space. As a consequence, this problem can only be solved exactly for very small instances. To solve larger instances, typically the value function is approximated and a row generation algorithm for \eqref{LP} is implemented or equivalent reductions are developed. 

\subsection{The Value Function Approximation\label{sec:NLA}}

We denote any given approximation of the value function by $\psi_t(\cdot)$, and suggest an extension that accommodates non-separable terms and nonlinearities. More specifically, for any given $\psi_t(\cdot)$ we approximate the value function $v_t(\cdot)$ by the Nonlinear (and \textcolor{black}{potentially} non-separable) Approximation (NLA)
\begin{align}
v_t^*(\vec x)\approx \psi_t(\vx) +\xi_t -\sum_{k=1}^K V_{t,k} \phi(\vx;\vbeta_{k}), \label{NLA} \tag{NLA}
\end{align}
with offset $\xi_{t}$, number of additional basis functions $K\in\mathbb N$, weights $V_{t,k}\in\mathbb R$ for $t=1,\dots,T, k=1,\dots K$ 
and nonlinear, {nonseparable} functions $\phi(\vx;\vbeta_{k})$ 
in $\vx$ with parameters $\beta_{i,k}\in\mathbb R$. 
Even though the same basis functions are used in every period $t$, the weights are time-dependent, allowing for very differently shaped approximations over time. While the value function may not be jointly concave, numerical approximations of the value functions \eqref{NLA} often exhibit a concave structure. To predominantly obtain non-negative weights with convex basis functions, we subtract the weighted sum of basis functions in \eqref{NLA}.



\textcolor{black}{The quality of the optimal approximation in the form \eqref{NLA} depends on the selection of the baseline approximation $\psi_t(\cdot)$, the class of basis functions $\phi(\cdot)$, and the number of basis functions $K$.} When $\psi_t(\vx)=0$ for all $t=1,\dots,T, \ \vx \in \mathcal{X}_t$, \eqref{NLA} generates a new approximation that does not fit the residuals of any baseline approximation \textcolor{black}{({\it standalone mode})}. Nonetheless, the fundamental concept behind \eqref{NLA} lies in its capacity to improve a given approximation $\psi_t(\cdot)\neq 0$ \textcolor{black}{({\it add-on mode})}. 
%
%

By substituting the exact value function with \eqref{NLA} on the right-hand side of \eqref{OE}, the proposed value function approximation yields the following policy for a given $\psi_t(\cdot)$

\begin{equation}\label{policy}
u_{t,j}(\vx)=\left\{ 
\begin{array}{ll}
1 & \mbox{ if }  f_j \geq  \displaystyle (\psi_{t+1}(\vx)-\psi_{t+1}(\vx-\vec a_j u_j))- \sum_{k=1}^K V_{t+1,k} \left(\phi(\vx;\vbeta_k) -\phi(\vx-\vec a_j,\vbeta_k)   \right)\\&\mbox{ and }  \vx \geq \vec a_j,\\
&\\
0 & \mbox{ otherwise.}\\
\end{array} \right.
\end{equation}
In the subsequent sections, we will refer to this policy as the policy based on (NLA). {To evaluate this policy, we first choose a class of basis functions $\phi(\cdot)$ and a baseline approximation $\psi_t(\cdot)$. If the current policy is not satisfactory, we can increase $K$ to improve the current approximation. An iterative refinement of \eqref{NLA} can be performed as follows:

\begin{itemize}
    \item[\bf Step (I):] Estimate $\xi_t, V_{t,k}, \vbeta_k$ for a given number of basis functions $K$
    \item[\bf Step (II):] Decide if $K$ should be increased. If yes, increase $K$ and return to Step (I).
\end{itemize}
The resulting value function approximation can then be used in \eqref{policy}.

\color{black}

\subsection{The Approximate LP  \label{sec:ALP}}

To implement Step (I), we leverage the nonlinear problem that arises from plugging \eqref{NLA} into the linear programming formulation \eqref{LP} for a fixed $K\in\mathbb N$

\begin{samepage}
\begin{align}
    \min_{\xi_t,V_{t,k},\vbeta_k}& \quad \psi_1(\vc) +\xi_1 -\sum_{k=1}^K V_{1,k} \phi(\vc;\vbeta_{k})& \nonumber\\
s.t.:&\quad  \psi_t(\vx) +\xi_t -\sum_{k=1}^K V_{t,k} \phi(\vx;\vbeta_{k}) \geq &&\label{AP} \tag{AP}\\
& \qquad \qquad  \sum_{j=1}^Jp_{t,j} \left[ f_ju_j+\psi_{t+1}(\vx-\vec a_ju_j) +\xi_{t+1}-\sum_{k=1}^K V_{t+1,k} \phi(\vx-\vec a_ju_j;\vbeta_{k}) \right] &&\nonumber \\
& \qquad \qquad+(1-\sum_{j=1}^Jp_{t,j})\left( \psi_{t+1}(\vx) +\xi_{t+1}-\sum_{k=1}^K V_{t+1,k} \phi(\vx;\vbeta_{k})\right)\ \forall  (\vec x,\vec u)\in\mathcal F_{t},t=1,\dots,\tau-1 \nonumber\\
&\qquad  \psi_{\tau}(\vx) +\xi_{\tau}-\sum_{k=1}^K V_{\tau,k} \phi(\vx;\vbeta_{k}) \geq \sum_{j=1}^Jp_{\tau,j} f_ju_j \qquad\qquad\ \forall (\vec x,\vec u)\in\mathcal F_{\tau}. & &\nonumber
\end{align}
\end{samepage}

The solution of \eqref{AP} provides the parameters \textcolor{black}{for Step (I)}. As mentioned in \cite{adelman2007}, the approximate program \eqref{AP} finds the lowest upper bound on $v_1(\vec c)$ of the form \eqref{NLA}. This means that choosing basis functions that are dense on the space where the value function lies may yield an approximation with the potential to fit $v_1(\vec c)$ arbitrarily close. Since an optimal solution to \eqref{LP} also fits the optimal value function exactly for states with non-zero visit probability, \eqref{AP} may also fit the optimal value function exactly at such states under a strong approximation. In sum, the ability of the proposed approximation \eqref{NLA} to fit the value function seems to be crucially dependent on the choice of the basis functions $\phi(\cdot)$. \textcolor{black}{In addition, the shape of $\phi(\cdot)$ in $\vbeta_k$ also affects the computational burden of solving \eqref{AP}. More specifically, nonlinear basis functions in $\vbeta_k$ lead to a nonlinear problem \eqref{AP}.} We provide guidance on the choice of basis functions in Section \ref{sec:basis_choice}.

\textcolor{black}{A straightforward implementation of Step (II) is to add variables $V_{t,K+1}$ and $\vbeta_{K+1}$ to the {\it nonlinear} problem \eqref{AP} in every iteration. By increasing the degrees of freedom with every set of newly added variables, this approach might have the potential to refine the current approximation with increasing $K$.}

\subsection{Using Flow-Balance Constraints to Refine the Value Function Approximation \label{sec:refining}}
\textcolor{black}{Since Problem \eqref{AP} might be nonlinear in $\vbeta_k$, $k=1,...,K$, \textcolor{black}{it might be} difficult to solve even for a moderate number of basis functions.  However,   for fixed values $\vbeta_k$, $k=1,...,K$, \eqref{AP} is a finite-dimensional LP with} 
variables $\xi_t$ and $V_{t,k}$. We call this resulting  problem {\it linearized} \eqref{AP}, and strong duality holds with the following dual formulation:

\begin{alignat}{2}
  \max_{\lambda_{t,(\vx,\vu)} \ge 0} \quad & \sum_{t=1}^{\tau-1} \sum_{(\vx,\vu) \in \mathcal{F}_t}  \lambda_{t,(\vx,\vu)}\left\{ \sum_{j=1}^J p_{j,t} \left[f_j u_j+\psi_{t+1}(\vx-\vec{a}_j u_j)\right] +\Big(1- \sum_{j=1}^J p_{j,t}\Big)\psi_{t+1}(\vx)-\psi_t(\vx) \right\} \nonumber \\
& \quad + \sum_{(\vx,\vu) \in \mathcal{F}_\tau} \lambda_{\tau,(\vx,\vu) } \Big( \sum_{j=1}^J p_{j,\tau} f_j u_j-\psi_{\tau}(\vx) \Big)  
  \label{D}\tag{D} \\
\text{s.t.} \quad & \sum_{(\vx,\vu) \in \mathcal{F}_t} \lambda_{t,(\vx,\vu)} 
= \begin{cases}
1 & \text{if } t=1 \\
\displaystyle \sum_{(\vx,\vu) \in \mathcal{F}_{t-1} } \lambda_{t-1,(\vx,\vu)} & \text{if } t=2,\dots,\tau
\end{cases} 
\nonumber\\
& \sum_{(\vx,\vu) \in \mathcal{F}_t} \lambda_{t,(\vx,\vu)}  \phi(\vx;\vbeta_k) 
= \begin{cases} 
\phi(\vc;\vbeta_k) & \forall k,t=1 \\
\displaystyle\sum_{(\vx,\vu) \in \mathcal{F}_{t-1}} \lambda_{t-1,(\vx,\vu)} \Big( \sum_{j=1}^J p_{j,t-1} \phi(\vx-\vec{a}_j u_j;\vbeta_k)  \\
 \quad + (1 - \sum_{j=1}^J p_{j,t-1}) \phi(\vx;\vbeta_k) \Big) & \forall k, t=2,\dots,\tau. 
\end{cases} \label{constraint_beta} \tag{D.FB}
\end{alignat}

In the problem above, $\lambda_{t,(\vec x,\vec u)}$ denotes the dual variables for the constraints  $(\vx,\vu)\in \mathcal{F}_{t}$. The so-called Flow-Balance constraint \eqref{constraint_beta} is associated with primal variables $V_{k,t}$. It is the only constraint that includes the parameters $\vbeta_1,\dots,\vbeta_K$ of the basis function. In any feasible solution to this dual, \eqref{constraint_beta} will be \textcolor{black}{satisfied} for the values $\vbeta_1,\dots,\vbeta_K$. If \eqref{constraint_beta} does not hold for some $\vbeta$, adding a basis function with these parameters could change the solution and refine the approximation. If there are no values $\vbeta$  within our selected set of basis functions \textcolor{black}{violating} \eqref{constraint_beta}, the \textcolor{black}{current approximation cannot be further improved using any additional $\phi(\cdot;\vbeta)$ from this selected set of basis functions .}


\textcolor{black}{This idea to improve value function approximations via flow-balance conditions was originally suggested by \cite{adelmanklabjan2012} in a deterministic context  with flow-balance equations similar to \eqref{constraint_beta}.} For deterministic MDPs and piecewise linear basis functions, \cite{adelmanklabjan2007} can even prove the convergence of such an algorithm. To mathematically formulate this idea in the context of NRM, we \textcolor{black}{subtract the right-hand side of \eqref{constraint_beta} on both sides of  \eqref{constraint_beta}  for all $t$ and view the resulting left-hand side} as functions in $\vbeta\in\mathbb R^I$. For given $t$, the result quantifies the flow-imbalance of \eqref{constraint_beta} given $\vbeta$:
\begin{flalign*}
\ell_1(\vbeta)= &\displaystyle \sum_{(\vx,\vu) \in \mathcal{F}_1} \phi(\vx;\vbeta) \lambda_{1,(\vx,\vu)}+ \phi(\vc;\vbeta)  & 
\end{flalign*}
\begin{flalign*}
\ell_t(\vbeta)=  &\displaystyle \sum_{(\vx,\vu) \in \mathcal{F}_t} \phi(\vx;\vbeta) \lambda_{t,(\vx,\vu)}\\
&\quad - \displaystyle \sum_{(\vx,\vu) \in \mathcal{F}_{t-1} }\left( \sum_{j=1}^J p_{j,t-1}\phi(\vx-\vec a_j u_j;\vbeta)+(1-\sum_{j=1}^J p_{j,t-1})\phi(\vx;\vbeta) \right) \lambda_{t-1,(\vx,\vu)} & \nonumber  \qquad  t=2,\dots,\tau\nonumber.
\end{flalign*}

Adding another basis function $\phi(\vx,\vbeta_{K+1})$ with parameters $\vbeta_{K+1}$ and new variables $V_{t,K+1}$ in the primal generates a new flow-balance constraint in the dual. Adding a basis function with parameters $\vbeta$ that result in no flow imbalance $\ell_t(\vbeta)=0$ for all $t=1,\dots,\tau$, hence will not alter the solution of the dual linear master problem. In contrast, adding a basis function violating constraint \eqref{constraint_beta} will improve the bound given by \eqref{NLA} under certain conditions.

\begin{proposition} \label{lemma:reducebound}
If there exists a unique optimal solution to \eqref{D}, adding a new basis function $\phi(\vx,\vbeta_{K+1})$ with $|\ell_t(\vbeta_{K+1})|>0$ for some $t\in \{1,...,\tau\}$ to the nonlinear approximation \eqref{NLA} reduces the upper bound given by {\it linearized} \eqref{AP}.
\end{proposition}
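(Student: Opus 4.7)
The plan is to work through LP duality twice. First, let $z^*$ denote the current upper bound, i.e., the optimal value of the linearized \eqref{AP} for the existing $K$ basis functions. Because this is a finite-dimensional LP pair with \eqref{D} as its dual and both problems are feasible and bounded, strong duality gives that $z^*$ is also the optimal value of \eqref{D}, attained at the hypothesized unique maximizer $\lambda^*$.

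Next I would analyze the effect of augmenting the approximation with the new basis function $\phi(\cdot;\vbeta_{K+1})$. In the primal this introduces $\tau$ new columns corresponding to the variables $V_{t,K+1}$, so the new primal LP has an optimum $z^*_{\text{new}} \le z^*$ (setting $V_{t,K+1}=0$ for all $t$ recovers the original primal feasible region with the same objective value, and in particular keeps the new primal feasible and bounded). In the dual, these new primal variables produce $\tau$ additional equality constraints, namely \eqref{constraint_beta} evaluated at $\vbeta = \vbeta_{K+1}$; the dual objective itself is unchanged as a function of $\lambda$, so the new and old dual objectives agree at any common $\lambda$.

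The key step is to upgrade $z^*_{\text{new}} \le z^*$ to a strict inequality. By construction of $\ell_t(\cdot)$ as the flow imbalance of \eqref{constraint_beta}, the old optimizer $\lambda^*$ satisfies the newly introduced dual constraint at $\vbeta_{K+1}$ if and only if $\ell_t(\vbeta_{K+1}) = 0$ for every $t \in \{1,\dots,\tau\}$. The hypothesis $|\ell_t(\vbeta_{K+1})| > 0$ for some $t$ rules this out, so $\lambda^*$ is infeasible in the new dual. Now take any feasible $\lambda$ of the new dual: since the new dual has strictly more constraints than the old one, $\lambda$ is also feasible in the old dual, and $\lambda \neq \lambda^*$. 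Uniqueness of $\lambda^*$ for \eqref{D} then forces the old (hence new) dual objective at $\lambda$ to be strictly smaller than $z^*$. Maximizing over feasible $\lambda$ in the new dual gives an optimum strictly below $z^*$, and invoking strong duality for the new LP pair identifies this with the new upper bound.

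The main obstacle is ensuring that the duality bookkeeping is airtight: one must confirm that the new primal remains feasible and bounded (so that strong duality applies a second time and the new dual remains nonempty), and that the dual constraint introduced by each $V_{t,K+1}$ indeed reduces to $\ell_t(\vbeta_{K+1})=0$ once one subtracts the right-hand side of \eqref{constraint_beta} from both sides. The uniqueness of $\lambda^*$ is the essential lever that converts the routine weak monotonicity of LP optima under constraint addition into the strict decrease asserted in the proposition; without it, one could only conclude a weak improvement.
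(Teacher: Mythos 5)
Your proof is correct and follows essentially the same route as the paper's: both argue on the dual side that the old (unique) optimal dual solution violates the flow-balance constraint introduced by $\phi(\cdot;\vbeta_{K+1})$, hence is infeasible for the augmented dual, so every feasible point of the new dual---being feasible for \eqref{D} but distinct from its unique maximizer---has strictly smaller objective, and strong duality transfers the strict decrease to the linearized \eqref{AP} bound. Your explicit check that the augmented primal remains feasible and bounded (so the new dual optimum is attained) is a point the paper leaves implicit, but it is the same argument.
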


The following theorem demonstrates that if a finite set of basis functions can perfectly represent the optimal value function, and there remains a gap between the current approximation and the true optimal objective, then at least one of these basis functions must violate flow balance. 
\begin{theorem} \label{lemma:column_generation}
Assume there exists a finite number of basis functions, $K^* < \infty$, and parameters $\xi_t^*$, $V^*_{t,k} \in \mathbb{R}$, and $\vbeta^*_k \in \mathbb{R}^I$, for $k = 1, \dots, K^*$, such that the optimal value function can be expressed as
$$v_t^*(\vx) = \psi_t(\vx) + \xi^*_t - \sum_{k=1}^{K^*} V^*_{t,k} \phi(\vx; \vbeta^*_k) \quad \forall t=1, \dots, \tau, \ \vx.$$
Consider a given collection $\{\hat{\vbeta}_k\}_{k=1}^{\hat{K}}$, with $\hat{K} < \infty$. Let $(\hat{\lambda}, (\hat{V}, \hat{\xi}))$ be an optimal primal-dual solution pair to the corresponding approximation problem, such that $\hat{v}_1(\vc) > v_1^*(\vc)$, where the approximate value function $\hat{v}_t(\vx)$ is given by $$\hat{v}_t(\vx) = \psi_t(\vx) + \hat{\xi}_t - \sum_{k=1}^{\hat{K}} \hat{V}_{t,k} \phi(\vx; \hat{\vbeta}_k) \quad \forall t=1, \dots, \tau, \ \vx.$$ Define $\ell_t(\vbeta; \hat{\lambda})$ as the flow-balance functions parametrized by $\hat{\lambda}$. Then, there must exist some $k \in \{1, \dots, K^*\}$ and $t \in \{1, \dots, \tau\}$ such that $|\ell_t(\vbeta_k^*; \hat{\lambda})| > 0.$

\end{theorem}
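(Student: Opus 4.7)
The plan is to argue by contradiction: suppose $\ell_t(\vbeta_k^*;\hat\lambda)=0$ for every $k\in\{1,\dots,K^*\}$ and every $t\in\{1,\dots,\tau\}$. I would then construct an augmented approximation problem that appends the basis functions $\{\vbeta_k^*\}_{k=1}^{K^*}$ to the current collection $\{\hat\vbeta_k\}_{k=1}^{\hat K}$ and exhibit a contradiction between its primal optimum and the value attained by $\hat\lambda$ in its dual.

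First I would analyze the augmented primal (call it the augmented \eqref{AP}). The hypothesized exact representation of $v_t^*$ yields an explicit feasible point: assign weights $V_{t,k}^*$ to the new basis functions indexed by $\vbeta_k^*$, zero weights to the original basis functions $\hat\vbeta_k$, and offsets $\xi_t^*$. Direct substitution into the constraints of \eqref{AP} shows that the resulting approximate value function equals $v_t^*$, which satisfies the Bellman optimality equations \eqref{OE} and therefore all the inequality constraints of the augmented \eqref{AP}; its primal objective at $\vc$ equals $v_1^*(\vc)$. Combined with the fact recalled after \eqref{LP} (Proposition~1 of \cite{adelman2007}) that every feasible point of an approximate LP upper bounds $v_1^*(\vc)$, the optimum of the augmented primal is exactly $v_1^*(\vc)$, and strong duality transfers this value to the augmented dual.

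Next I would verify that $\hat\lambda$ is feasible for the augmented dual. The probability-flow constraints in \eqref{D} do not involve the basis functions and are already satisfied by $\hat\lambda$, which was feasible in the current dual. The original flow-balance constraints \eqref{constraint_beta} for $\hat\vbeta_k$, $k=1,\dots,\hat K$, are carried over verbatim into the augmented dual, hence remain satisfied. The only new constraints introduced by the augmentation are the flow-balance equations for $\vbeta_k^*$, $k=1,\dots,K^*$; under the contradiction hypothesis these reduce exactly to $\ell_t(\vbeta_k^*;\hat\lambda)=0$ and are therefore met. Because the objective in \eqref{D} depends only on $\psi_t$, $f_j$, $p_{j,t}$, $\vec a_j$, and not on $\phi$ or $\vbeta$, $\hat\lambda$ achieves the same value $\hat v_1(\vc)$ in both duals. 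By optimality of the augmented dual this forces $\hat v_1(\vc)\le v_1^*(\vc)$, contradicting the assumption $\hat v_1(\vc)>v_1^*(\vc)$ and closing the argument.

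The main obstacle, while ultimately routine, is the substitution step: one must rewrite the constraints of the augmented \eqref{AP} after plugging in the exact decomposition of $v_t^*$ and recognize the Bellman inequalities from \eqref{OE}. The remaining pieces---invariance of the dual objective under basis augmentation and preservation of the original flow-balance and probability-flow constraints for $\hat\lambda$---follow directly from inspection of \eqref{D} once the bookkeeping of old versus new basis indices is laid out. A minor point to verify is that strong duality applies to the (finite-dimensional) augmented LP, which is immediate from feasibility and boundedness of both its primal and dual.
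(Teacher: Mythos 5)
Your proof is correct, but it takes a genuinely different route from the paper's. The paper argues directly, with no contradiction hypothesis and no augmented problem: it first proves a decomposition lemma (Lemma \ref{lemma:decomposition}) expressing the $\hat{\lambda}$-weighted sum of Bellman residuals of any approximation of the form \eqref{NLA} as $\Xi(\xi;\hat{\lambda})+\Psi(\hat{\lambda})+\Phi(V,\vbeta;\hat{\lambda})$, where $\Phi$ collects exactly the first-period term $-\sum_k V_{1,k}\phi(\vc;\vbeta_k)$ and the flow-imbalance terms $-\sum_{t\ge 2}\sum_k V_{t,k}\,\ell_t(\vbeta_k;\hat{\lambda})$. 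Complementary slackness for the optimal pair $(\hat{\lambda},(\hat{V},\hat{\xi}))$ makes this quantity zero for $\hat{v}$; feasibility of $v^*$ in \eqref{LP} together with $\hat{\lambda}\ge 0$ makes it nonnegative for $v^*$; subtracting the two (the $\Psi$ terms cancel, and $\ell_t(\hat{\vbeta}_k;\hat{\lambda})=0$ by dual feasibility) yields
$0\le \bigl[v_1^*(\vc)-\hat{v}_1(\vc)\bigr]-\sum_{t=2}^{\tau}\sum_{k=1}^{K^*}V^*_{t,k}\,\ell_t(\vbeta^*_k;\hat{\lambda})$,
which forces some $\ell_t(\vbeta^*_k;\hat{\lambda})\neq 0$ because $v_1^*(\vc)-\hat{v}_1(\vc)<0$. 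Your argument---append the columns $\vbeta^*_k$, observe that the enlarged primal then attains exactly $v_1^*(\vc)$, and note that under the contradiction hypothesis $\hat{\lambda}$ remains dual-feasible with unchanged objective $\hat{v}_1(\vc)$, so LP duality gives $\hat{v}_1(\vc)\le v_1^*(\vc)$---is the classical column-generation optimality argument, and the steps you flag (invariance of the dual objective under basis augmentation, strong duality for the finite augmented LP) all check out. What each approach buys: yours is shorter, needs no decomposition lemma, and makes transparent that no uniqueness of the dual optimum is required (in contrast to Proposition \ref{lemma:reducebound}); the paper's is quantitative---it shows the $V^*$-weighted sum of flow imbalances must be at least as large in magnitude as the optimality gap $\hat{v}_1(\vc)-v_1^*(\vc)$, not merely that one imbalance is nonzero, which directly motivates the algorithmic step of maximizing $|\ell_t(\vbeta)|$ in \eqref{Wflowimbalance}.
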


\section{The Nonlinear Incremental Algorithm (NLIAlg) \label{sec:NLIAlg}}

We begin in Section \ref{sec:rowgen} discussing how we can use row generation in Step (I) to solve Problem (\ref{AP}) for a fixed number of basis functions. Based on the quality of the current approximation,  Section \ref{sec:increase} then discusses whether the number of basis functions $K$ should be increased in Step (II).

In a nutshell, NLIAlg involves the following two steps:\begin{itemize}
    \item[\bf Step (I):] Estimate $\xi_t, V_{t,k}, \vbeta_k$ for a given 
    $K$ solving \eqref{AP} {\it via row generation}.
    \item[\bf Step (II):] Decide if $K$ should be increased. If yes, increase $K$ and return to Step (I).
\end{itemize}
\color{black}

\subsection{\textcolor{black}{Solving \eqref{AP} for Given $K$ via Row Generation}\label{sec:rowgen}}

Due to the extensive number of constraints in Problem (\ref{AP}), solving it directly for realistically sized problems is impractical. Instead of addressing (\ref{AP}) with constraints for all $(\vec x, \vec u)\in  \mathcal{F}_t$ and $\vx\in\mathcal X_{\tau}$, we employ row generation. That is, we minimize (\ref{AP}) only for a subset of constraints $(\vec x, \vec u)\in \mathcal B_t^{\lambda}\subseteq  \mathcal{F}_t$ for all $t=1,\dots,\tau$.  Additional constraints are added sequentially until a predefined stopping criterion $\mathcal{C}_R$ is met.

Constraints for period $t$ are generated by identifying a state-action pair $(\vec x,\vec u)\in \mathcal{F}_t$ that violates a constraint in \eqref{AP}. We refer to the optimization problem that emerges when seeking these state-action pairs as {\it subproblem} $t$. For fixed $\vec \xi, \vec V, \mbox{and } \vbeta$, and for $t=1,\dots,\tau-1$, subproblem $t$ minimizes the difference between the left-hand side and the right-hand side of the constraints for that time,
\begin{flalign}
\min_{( \vec x,\vec u) \in\mathcal{F}_t} &\quad \medmath{\left\{\psi_t(\vx) +\xi_t -\sum_{k=1}^K V_{t,k} \phi(\vx;\vbeta_{k})   - \sum_{j=1}^Jp_{t,j} \left[ f_ju_j+\psi_{t+1}(\vx-\vec a_ju_j) +\xi_{t+1}-\sum_{k=1}^K V_{t+1,k} \phi(\vx-\vec a_ju_j;\vbeta_{k}) \right] \right.} &&\nonumber \\
& \qquad \left. -(1-\sum_{j=1}^Jp_{t,j})\left( \psi_{t+1}(\vx) +\xi_{t+1}-\sum_{k=1}^K V_{t+1,k} \phi(\vx;\vbeta_{k})\right) \right\}, \qquad \quad  t=1,...,\tau-1 \label{subproblem_t_primal}.  
\end{flalign}

If, for a given $t$, the objective value of the optimal solution to (\ref{subproblem_t_primal}) is positive, the current solution  {$\vec \xi, \vec V,\vbeta$} fulfills all constraints  for time $t$, not just the ones that were explicitly considered in the master problem. If the objective value is negative, the optimal solution   $(\vec x^*_t, \vec u^*_t)$  of the subproblem $t$, with $t=1,...,\tau-1$, is added to the subset of the constraints $\mathcal B_{t}^{\lambda}$ considered in the master problem. The subproblem corresponding to the last time period $\tau$ is
  \begin{flalign}
 \min_{( \vec x,\vec u) \in\mathcal{F}_\tau} & \quad \left\{ \psi_\tau(\vx)+\xi_{\tau}-\sum_{k} V_{\tau,k}  \phi(\vx;\vbeta_k){-} \sum_{j=1}^Jp_{\tau,j} f_ju_ j\right\} , \label{subproblem_Tau_primal}
 \end{flalign}

\noindent {and treated accordingly. } Given a stopping criterion $\mathcal{C}_R$ (e.g., stopping if the objective value of all subproblems is greater than or equal to 0) we can hence solve {(\ref{AP}) using Row Generation.} This algorithm iteratively solves the master problem for given sets of constraints $B^{\lambda}_{t}, t=1, \dots,\tau$, solves the subproblems for all $t=1,\dots,\tau$  and expands the sets $B^{\lambda}_{t}$ until the stopping criterion is met. \textcolor{black}{The pseudocode for the Row Generation Algorithm can be found in the Appendix.}



\subsection{Determining \textcolor{black}{Whether to Increase} the Number of Basis Functions $K$ \label{sec:increase}}

While there are a few exceptions (see, for instance, \citet{pakimanetal2019}), the literature frequently overlooks the question of finding the appropriate number of basis functions to approximate the value function. Leveraging our adaptable approximation design, $K$ can simply be increased until a sufficiently good approximation is reached according to a user-specified stopping criterion $\mathcal{C}_K$. Every time $K$ is increased, a new set of variables $V_{t,K}$ and $\beta_{i,K}$ for all $t=1,...,\tau$ and $i=1,...,I$ is added to the master problem.  \textcolor{black}{A more detailed pseudocode of this procedure is given in the Appendix.}

To assess whether the quality of the approximation is sufficient, the stopping  criterion $\mathcal{C}_K$ could consider the difference between average revenue obtained from a simulation of the current policy (\ref{policy}) and the approximated objective value (\ref{AP}), which provide a lower and upper bound on $v_1(\vc)$,  respectively. If that difference is smaller than some predefined value, the algorithm is terminated.


\section{A Two-Phase Approach \textcolor{black}{to Reduce the Dimensionality of Nonlinear \eqref{AP}} \label{sec:2phase}}

A successful implementation of NLIAlg hinges on the tractability of both the master problem and the subproblems. The master problem is a large-dimensional nonlinear program with many constraints, whose tractability is deeply influenced by the shape of the basis functions in $\vbeta$. 
\textcolor{black}{In this section, we leverage LP solvers' efficiency by fixing $\vbeta_k$ for $k=1,\dots,K$ in \eqref{AP}. This transforms the master problem into an LP, regardless of the shape of $\phi(\cdot)$ in $\vbeta$.} In addition, it reduces the dimensionality of \eqref{AP}. We then integrate the flow-balance idea discussed in Section \ref{sec:refining} to optimize the parameter values of the new basis function $ \vbeta_{K+1}$. Thus the estimation estimation of the parameters in \eqref{NLA} is computed in two phases:

\begin{itemize}
    \item[\bf Step (I)] Use the following two phases to find a solution to (AP):
    \begin{itemize}
        \item[\bf Phase (i)]Estimate $\vbeta_{K}$ by maximizing flow imbalances.
        \item[\bf Phase (ii)]Estimate $\xi_t, V_{t,k}$ for a given number of basis functions $K$ solving {\it linearized} \eqref{AP} via row generation for fixed $\vbeta_{1},\dots,\vbeta_{K}$.
    \end{itemize} 
    \item[\bf Step (II):]  Decide if $K$ should be increased. If yes, increase $K$ and go to Step (I).
\end{itemize}

Although the subproblems remain nonlinear integer optimization programs, their dimensionality is smaller and more manageable with appropriate choices of basis functions and baseline approximation. Section \ref{sec:basis_choice} provides  more details on this choice.


\color{black}
\textcolor{black}{Based on the rationale discussed in Section \ref{sec:refining}} we expect that adding a basis function with parameters $\vbeta_{K+1}$ maximizing $|\ell_t(\vbeta)|$ will result in a closer fit to the real value function. Instead of maximizing the flow imbalance for a specific time period, we suggest finding parameters that maximize the overall weighted flow imbalance across the entire time horizon:

\begin{equation}
\vbeta_{K+1}=\arg\max_{\vbeta} \sum_{t=1}^{\tau} \quad (T-t+1) |\ell_t(\vbeta)|.
\label{Wflowimbalance}
\end{equation}				
Because flow imbalances early on propagate to future periods, intuition suggests that they should be prioritized for closure. This is why we emphasize the importance of the first periods to provide a good value function approximation.  Once the flow-imbalances in early periods are removed, the focus moves towards later periods. \textcolor{black}{A more detailed pseudocode of 2PIAlg is given in the Appendix.}




\color{black}




\subsection{The Heuristic Two-Phase Algorithm (H-2PIAlg) \label{sec:heuristics}}

In this section we propose a Heuristic Two-Phase Incremental Algorithm (H-2PIAlg), which includes modifications to the algorithm that intend to improve tractability. 
\textcolor{black}{First, we propose heuristic approaches to generating rows and basis functions.}
 \color{black}
To generate a new basis function we do not need to find the optimal solution of the flow imbalance problem \eqref{Wflowimbalance}. Parameters $\vec\beta_{K+1}$ yielding an objective value larger than zero still cut off flow imbalances. Therefore, H-2PIAlg uses local solvers and  enforces time limits on \eqref{Wflowimbalance}.

Similarly, solutions yielding negative objective values in \eqref{subproblem_t_primal} or \eqref{subproblem_Tau_primal} constitute violated constraints in \eqref{AP}, and their inclusion in the master problem is likely to bring improvement. As a consequence, H-2PIAlg
also uses local solvers and imposes time limits when solving these subproblems. In addition, instead of solving one subproblem for each time period $t=1,\dots,\tau$, H-2PIAlg reuses state-action pairs found for a given $t$ in other time periods if this state-action pair leads to a negative objective value in the corresponding subproblem. \textcolor{black}{This might potentially avoid solving $\tau$ subproblems at each iteration of the row generation algorithm.}


%


\textcolor{black}{Second, we explain how to exploit the monotonicity of the NRM value function.}
\textcolor{black}{In NRM, the optimal value function is non-negative and decreases over time for a given \(\vec{x}\); i.e., \(v_t(\vec{x}) \geq v_{t+1}(\vec{x})\) for all \(t = 1, \dots, \tau-1\). Additionally, it is increasing in each component \(x_i\) when the resources \(x_s\) for all other legs \(s \neq i\) and time \(t\) are held fixed; i.e., \(v_t(\vec{x}^{+i}) \geq v_t(\vec{x})\) for all $t$, where \(\vec{x}^{+i} = (x_1, \dots, x_{i-1}, x_i + 1, x_{i+1}, \dots, x_I)\)}. If we required \eqref{NLA} to fullfil these properties, these would translate into the following constraints
\begin{flalign}
&0\le \psi_t(\vx) +\xi_t -\sum_{k=1}^K V_{t,k} \phi{_k}(\vx; \vbeta_k)\quad \forall  t=1,...,\tau,\vec x\in\mathcal X_{t} \label{u02}\\
&0\le(\theta_t-\theta_{t+1})+ \sum_{i=1}^I (W_{t,i}-W_{t+1,i})x_i -( \sum_{k=1}^K V_{t,k}-\sum_{k=1}^K V_{t+1,k} \phi_k(\vx: \vbeta_k))\quad  \forall  t=1,...,\tau-1,\vec x\in\mathcal X_{t}  \label{u01}\\
&0\leq W_{t,i}+\sum_{k}V_{t,k}(\phi_k(\vx;\vbeta_k)-\phi_k(\vx^{+i};\vbeta_k) ))  \quad \forall  t=1,...,\tau,i=1,\dots,I,\vec x\in \mathcal{X}_{t} \setminus\mathcal{X}_{t}^{c_i} \label{constraint_extra}
\end{flalign}

\noindent where $\vx^{+i}=(x_1,...,x_{i-1},x_{i}+1,x_{i+1},...,x_I)$ and  $\mathcal{X}_{t}^{c_i}=\{\vx: \ x_i=c_i \}$. Constraints \eqref{u02}-\eqref{u01} are redundant as they are given by constraints with $\vu=\vec 0$ in \eqref{AP}. However, adding constraints \eqref{constraint_extra}  additionally  enforces monotonicity in  $x_j$, thus we may consider our master problem to be (\ref{AP}) with feasible region restricted by constraints of type \eqref{constraint_extra}. We call these problems with additional constraints {\it tightened} \eqref{AP}. Abusing notation, we refer to this kind of tightened problem as 
(\ref{AP})+\eqref{constraint_extra}. Nonetheless, considering all the monotonicity constraints for all $i = 1,...,I$ and $t=1,...,\tau$ would significantly increase the dimension of this problem, and our numerical experience showed that they do not improve much the current approximation when enough basis functions and rows have already been generated so there is already enough information about the structure of the value function. Hence H-2PIAlg generates monotonicity
constraints \eqref{constraint_extra} only for $K=1$. We denote
by $\mathcal{B}_t^\mu$ for all $t$ the subsets of generated constraint indices.


To generate monotonicity constraints, the right-hand side of \eqref{constraint_extra} is minimized for a particular $i$ and $t$. This subproblem is a nonlinear integer problem, solved with local solvers and imposing time limits. H-2PIAlg also uses solutions obtained by solving subproblem $t$ and $i$ to generate rows at other periods of time and other legs. \textcolor{black}{A pseudocode can be found in the Appendix.} The master program of this modified Row Generation algorithm is \eqref{AP}+\eqref{constraint_extra}, thus when finding a new basis function 
with H-2PIAlg one needs to maximize flow imbalances taking into account dual variables associated with monotonicity constraints \eqref{constraint_extra}. \textcolor{black}{The dual formulation of the tightened LP giving rise to {\it adjusted} flow-imbalance functions $\ell_t^\mu(\cdot)$. These formulations can be found in the Appendix.}

\color{black}

}


\section{Choosing the Set of Basis Functions \label{sec:basis_choice}}

In this section, we explore the selection of basis functions. Section \ref{sec:basis_properties} outlines the properties we consider desirable for $\phi(\cdot)$. \textcolor{black}{ Section \ref{sec:hat_vs_exponential} introduces piecewise linear and exponential ridge basis functions and provides a numerical comparison.}

\subsection{Desirable Properties for Basis Functions \label{sec:basis_properties}}

Although the algorithms introduced \textcolor{black}{so far can be used for} any choice of \(\phi(\cdot)\), in this paper we recommend using basis functions that 
\begin{itemize}
    \item[(a)] Have the potential to fit the value function arbitrarily closely, 
    \item[(b)] Are smooth and/or tractable to solve in $\vx$ and $\vbeta$, and \item[(c)] Exhibit structural properties akin to the real optimal value function and/or good interpolation properties. 
\end{itemize}
The rationale behind (a) is to support our goal of providing an accurate approximation of the value function. Specifically, our goal is to select functions that ensure that parameters of our approximation can lead to a small difference between the value function and the approximation for large \(K\). \textcolor{black}{To state this objective more precisely, we want the following:} \(\forall \epsilon>0, \ \exists K\in\mathbb{N}, \ \xi_{t},\ V_{t,k}\), and \(\beta_{i,k}\in\mathbb{R}\) such that \(|\psi_t(\vx) +\xi_t -\sum_{k=1}^K V_{t,k} \phi(\vx;\vbeta_{k})-v_t(\vx)|<\epsilon\) \(\forall \vx\). To achieve this, it suffices to choose basis functions from a set of functions that is fundamental in the space of continuous functions \(C(\mathbb{R}^I)\), i.e., with a linear span that is dense in \(C(\mathbb{R}^I)\). Intuitively, \textcolor{black}{this should ensure a desirable} behavior of our two-phase heuristic (see Proposition \ref{lemma:column_generation}).

Property (b) aligns with previous discussions about how the choice of \(\phi(\cdot)\) impacts both the speed of row generation and the overall quality of the approximation. The subproblems solved to generate rows are integer programs in \(\vx\) and \(\vu\). As the number of basis functions \(K\) increases, the complexity of these subproblems also increases, potentially slowing down the algorithm. Therefore, it is essential to have basis functions that can be easily modeled and solved in \(\vx\). Similarly, the choice of baseline approximation \(\psi_t(\cdot)\) should provide a good starting point while having a manageable shape in \(\vx\). Finally, H-2PIAlg must solve a low-dimensional continuous problem to generate a \(\vbeta\) that violates the flow-balance constraints. Thus, having basis functions that are also easy to model and solve in \(\vbeta\) is crucial.

Imposing (c) can reduce the number of basis functions needed by the algorithm. Intuitively, the more state values a single basis function can closely fit, the fewer basis functions will be required to meet the user's accuracy threshold. Additionally, imposing (c) is beneficial if structural properties of the real value function are known. In our NRM problem, in particular, the real value function is known to be monotonically increasing and asymptotically concave in  \(\vx\) \citep{talluriryzin1998}.  By selecting convex and decreasing \(\phi(\cdot)\) in \(\vx\), we could easily impose the approximation to be concave and increasing by requiring \(V_{t,k}>0\) for all \(t\) and \(k\). This may allow our approximation to capture the true structure of the value function, potentially providing well-behaved policies even for a small value of \(K\).

\subsection{Piece-Wise Linear Hat versus Exponential Ridge Basis functions \label{sec:hat_vs_exponential}}
\textcolor{black}{A function $f: \mathcal{X} \to \mathbb{R}$ is called a ridge function if it has the form $f(\vx)= g(\vbeta^\top \vx)$, where $g$ is a function in $\mathbb{R}$ and $\vbeta$ is a fixed vector \citep{ridge}.} Ridge functions form a fundamental set in $C(\mathbb{R}^I)$ (see Theorem 4 in \cite{ridge}) and hence naturally fulfill (a). 
The following result shows that as long as $g$ is a convex function, the ridge function is convex, ensuring Property (c).
\begin{proposition}\label{lemma:ridge}
\textcolor{black}{Let  $\phi(\vx;\vbeta)=g(\vbeta^\top \vx)$  be a ridge function with direction $\vbeta$ and $g(\cdot)$ convex. Then, $\phi(\cdot;\vbeta)$ is convex.} 
\end{proposition}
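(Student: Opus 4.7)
The plan is to show directly from the definition that $\phi(\cdot;\vbeta)$ satisfies the standard midpoint (or more generally, convex combination) inequality. The proof essentially reduces to observing that $\phi(\cdot;\vbeta)$ is the composition of the convex scalar function $g$ with the linear (hence affine) map $\vx \mapsto \vbeta^\top \vx$, and that convexity is preserved under composition with affine maps.

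Concretely, I would fix $\vx_1, \vx_2 \in \mathbb{R}^I$ and $\alpha \in [0,1]$, and evaluate
$$\phi(\alpha \vx_1 + (1-\alpha)\vx_2;\vbeta) = g\bigl(\vbeta^\top (\alpha \vx_1 + (1-\alpha)\vx_2)\bigr).$$
Using linearity of the inner product, the argument of $g$ rewrites as $\alpha (\vbeta^\top \vx_1) + (1-\alpha)(\vbeta^\top \vx_2)$. Setting $s_1 := \vbeta^\top \vx_1$ and $s_2 := \vbeta^\top \vx_2$, both scalars, the convexity of $g$ on $\mathbb{R}$ yields
$$g\bigl(\alpha s_1 + (1-\alpha) s_2\bigr) \le \alpha g(s_1) + (1-\alpha) g(s_2) = \alpha \phi(\vx_1;\vbeta) + (1-\alpha)\phi(\vx_2;\vbeta),$$
which is precisely the convexity inequality for $\phi(\cdot;\vbeta)$.

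There is no real obstacle here; the statement is a textbook consequence of the fact that the composition of a convex function with an affine map is convex. The only item worth being careful about is stating the domain of $g$ explicitly (it must contain the image of $\vx \mapsto \vbeta^\top \vx$, which for $\vbeta \in \mathbb{R}^I$ is all of $\mathbb{R}$, so $g$ convex on $\mathbb{R}$ suffices). I would therefore keep the proof to two or three lines, making sure to invoke only (i) linearity of $\vx \mapsto \vbeta^\top \vx$ and (ii) the definition of convexity of $g$.
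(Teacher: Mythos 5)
Your proof is correct and is essentially identical to the paper's: both fix a convex combination of two points, use linearity of $\vx \mapsto \vbeta^\top \vx$ to move the combination inside $g$, and then apply the convexity of $g$ to conclude. The only cosmetic difference is your explicit introduction of the scalars $s_1, s_2$ and the remark about the domain of $g$, which the paper leaves implicit.
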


To ensure stable estimation and robust numerical performance, it is often advantageous to constrain or penalize $\|\vbeta\|$. Theorem 5 in \cite{ridge} confirms that we can add this type of constraints while still preserving Property (a). \textcolor{black}{In the Appendix, we show that norm-constrained convex ridge basis functions give rise to DC optimization problems.}

\textcolor{black}{Inspired by \cite{adelmanklabjan2012}, one might consider using continuous piecewise linear hat functions in approximation \eqref{NLA}, but they fail to meet Conditions (b) and (c). Specifically, hat functions are nondifferentiable and encoded via MINLP, potentially making efficient row generation less tractable. As noted in \cite{adelmanklabjan2012}, the flow-balance problem \eqref{Wflowimbalance} is a hard nonlinear integer problem, complicating the generation of new basis functions. Furthermore, \citet{adelmanklabjan2007} discuss the challenges of enforcing convexity over the entire domain when using these functions. Lastly, since each hat function is composed of two linear segments, they may be able fit a very limited number of points closely. Given the stochastic nature of the NRM problem, one would expect to need almost as many functions as there are states in the state space.}

\textcolor{black}{The set of exponential ridge functions $\mathcal{E}:=\{e^{-\sum_{i=1}^I \beta_{i,k}x_i} : \ \vbeta_k\in\mathbb{R}^I, \|\vbeta_k\|\le 1, k=1,\dots,K \}$ is another set of fundamental functions. The corresponding basis functions}
\begin{equation}
\phi(\vx;\vbeta_{k})=e^{-\sum_{i=1}^I \beta_{i,k}x_i}\text{ for all }k=1,\dots,K.
\label{exponential_basis}
\end{equation} 
\textcolor{black}{are differentiable, monotone decreasing, and convex. If all weights in \eqref{NLA} are chosen positive, the value function approximation will hence be monotone increasing and concave.} 
\textcolor{black}{Furthermore, basis \textcolor{black}{functions} of the form \eqref{exponential_basis} are smooth, easy to code and solve. The flexibility of these functions may also allow to closely fit a larger number of points at once, possibly reducing the number of basis functions needed. Summarizing, exponential ridge basis functions fulfill Conditions (a), (b) and (c). }

For numerical purposes, we restrict ourselves to  norm-constrained exponential ridge-basis functions with 
\begin{align}
&||\vbeta_k||=\sum_i c_i |\beta_i| = 1,\label{Wnorm1} 
\end{align}
where $c_i$ is the capacity of leg $i$.
The following proposition shows that adding the norm constraint \eqref{Wnorm1} allows us to bound the basis function away from zero and away from large values. This might avoid numerical issues in practice, further ensuring Property (b).
\begin{proposition} \label{prop:bounds}
	If $\vbeta_k$ is constrained by \eqref{Wnorm1}, then $\phi(\vx;\vbeta_k) \in [\frac{1}{e},e]$
\end{proposition}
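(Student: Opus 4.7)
The plan is straightforward: exploit the constraint $\sum_i c_i |\beta_i| = 1$ together with the fact that any feasible state $\vx \in \mathcal{X}_t$ satisfies $0 \le x_i \le c_i$ for every $i$, and then apply monotonicity of the exponential.

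First, I would bound the exponent. Since $x_i \ge 0$ and $|\beta_i| \ge 0$, the triangle inequality gives
\[
\left| \sum_{i=1}^I \beta_i x_i \right| \le \sum_{i=1}^I |\beta_i| \, x_i.
\]
Then, using $x_i \le c_i$, I would replace each $x_i$ by $c_i$ to obtain
\[
\sum_{i=1}^I |\beta_i|\, x_i \le \sum_{i=1}^I |\beta_i|\, c_i = 1,
\]
where the last equality is exactly the norm constraint \eqref{Wnorm1}. Combining both displays yields $-1 \le \sum_i \beta_i x_i \le 1$, hence $-1 \le -\sum_i \beta_i x_i \le 1$.

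Finally, applying the monotone increasing function $e^{(\cdot)}$ on both sides gives
\[
\frac{1}{e} = e^{-1} \le e^{-\sum_i \beta_i x_i} = \phi(\vx;\vbeta_k) \le e^{1} = e,
\]
which is the claim. There is no real obstacle: the proof amounts to the triangle inequality followed by the capacity bound $x_i \le c_i$ (which is the only place the weights $c_i$ in the norm enter) and monotonicity of the exponential. The one subtlety worth mentioning is that the weighted norm $\sum_i c_i|\beta_i|$ is tailored precisely so that the inequality $\sum_i |\beta_i| x_i \le \sum_i |\beta_i| c_i$ saturates at $1$; a different norm would change the resulting bounds.
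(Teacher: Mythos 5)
Your proof is correct, and it rests on exactly the same core inequality as the paper's: $\sum_i |\beta_i|\,x_i \le \sum_i |\beta_i|\,c_i = 1$, using $0 \le x_i \le c_i$ and the norm constraint \eqref{Wnorm1}, followed by monotonicity of the exponential. The difference is purely one of packaging. The paper frames the claim as an extremal problem: it minimizes $e^{-\sum_i \beta_{i,k} x_i}$ jointly over $\vbeta_k$ and $\vx$, reduces this via monotonicity of the exponential to maximizing $\sum_i \beta_{i,k} x_i$, then relaxes to the case $\beta_{i,k} \ge 0$ (arguing that the objective is increasing in $x_i$ when $\beta_{i,k}\ge 0$ and decreasing otherwise) before applying the same bound $\sum_i \beta_{i,k} x_i \le \sum_i \beta_{i,k} c_i = 1$; the upper bound $e$ is then handled symmetrically ("similarly"). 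Your version collapses all of this into a single triangle-inequality chain $\bigl|\sum_i \beta_i x_i\bigr| \le \sum_i |\beta_i| x_i \le 1$, which handles both ends of the interval $[\frac{1}{e}, e]$ at once and avoids the optimization detour and sign-splitting entirely. Your argument is the more economical of the two; the paper's optimization framing makes explicit that the bound is tight in the extremal sense (attained at $\vx$ with $x_i = c_i$ on the support of a nonnegative $\vbeta$), which your pointwise argument also implies but does not emphasize.
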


Instead of solving (\ref{AP}), the Row Generation Algorithm can also be used to solve (\ref{AP})+\eqref{Wnorm1} by adding \eqref{Wnorm1} to the master problem. In addition, the resulting minimum objective value of the tightened problem (\ref{AP})+\eqref{Wnorm1} provides an upper bound on $v_1(\vc)$.


\textcolor{black}{To numerically compare the performance of H-2PIAlg with hat and exponential ridge basis functions, we use a toy example } We consider the standalone version of the algorithm; i.e., \(\psi_t(\vx)=0\). Although NLIAlg provides similar results, it is less tractable, so its results are relegated to the Appendix.
\color{black}
 We consider a small network with two flights, Flight 1 connecting cities $A$ and $B$ and Flight 2 connecting $B$ and $C$. Customers can travel from $A$ to $C$ by taking both flights. Hence, there are three itineraries $\{AB, BC, AC\}$ each offered at a low and high fare, generating six products in total. There are 10 periods of time and the demand process is assumed to be stationary, i.e. $p_{t,j}=p_j$ for all $j$. Fares and arrival probabilities for all products are specified in \textcolor{black}{the Appendix.}

%


After generating 60 time-dependent hat basis functions (6 per period of time), H-2PIAlg struggled to generate new rows. H-2PIAlg ran for more than 12 hours without converging, whereas the algorithm converged in 28 seconds after generating only 10 exponential ridge basis functions. \textcolor{black}{More specifically, H-2PIAlg achieved an average revenue of 397.2 when simulating policy (\ref{policy}), which is very close to the optimal value function \(v_1^*(\vc) \approx 397.507\), computed using value iteration.}  As a result, we selected exponential ridge basis functions for subsequent numerical experiments. \textcolor{black}{More details on the performance of H-2PIAlg on this toy example can be found in the Appendix. There we also provide graphical illustration showing that the more exponential ridge basis functions H-2PIAlg adds, the better the approximated value function fits the optimal value function.}

%

 \section{Computational study \label{sec:experiments}}

The objective of this section is to assess the performance of H-2PIAlg against strong benchmark methods in NRM. Section \ref{sec:setup} details the computational specifications for H-2PIAlg, while Section \ref{sec:methods} describes the methods we compare. Section \ref{sec:datasets} provides an overview of the datasets used in our experiments, and Section \ref{sec:performance} examines the performance of H-2PIAlg. 

\subsection{Computational Specifications for H-2PIAlg \label{sec:setup}}

To implement H-2PIAlg we need to specify \textcolor{black}{the following six inputs:}

\paragraph{\textcolor{black}{\it 1. The baseline approximation $\psi_t(\cdot)$:}} When choosing the baseline approximation, recall that H-2PIAlg has two modes: the {\it standalone mode} and the {\it add-on mode}.
To run the add-on mode, we first have to specify a baseline approximation and determine its parameters. We choose the Affine Approximation 
\begin{equation} \tag{AA} \label{AA}
\psi_t(\vec x)=\theta_t+\sum_i^I W_{t,i} x_i
\end{equation} 
as proposed in \cite{adelman2007}. This baseline approximation adds little computational burden (see \cite{topaloglu}). In addition, 
 this choice of $\psi_t(\vx)$ is linear in $\vx$, so this basis function fulfills Property (b) (see Section \ref{sec:basis_choice}), allowing an efficient generation of rows. 
 Additionally, it provides a reasonable approximation of the real value function, addressing Property (c).
 Since the standalone version is not tractable for larger instances, we only report the results for the add-on mode here. Appendix \ref{sec:compare_baselines} compares the convergence of the standalone and add-on modes for smaller hub-and-spoke problem instances.

\paragraph{\textcolor{black}{\it 2. The class of basis functions $\phi(\cdot)$:}} As discussed in Section \ref{sec:basis_choice}, we select $\phi(\cdot)$ from the set $\mathcal{E}$ of exponential ridge basis functions.  \textcolor{black}{This choice allows H-2PIAlg to exploit nonlinearities and resource interactions.}

 \paragraph{\it 3. The initial value of $\vbeta_{1}$:} To fulfill constraint \eqref{Wnorm1}, we let $\beta_{i,1}=1/(c_iI)$ for all $i$.

 \paragraph{\it 4.The initial subset of constraints $\mathcal{B}=\left\{\mathcal{B}_t^\lambda\right\}_{t=1,\ldots \tau}$:}The initial subsets of constraints are $\mathcal{B}_1^\lambda =\{(\vc,\vzero) \}$, $\mathcal{B}_t^\lambda =\{(\vc,\vzero),(\vzero,\vzero) \}$ for all $t=2,\dots,\tau$. 
    
\paragraph{\it 5. The stopping criterion $\mathcal{C}_R$ for the Row Generation Algorithm:}Our stopping criterion $\mathcal{C}_R$ is based on a bound for the optimality gap. To specify this, let $Z_\phi$ denote the optimal objective value of \textcolor{black}{{\it linearized} \eqref{AP}. Similarly, let $Z_{\mathcal{B}}$  denote the minimum objective value of the tightened problem \eqref{AP}+\eqref{constraint_extra} considering the reduced set of constraints $\mathcal{B}$.} Let $\pi_{\mathcal{B},t}^*$ be the minus the objective value of \eqref{subproblem_t_primal} or \eqref{subproblem_Tau_primal} when evaluated at their global optima.
Using Proposition 3 in \cite{adelman2007}, we obtain the bound ${Z_\phi} \leq Z_{\mathcal{B}}+ \sum_t \pi_{\mathcal{B},t}^*:=Z^*_{\phi}$. For a given value of $\Omega^{gap}>0$, stopping the Row Generation Algorithm when  $ \sum_t \pi_{\mathcal{B},t}^*\leq \Omega^{gap} Z_{\mathcal{B}} $
ensures that the estimate $Z^*_{\phi}$ is less than $\Omega^{gap}\%$ larger than the true value ${Z_\phi}$.

We have defined $\pi_{\mathcal{B},t}^*$ as minus the (globally) optimal solutions of subproblems \eqref{subproblem_t_primal} and \eqref{subproblem_Tau_primal}. If only locally optimal reduced costs $\hat{\pi}_{\mathcal B,t}$  are available, an alternative stopping criterion $\mathcal{C}_R$ is to stop generating additional rows when $ \sum_t \hat{\pi}_{\mathcal{B},t} \leq \Omega^{gap} Z_{\mathcal{B}}$. Since $\hat{\pi}_{\mathcal{B},t} \leq \pi^*_{\mathcal B,t}$, we expect an algorithm with stopping criterion $\mathcal{C}_R$ based on $\hat{\pi}_{\mathcal{B},t}$ to run for shorter times.
However, note that $\hat{Z}_{\phi}:=Z_\mathcal{B}+\sum_t \hat{\pi}_{\mathcal{B},t}$ is not an upper bound for ${Z}_\phi$. 

If the stopping criterion should be based on an upper bound $\overline{Z}_\phi\geq Z^*_\phi$, the dual bounds $\overline{\pi}_{\mathcal{B},t} \geq \pi_{\mathcal{B},t}^*$  given by {\tt Knitro} when solving subproblem $t$ can be used. Since H-2PIAlg might not solve all the subproblems for each period of time when generating rows (see Section \ref{sec:heuristics}), one would have to enforce the solution of subproblems for all time periods. However, such a stopping criterion would  make H-2PIAlg intractable for large instances. Therefore, we calculate the upper bound only after the algorithm terminates according to a stopping criterion based on $\hat{Z}_{\phi}$.

\paragraph{\it 6. The stopping criterion $\mathcal{C}_K$ for adding new basis functions:} Recall that every solution of the master problem with the reduced set of constraints $\mathcal{B}$ provides a policy as specified in \eqref{policy}. For given values $\Omega^{policy}$ and $\Omega^{p-gap}$, we simulate instances of this policy until the relative standard error $S_e$ of the average policy revenue $\overline{R}$ is smaller than a given value $\Omega^{policy}$; i.e., until $S_e/\overline{R} \leq
\Omega^{policy}$. We then stop generating additional basis functions and terminate the incremental algorithms if this average policy revenue is within $\Omega^{p-gap}\%$ from $\hat{Z}_\phi$; i.e., we stop adding basis functions if $\left |1-\overline{R}/\hat{Z}_{\phi} \right |<\Omega^{p-gap}$. Since $Z_{\phi}<\hat{Z}_{\phi}$, this criterion provides an optimality gap guarantee on the policy.  In practice the user may decide not to wait until $\mathcal{C}_K$ is fulfilled, but to stop earlier if a reasonably-good improvement (either in terms of estimated bound or simulated policy) is achieved. In our numerical experiments, we terminate the H-2PIAlg when either $\mathcal{C}_K$ is met or when a CPU time of 168h is reached. More computational details can be found in Appendix \ref{appendix_comp}


\subsection{Methods Under Comparison \label{sec:methods}}

The numerical experiments compare four methods. The {\it Affine Approximation (AA)} is used to evaluate the extent to which the proposed algorithm enhances the quality of an initial approximation. The {\it Separable Piecewise Linear Approximation (SPLA)} serves as a strong benchmark, as it is known for producing small optimality gaps. It can be computed via a linear program \citep{zhangvossen2015}. The {\it Non-Separable Approximation} (NSEP)\footnote{To run NSEP, we used the codes available in the Github repository \url{https://github.com/slaume/ Reductions-of-Non-Separable-ALPs-for-NRM}. We chose what seemed the “best” performer in the hub-and-spoke instances of \cite{Simon}. That is to say, in the Github code we set {\tt task = 2}, i.e., we determined the partition for which the network measure is largest, which can then be used for NSEP4(I/q,q,gl.).  As recommended by the authors, we then saved the partition set {\tt Icaln} and reset the rest of the environment before running the code with {\tt task=1}. } lets us evaluate how much H-2PIAlg can improve upon an approximation with nonlinearities and resource interactions. Finally, {\it H-2PIAlg} with the specifications discussed in Section \ref{sec:setup}.

\subsection{Instances, Code, and Hardware \label{sec:datasets}}
We consider two types of network instances: the experimental setup specified in \cite{adelman2007} and the bus networks considered in \cite{Simon}. The Hub-and-Spoke (H\&S) instances of \cite{adelman2007} include up to 20 non-hub locations and 2 fares, resulting in up to $40$ single-leg itineraries and up to $380$ two-leg itineraries. Time horizons of up to 1,000 periods are considered. The capacities, \textcolor{black}{specified in Section B of the  Online Appendix}, are chosen so that the load factor is roughly 1.6.

In the bus line examples provided in \cite{Simon}, there are three types of instances: the Simple Bus Line (SBL), which consists of $I$ consecutive legs; the Consecutive Bus Lines (CBL), which combines multiple simple networks; and the Realistic Bus Line (RBL), a real-world example from a large European bus company. More details can be found in \cite{Simon}, and the data for these instances are available on {\it Github}.

\color{black}
All experiments were conducted on the Mercury high-performance computing cluster at The University of Chicago Booth School of Business, with 50 GB of memory and 1 core utilized. The code is available in GitHub.
\color{black}


\subsection{Numerical Results and Discussion \label{sec:performance}}

In this section, we compare the performance of the methods outlined in Section \ref{sec:methods} in terms of Upper Bounds \textcolor{black}{(UBs), Lower Bounds (LBs) corresponding to the average revenue when simulating policy \eqref{policy}, and computational times}. For H-2PIAlg, we simulate the policy and calculate its average revenue for each $K$. Although there is a tendency towards improvement with the increment of $K$, the policy does not necessarily improve monotonically in $K$. As a consequence, we report the highest average revenue found in that process. In other words, it might be the case that H-2PIAlg stopped at $K$, and there was a $K'<K$ such that $\overline{R}^{(K')}>\overline{R}^{(K)}$, where $\overline{R}^{(k)}$ is the average reward of the simulated policy for $k$ basis functions. So the LB corresponds to the {\it best policy found}, which is $\max_{k \in \{1,...,K\}} \overline{R}^{(k)}$. \textcolor{black}{Table \ref{table:SPLA_comparison} display UBs and LBs for the H\&S and bus instances. More details about CPU times and the number of basis functions added by H-2PIAlg can be found in Section C of the Appendix.}




\begin{table}[h!]																	
\centering																	
\begin{small}																	
\begin{tabular}{ @{} *{3}{S[table-format=3.0]} | *{4}{S[table-format=5.2]}  | *{4}{S[table-format=5.2]} @{}}														
\multicolumn{3}{c|}{}	&			\multicolumn{4}{c|}{ Upper bound (UB)}	&			\multicolumn{4}{c}{Policy (LB)}			\\
	&		&		&	{ SPLA}	& {NSEP} &{ AA}	&	{ H-2PIAlg}	&	{ SPLA}	&  {NSEP}	& { AA}	&	{ H-2PIAlg}	\\
\cline{4-11}	
{ $L$}	&	{ $\tau$}	&	 {$c$} & \multicolumn{8}{c}{\bf Hub-and-Spoke instances (H\&S)}\\
\hline\hline
{\multirow{6}{*}{2}}	&	20	&	3	&	821.71	&	 \BB 810.87	&	874.44	&	817.33	&	809.18	&	809.99	&	773.61	&	\BB 810.01	\\
	&	50	&	8	&	2269.7	&	\BB 2250.54	&	2832.01	&	2292.21	&	2244.20	&	\BB 2249.15	&	1701.84	&	2244.70	\\
	&	100	&	17	&	4781.13	&	\BB 4744.31	&	5623.28	&	4866.92	&	4738.27	&	\BB 4745.98{$^*$}	&	3642.22	&	4723.35	\\
	&	200	&	33	&	9707.19	&	$^\dagger$  	&	11269.31	&	9882.71	&	\BB 9645.95	&	$^\dagger$  	&	7137.15	&	9461.31	\\
	&	500	&	83	&	24581.25	&	$^\dagger$  	&	28083.08	&	25034.2	&	\BB 24469.78	&	$^\dagger$  	&	17712.68	&	23869.81	\\
	&	1000	&	165	&	49251.79	&	$^\dagger$  	&	56020.68	&	49785.31	&	\BB 49182.99	&	$^\dagger$  	&	35142.63	&	48147.09	\\
\hline														{\multirow{6}{*}{3}}	&	20	&	2	&	685.75	&	673.83	&	741.13	&	\BB 668.08	&	659.58	&	660.45	&	641.16	&	\BB 661.43	\\
	&	50	&	6	&	2063.02	&	\BB 2043.13	&	2634.23	&	2090.53	&	2016.68	&	\BB 2018.28	&	1561.28	&	2012.33	\\
	&	100	&	12	&	 4327.88	&	\BB 4296.01 	&	5291.44	&	4432.55	&	 4262.21	&	\BB 4266.14 	&	3219.88	&	4206.24	\\
	&	200	&	25	&	\BB 8977.32	&	$^\dagger$  	&	10569.67	&	9101.56	&	\BB 8870.66	&	$^\dagger$  	&	6718.57	&	8696.39	\\
	&	500	&	62	&	\BB 22802.57	&	$^\dagger$  	&	26500.14	&	23013.13	&	\BB 22628.00	&	$^\dagger$  	&	16733.62	&	22173.58	\\
	&	1000	&	124	&	\BB 45751.93	&	$^\dagger$  	&	52911.9	&	46123.38	&	\BB 45494.00	&	$^\dagger$  	&	33344	&	43680.25	\\
\hline													
{\multirow{6}{*}{5}}	&	20	&	2	&	755.71	&	\BB 737.56	&	824.34	&	750.01	&	711.5	&	713.49	&	687.57	&	\BB 713.62	\\
	&	50	&	4	&	1909.38	&	\BB 1881.68	&	2562.17	&	1966.49	&	1831.61	&	\BB 1837.25	&	1428.88	&	1816.37	\\
	&	100	&	8	&	4043.79	&	\BB 4008.09	&	5122.63	&	4183.00	&	3935.72	&	\BB 3941.74	&	3003.64	&	3802.08	\\
	&	200	&	17	&	\BB 8502.53	&	$^\dagger$  	&	10213.77	&	8716.87	&	\BB 8356.41	&	$^\dagger$  	&	6487.64	&	7910.35	\\
	&	500	&	41	&	\BB 21515.49	&	$^\dagger$  	&	25599.28	&	21961.94	&	\BB 21287.37	&	$^\dagger$  	&	15978	&	20510.82	\\
	&	1000	&	83	&	$^\dagger$	&	$^\dagger$  	&	51130.43	&	\BB 44258.58	&	$^\dagger$	&	$^\dagger$  	&	32310.06	&	\BB 40992.70	\\
\hline																					
{\multirow{6}{*}{10}}	&	20	&	1	&	624.95	&	$^\dagger$  	&	624.95	&	\BB 600.10	&	547.92	&	$^\dagger$  	&	547.64	&	\BB 550.60	\\
	&	50	&	2	&	\BB 1686.68	&	$^\dagger$  	&	2527.9	&	1885.22	&	\BB 1544.59	&	$^\dagger$  	&	1210.54	&	1490.57	\\
	&	100	&	5	&	\BB 4071.60	&	$^\dagger$  	&	5149.62	&	4368.73	&	\BB 3876.38	&	$^\dagger$  	&	3151.55	&	3640.82	\\
	&	200	&	9	&	\BB 8252.27	&	$^\dagger$  	&	10320.05	&	8741.37	&	\BB 7973.12	&	$^\dagger$  	&	6113.26	&	7338.47	\\
	&	500	&	23	&	\BB 21613.57	&	$^\dagger$  	&	25814.11	&	22598.56	&	\BB 21193.99	&	$^\dagger$  	&	16184.17	&	19559.56	\\
	&	1000	&	45	&	$^\dagger$	&	$^\dagger$  	&	51629.25	&	\BB 45096.76	&	$^\dagger$	&	$^\dagger$  	&	32070.7	&	\BB 39633.85	\\
\hline																					
{\multirow{6}{*}{20}}	&	20	&	1	&	734.14	&	$^\dagger$  	&	734.31	&	\BB 732.18	&	632.64	&		&	632.25	&	\BB 636.03	\\
	&	50	&	2	&	\BB 1391.22	&	$^\dagger$  	&	2255.3	&	1555.55	&	\BB 1178.99	&	$^\dagger$  	&	986.47	&	1158.22	\\
	&	100	&	2	&	\BB 3241.21	&	$^\dagger$  	&	5075.13	&	3646.09	&	\BB 2919.48	&	$^\dagger$ 	&	2182.02	&	2917.04	\\
	&	200	&	5	&	\BB 7875.53	&	$^\dagger$  	&	10147.54	&	8627.74	&	\BB 7436.05	&	$^\dagger$  	&	5883.32	&	6884.33	\\
	&	500	&	12	&	$^\dagger$	&	$^\dagger$  	&	25357.61	&	\BB 22111.36	&	$^\dagger$	&	$^\dagger$  	&	15266.23	&	\BB 18286.55	\\
	&	1000	&	24	&	$^\dagger$	&	$^\dagger$  	&	50722.76	&	\BB 44696.62	&	$^\dagger$	&	$^\dagger$  	&	31380.98	&	\BB 38004.44	\\
\hline
\multicolumn{11}{c}{\bf Bus instances}\\
\hline
\hline
\multicolumn{3}{r|}{Simple (SBL)}					&	30.041	&	\BB 28.47	&	30.86	&	29.95	&	27.62	&	27.62	&	26.93	&	\BB 27.68	\\
\multicolumn{3}{r|}{Consecutive (CBL)}					&	11.179	&	\BB 10.91	&	11.83	&	11.01	&	10.59	&	10.61	&	10.60	&	\BB 10.63	\\
\multicolumn{3}{r|}{Realistic (RBL)}					&	\BB 683.86	&	$^\dagger$  	&	700.85	&	699.27	&	\BB 682.44	&	$^\dagger$  	&	635.82	&	652.91	\\
\end{tabular}																	
\end{small}																	
\caption{\small Performance of SPLA, NSEP, AA, and  H-2PIAlg with AA baseline for the hub-and-spoke and bus networks.  Columns show upper bounds and average reward of simulated policy; best performer has been highlighted in bold. Instances marked with $^\dagger$ ran out of memory when solving SPLA or NSEP. \textcolor{black}{The estimated average revenue marked with $^*$ exceeds the UB by 1.67 in absolute value. However, given the estimator's standard error of 4.7, this is attributed to variability.}  \label{table:SPLA_comparison}}	
\end{table}

\color{black}
AA provides a tractable baseline approximation, with the smallest instances being solved in mere seconds and the largest instance taking less than 12 hours. We observe relative optimality gaps (UB vs. LB) below 20\% for the smallest instances (H\&S with $L=2$, $\tau=20$, and Bus instances). For the remaining cases, the gap exceeds 54\%. For each $L$, the worst performance is observed for moderate time horizons ($\tau=50,100$), and performance generally deteriorates as $L$ increases. In particular, H\&S scenarios with $L=20$ and $\tau=50,100$ exhibit the worst performance, with gaps exceeding 125\% and thus leaving more room for improvement. In fact, although H-2PIAlg outperforms AA by improving both UBs and LBs across all instances, these latter H\&S instances show the largest UB improvement, exceeding 28\%. Interestingly, the most significant LB improvements occur in H\&S instances with $L=2$ and large time horizons, yielding up to 37\% more average revenue. More generally, for H\&S instances with $\tau\geq 50$, UB and LB improvements remain above 10\% and 15\%, respectively. These improvements, however, come at a computational cost, with CPU times approaching the time limit set in the cluster.

Despite SPLA being a strong NRM benchmark, it fails to solve the largest instances in our numerical experiments due to memory limitations; those instances are marked with $\dagger$.  For instances that do solve, computational times remain reasonable, always below 7253 seconds, and optimality gaps are often relatively small, particularly for H\&S instances with larger time horizons. However, the larger the $L$ and the smaller the $\tau$, the larger the gaps, exceeding 10\% in some cases. H-2PIAlg capitalizes on this room for improvement to enhance SPLA's UB and LB by up to 4\% and 0.54\%, respectively, for instances with short time horizons (SBL, CBL and H\&S instances with $\tau=20$). These improvements come at a computational cost, ranging from 1.8h to 156h. For other instances, no improvement is observed. In fact, for the largest instances that SPLA solves (e.g., $L=20$, $\tau=50,100,200$), SPLA produces a UB more than 9\% better than H-2PIAlg. NSEP also fails to handle larger instances, marked with $\dagger$ in Table \ref{table:SPLA_comparison}. Among the small instances it could solve, NSEP outperformed SPLA, providing UBs and LBs that are nearly tight and close to optimal, with the largest gap being 3.4\%. Despite this, for the smallest instances ($\tau=20$, SBL, and CBL), H-2PIAlg generated better policies, achieving up to a 0.22\% improvement in average revenue. While H-2PIAlg generally requires additional computational effort (e.g., 153 hours more for CBL), its runtime can also be shorter (e.g., 3.4 hours less for $L=2$, $\tau=20$). However, the key advantage of H-2PIAlg over NSEP and SPLA lies in scalability: H-2PIAlg remains computationally feasible for larger instances, providing reasonable UB and LB values where the benchmarks cannot.

\color{black}

\paragraph{\bf Discussion:} \textcolor{black}{Summarizing}, we find that H-2PIAlg is particularly competitive under two conditions: (1)  in large problem instances, when SPLA and NSEP run out of memory, and (2) when capacity is very scarce. The reason \textcolor{black}{behind (1) is its smaller complexity space}. For instance, SPLA has \( T \times \left( I + J + \sum_{i=1}^{I} (c_i + 1) \times (J_i + 1) \right) \) variables and 
$ T \left( \sum_{i=1}^{I} c_i + 3 \sum_{i=1}^{I} J_i + 4 \sum_{i=1}^{I} J_i c_i \right) $ constraints, with \( J_i=\sum_{j=1}^J a_{ij} \). For the largest instance, this yields $41,880,000$ variables and $159,360,000$ constraints. In contrast, H-2PIAlg has a more manageable complexity. Instead of solving one large problem, namely \eqref{AP}, our algorithm solves smaller problems (linear master program, the flow-balance problem \eqref{Wflowimbalance}, and subproblems \eqref{subproblem_t_primal} and \eqref{subproblem_Tau_primal}) multiple times. The linear master program consists of   $\tau(K+1)$ variables and $N$ constraints, where $N$ is iteratively increased through row generation. The number of basis functions, $K$, is relatively small compared to $N$ and is incremented by one each time \eqref{Wflowimbalance} is solved. This problem involves $I$
continuous variables and a single linear constraint. The nonlinear subproblems for row generation feature $I$ integer variables, $J$
binary variables, and 
$IJ$ linear constraints. \textcolor{black}{ Each iteration of the algorithm requires a exponential number of floating-point operations to solve the subproblems in the worst case. Although the dimension of the subproblems remain invariant across iterations, the shape of their objective functions become increasingly complex with $K$, thereby increasing solution times. However, in our numerical experience we observe that the solution times are mainly driven by the LPs, whose sizes increase with $N$.}
 By progressively adding basis functions, H-2PIAlg incrementally increases the complexity of the problem and provides an approximation at every stage. 
\textcolor{black}{If the problem becomes too complex to solve in later steps, solutions from earlier, simpler stages remain valid. As a result, this algorithm requires longer computational times compared to other approaches but allows to approximate significantly larger instances.}

\textcolor{black}{The reason for (2) is that} the value function is more challenging to approximate in these scenarios, where non-separability and non-linearities play a significant role. Since the exponential ridge basis functions are able to exploit nonlinearities and interactions \textcolor{black}{across all} resources, H-2PIAlg adds multiple basis functions so that \eqref{NLA} is able to capture these intricate aspects of the value function. \textcolor{black}{In fact, the shorter the time horizon, the larger the $K$ produced by H-2PIAlg (see Table \ref{table:times} of the Appendix).} This observation aligns with the findings in \cite{Simon}, which indicate that the benefits of non-separability and non-linearity diminish as capacity increases. As also noted in \cite{Simon}, NSEP is particularly effective at exploiting non-separability and non-linearity  to improve upper bounds. However, because NSEP only models \textcolor{black}{these} within a small subnetwork and relies on affine approximations for all other resources,  policies generated by NSEP do not show significant improvement compared to AA. Since H-2PIAlg allows for non-separability and non-linearity across all resources, its ability to enhance policies and obtain higher average revenues is superior.

\section{Concluding Remarks and Extensions \label{sec:conclusion}}

In this paper we proposed a Nonlinear Incremental Algorithm (NLIAlg), which dynamically generates and optimizes basis functions for approximating value functions in NRM. The algorithm progresses through two main steps: (I) Estimate the weights and parameters of the approximation \eqref{NLA} with fixed number of basis functions $K$ using a row generation algorithm, and (II) Increase $K$ until the approximation is deemed satisfactory according to user-specified criteria. This approach allows for iterative improvements, balancing dimensionality and accuracy. In addition, NLIALg stands out as the first algorithm to optimize new basis functions rather than sampling them (see \cite{bhat} and \cite{pakimanetal2019}). 

\textcolor{black}{To address large instances, the Two-Phase Incremental Algorithm (2PIAlg) estimates the parameters in two phases within Step (I).} In Phase (i), the parameters of the basis functions are estimated following the flow imbalance ideas introduced by \citet{adelmanklabjan2012}. In Phase (ii), the remaining parameters of the approximation are estimated by solving the master problem with fixed basis function parameters, which simplifies the problem into an LP. This constitutes the first application of the flow-balance methodology in a stochastic setting.

Both NLIAlg and 2PIAlg can be used either in (1) {\it Standalone mode} to find a suitable value function approximation from scratch, or in (2) {\it Add-on mode}, to enhance a given value function approximation.
Our numerical results show that NLIAlg and 2PIAlg in standalone mode are practical only for toy problems. However, the heuristic version of 2PIAlg (H-2PIAlg) on add-on mode with the Affine Approximation \eqref{AA} and exponential ridge basis functions \eqref{exponential_basis} can solve instances so large that the benchmark NRM methods SPLA \citep{vossenzhang2014} and NSEP \citep{Simon} run out of memory. In addition, in these instances H-2PIAlg provides significantly better policies and upper bounds than AA. Moreover, H-2PIAlg is able to improve the policies of SPLA, NSEP and AA for those instances where capacity is scarce.

A potential future direction is exploring hybrid policies that combine the strengths of SPLA and our proposed method. For instance, using SPLA early in the booking period and switching to our approach later could yield significant benefits. Optimizing this transition point through methods such as line search or advanced policy search approaches could be a valuable extension of this work. Moreover, adapting our methodology to other stochastic dynamic programming problems beyond NRM is a promising avenue for future research. Finally, investigating the theoretical convergence of our approach in a broader Markov decision process context would also be an interesting line of research.



\section*{Appendices}
\appendix

\section{Additional theoretical results and proofs\label{appendix_proofs}}

\subsection{Norm-Constrained Convex Ridge Basis Functions Give Rise to DC Optimization problems\label{sec:basis_ridge} }

Basis functions that are convex or concave in $\vbeta$ and $\vx$ give rise to optimization problems in our algorithms whose objective and constraints are difference-of-convex (DC) functions\footnote{ 
    We call a function $f:\mathbb R^n\to \mathbb R$ a \em{DC function} if it can be expressed as the difference of two convex functions, i.e., if $f(\vec{y})=f_1(\vec{y})-f_2(\vec{y})$, where $f_1,f_2: \mathbb R^n\to \mathbb R$ are convex functions. Moreover, we say an optimization problem $\min\{f(\vec y): g_i(\vec y)\le0, i=1,\dots,m\}$ is a \em{DC program} if $f,g_1,\dots,g_m:\mathbb R^n\mapsto \mathbb R$ are DC functions. 
}. Under such a choice of basis functions, running NLIAlg implies solving a large DC master problem and low-dimensional integer DC problems for row generation. 2PIAlg entails solving a large linear master program, low-dimensional integer DC problems for row generation, and low-dimensional continuous DC problems for basis function generation. For convenience, the following proposition gathers these statements and characterizes the structure of the value function approximation.

\begin{proposition}\label{lemma:DCgeneral}

Let $\phi(\cdot)$ be convex (resp. concave) in $\vbeta$ and $\vx$, and let the baseline approximation $\psi_t(\cdot)$ be concave on $\vx$. Then:

\begin{itemize}
\item[(i)] Problem \eqref{AP} is a DC program on $\xi_t$, $V_{t,k}$ and $\vbeta_k$.
\item[(ii)] The continuous relaxations of subproblems \eqref{subproblem_t_primal} and \eqref{subproblem_Tau_primal} are DC programs in $\vx$ and $\vu$.
\item[(iii)] The flow-balance problem \eqref{Wflowimbalance} is a DC program on $\vbeta$.
\item[(iv)] The continuous relaxations of subproblems \eqref{constraint_extra}, $t=1,...,\tau-1$, that need so be solved to generate monotonicity constraints are DC programs in $\vx$.
\end{itemize}
\end{proposition}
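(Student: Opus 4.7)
The plan is to verify (i)--(iv) by a single uniform appeal to the elementary calculus of DC functions, using the following closure properties on the relevant compact convex domains: (a) every convex or concave function is DC; (b) DC functions form a vector space under finite linear combinations; (c) pointwise finite maxima and minima of DC functions are DC, so in particular $|h|=\max\{h,-h\}$ is DC whenever $h$ is; (d) DC functions are preserved under pre-composition with an affine map; and (e) a product of two bounded DC functions is DC (e.g.\ via Hartman's theorem in the smooth case, or via the identity $fg=\tfrac14[(f+g)^2-(f-g)^2]$ together with the fact that the square of a bounded DC function is DC). With (a)--(e) in hand, each of the four claims reduces to inspecting the structure of the objective and constraints.

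For (i), freeze any constraint of \eqref{AP}: the indexing pair $(\vx,\vu)$ is constant, and the decision variables are $(\xi_t, V_{t,k}, \vbeta_k)$. Every $\psi$--term is then constant, the $\xi$--terms are linear, and every remaining term has the form $V_{t,k}\,\phi(\vx;\vbeta_k)$, i.e.\ a product of a linear function of $V_{t,k}$ with a function of $\vbeta_k$ that is convex (or concave) by hypothesis; (e) makes this jointly DC in $(V_{t,k},\vbeta_k)$, and (b) then promotes each constraint body and the objective $\psi_1(\vc)+\xi_1-\sum_k V_{1,k}\phi(\vc;\vbeta_k)$ to a DC function of the decision variables. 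For (ii), freeze $(\xi,V,\vbeta)$ and let $(\vx,\vu)$ vary in the continuous relaxation: each $\phi(\vx;\vbeta_k)$ and $\phi(\vx-\vec a_j u_j;\vbeta_k)$ is convex (or concave) in $(\vx,\vu)$ by (d), because $\vx-\vec a_j u_j$ is affine in $(\vx,u_j)$; each $\psi_t(\vx)$ and $\psi_{t+1}(\vx-\vec a_j u_j)$ is concave by assumption and (d); these are multiplied by fixed scalars and summed, so (a)--(b) give a DC objective over the polyhedral feasible set. Part (iv) is the same argument applied to $W_{t,i}+\sum_k V_{t,k}\bigl(\phi(\vx;\vbeta_k)-\phi(\vx^{+i};\vbeta_k)\bigr)$, using that $\vx^{+i}$ is an affine shift of $\vx$ and hence that the bracketed difference is a textbook difference of two convex (or concave) functions of $\vx$.

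For (iii), fix the dual multipliers and read $\ell_t(\vbeta)$ as a finite signed linear combination of maps $\vbeta\mapsto\phi(\vec y;\vbeta)$ for various fixed vectors $\vec y$ (namely $\vx$ and $\vx-\vec a_j u_j$); each summand is convex (or concave) in $\vbeta$ by hypothesis, so (a)--(b) give that $\ell_t$ is DC, (c) gives that $|\ell_t|$ is DC, and (b) gives that $\sum_{t=1}^\tau(\tau-t+1)|\ell_t(\vbeta)|$ is DC. Maximising this is equivalent to minimising its negative, which is also DC, so \eqref{Wflowimbalance} is a DC program. The one step that is not pure bookkeeping is the use of (e) in part (i): the bilinear coupling $V_{t,k}\phi(\vx;\vbeta_k)$ is neither convex nor concave jointly in $(V_{t,k},\vbeta_k)$, and representing it as DC relies on closure under products, which in turn requires boundedness of the admissible domain of $\vbeta_k$ (e.g.\ via the norm constraint \eqref{Wnorm1}). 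Everything else follows from the affine structure of the value function approximation and the hypothesised convexity or concavity of $\phi$ and $\psi_t$.
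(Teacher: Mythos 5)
Your proposal is correct and follows essentially the same route as the paper's proof, which simply invokes the DC-calculus closure properties (finite linear combinations and products of DC functions are DC, as is the absolute value of a DC function) to dispatch all four claims at once. Your write-up is just more explicit: it spells out where each closure rule enters, treats part (iv) (which the paper leaves implicit), and flags that the product rule behind the bilinear terms $V_{t,k}\,\phi(\vx;\vbeta_k)$ is the one step requiring care (e.g.\ boundedness of the admissible $\vbeta_k$), a caveat the paper's one-line citation glosses over.
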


\begin{proof}{Proposition \ref{lemma:DCgeneral}:}
Proposition 2.1 in \cite{DC_overview} claims that any finite linear combination and product of DC functions is also DC. According to this statement, (i) and (ii) are fulfilled. In addition, Proposition 2.1 in \cite{DC_overview} also claims that the absolute value of a DC function is still DC, rendering \eqref{Wflowimbalance} a DC program. 
\end{proof}


\subsection{Supporting Lemma for Theorem 1}

The following lemma that is essential for proving Theorem \ref{lemma:column_generation}.

\begin{lemma}\label{lemma:decomposition}
    For any value function approximation $v(\vx)=\xi_t+\psi_t(\vx)+\sum_{k+1}^K V_{t,k}\phi(\vx;\vbeta)$ and any dual variables $\lambda_{t,(\vx,\vu)}\geq 0$ the following decomposition applies

\begin{flalign}
   \Xi(\xi;\lambda)+\Psi(\lambda)+\Phi( V,\hat\vbeta;\lambda)&= \sum_{t=1}^\tau \sum_{(\vx,\vu)\in \mathcal{F}_t} \lambda_{t,(\vx,\vu)} \left\{\hat{v}_t(\vx) - \left\{ \sum_{j=1}^J p_{t,j} \left[f_ju_j+\mathbf{1}_{\{t<\tau\}} \hat{v}_{t+1}(\vx-\vec a_j u_j)\right]\right.\right. \nonumber\\
   &\quad \left. \left. +\left( 1-\sum_{j=1}^J p_{t,j}\right) \mathbf{1}_{\{t<\tau\}}\hat{v}_{t+1}(\vx)\right\} \right\}, \label{eq:decomposition}
\end{flalign}
where

\begin{flalign*}
&\Xi(\xi;\lambda):=-\sum_{t=1}^\tau  \sum_{(\vx,\vu)\in \mathcal{Q}_t(\lambda)} \lambda_{t,(\vx,\vu)} \sum_{j=1}^J p_{t,j} f_ju_j+ \xi_1,  \nonumber\\
&\Psi(\lambda):= \sum_{t=1}^\tau  \sum_{(\vx,\vu)\in \mathcal{Q}_t(\lambda)} \lambda_{t,(\vx,\vu)} \left\{\psi_t(\vx) -  \left[  \sum_{j=1}^J p_{t,j} \mathbf{1}_{\{t<\tau\}} \psi_{t+1}(\vx-\vec a_j u_j) +\left( 1-\sum_{j=1}^J p_{t,j}\right) \mathbf{1}_{\{t<\tau\}}\psi_{t+1}(\vx)\right] \right\}, \nonumber\\
& \Phi(V,\vbeta;\lambda) := -\sum_{k=1}^{K}  V_{1,k}
 \phi(\vec c;\vbeta_k) 
-\sum_{t=2}^{\tau}\sum_{k=1}^{K}  V_{t,k} \ell_t(\vbeta_k;\lambda).
\end{flalign*}
and $\mathcal{Q}_t(\lambda):=\{(\vx,\vu)\in \mathcal{F}_t: \  \lambda_{t,\vx,\vu)}>0\}$.

\end{lemma}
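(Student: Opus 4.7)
\textbf{Proof proposal for Lemma \ref{lemma:decomposition}.} The plan is to substitute the explicit form $\hat v_t(\vx)=\hat\xi_t+\psi_t(\vx)+\sum_k \hat V_{t,k}\phi(\vx;\hat\vbeta_k)$ into the right-hand side of \eqref{eq:decomposition} and distribute $\lambda_{t,(\vx,\vu)}$, splitting the resulting expression into three disjoint groups of terms according to whether they come from the constant $\hat\xi$, from $\psi$, or from $\sum_k V\phi$. Because the Bellman operator in the bracket is linear in $\hat v_{t+1}$, each piece can be handled independently. The $-\sum_j p_{t,j}f_j u_j$ contribution (coming directly from the Bellman expansion) is kept aside and eventually absorbed into $\Xi(\xi;\lambda)$. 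Throughout, I replace $\sum_{(\vx,\vu)\in\mathcal{F}_t}$ by $\sum_{(\vx,\vu)\in\mathcal{Q}_t(\lambda)}$ whenever $\lambda$ appears as a weight, since the two sums coincide by definition of $\mathcal{Q}_t(\lambda)$.

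For the $\hat\xi$ block, the inner bracket collapses (via $\sum_j p_{t,j}+(1-\sum_j p_{t,j})=1$) to $\hat\xi_t-\mathbf{1}_{\{t<\tau\}}\hat\xi_{t+1}$. Summing over $(\vx,\vu)$ and writing $\Lambda_t:=\sum_{(\vx,\vu)\in\mathcal{F}_t}\lambda_{t,(\vx,\vu)}$ yields a telescoping expression $\sum_{t=1}^{\tau}\Lambda_t\hat\xi_t-\sum_{t=1}^{\tau-1}\Lambda_t\hat\xi_{t+1}=\Lambda_1\hat\xi_1+\sum_{t=2}^{\tau}(\Lambda_t-\Lambda_{t-1})\hat\xi_t$. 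Invoking the dual feasibility of $\hat\lambda$ (the first block of constraints in \eqref{D}, namely $\Lambda_1=1$ and $\Lambda_t=\Lambda_{t-1}$ for $t\geq 2$), this reduces to $\hat\xi_1$. Combined with the $-\sum_t\sum_{\mathcal{F}_t}\lambda\sum_j p_{t,j} f_j u_j$ term from the Bellman expansion, this reproduces $\Xi(\xi;\lambda)$ verbatim. The $\psi$ block requires no manipulation: it reproduces $\Psi(\lambda)$ term-by-term upon replacing $\mathcal{F}_t$ by $\mathcal{Q}_t(\lambda)$.

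The main work is the $\phi$ block. After substitution, its contribution is
\begin{equation*}
\sum_{t=1}^{\tau}\sum_{k=1}^{K}\hat V_{t,k}\sum_{(\vx,\vu)\in\mathcal{F}_t}\lambda_{t,(\vx,\vu)}\phi(\vx;\hat\vbeta_k)\;-\;\sum_{t=1}^{\tau-1}\sum_{k=1}^{K}\hat V_{t+1,k}\!\!\sum_{(\vx,\vu)\in\mathcal{F}_t}\!\!\lambda_{t,(\vx,\vu)}\!\Bigl[\sum_j p_{t,j}\phi(\vx-\vec a_j u_j;\hat\vbeta_k)+\bigl(1-\sum_j p_{t,j}\bigr)\phi(\vx;\hat\vbeta_k)\Bigr].
\end{equation*}
The trick is to shift the index of the second sum from $t$ to $t-1$, so that both sums share the outer variable $\hat V_{t,k}$. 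The index-shifted expression becomes $\sum_{t=2}^{\tau}\sum_k \hat V_{t,k}\bigl[\sum_{\mathcal{F}_t}\lambda_t\phi(\vx;\hat\vbeta_k)-\sum_{\mathcal{F}_{t-1}}\lambda_{t-1}(\sum_j p_{t-1,j}\phi(\vx-\vec a_j u_j;\hat\vbeta_k)+(1-\sum_j p_{t-1,j})\phi(\vx;\hat\vbeta_k))\bigr]$, plus a leftover $t=1$ term $\sum_k \hat V_{1,k}\sum_{\mathcal{F}_1}\lambda_{1,(\vx,\vu)}\phi(\vx;\hat\vbeta_k)$. The bracket for $t\geq 2$ is precisely $\ell_t(\hat\vbeta_k;\lambda)$, and the $t=1$ flow-balance constraint from \eqref{D} gives $\sum_{\mathcal{F}_1}\lambda_{1,(\vx,\vu)}\phi(\vx;\hat\vbeta_k)=\phi(\vc;\hat\vbeta_k)$, so the leftover collapses to $\sum_k \hat V_{1,k}\phi(\vc;\hat\vbeta_k)$. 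Matching the signs in the definition of $\Phi(V,\vbeta;\lambda)$ then completes the identification.

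The only delicate step is the index shift and the attendant careful bookkeeping of where the terminal-time indicator $\mathbf{1}_{\{t<\tau\}}$ truncates the sums; everything else is algebraic. Dual feasibility of $\hat\lambda$ enters twice — once to telescope the $\xi$ contribution and once to eliminate the $t=1$ flow-balance term — so the only substantive hypothesis is that $\hat\lambda$ satisfies the first block of dual constraints in \eqref{D}, which is exactly the setting in which Lemma \ref{lemma:decomposition} will be applied in the proof of Theorem \ref{lemma:column_generation}.
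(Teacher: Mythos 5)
Your decomposition follows the same route as the paper's proof: split the right-hand side into the $\xi$, $\psi$, and $\phi$ blocks, telescope the $\xi$ block using mass conservation, shift the time index in the $\phi$ block so that the bracket for $t\geq 2$ is recognized as $\ell_t(\vbeta_k;\lambda)$, and handle the leftover $t=1$ term. You are also right---and more explicit than the lemma's own statement---that the identity cannot hold for arbitrary $\lambda\geq 0$ (the left-hand side contains the unscaled terms $\xi_1$ and $\sum_k V_{1,k}\phi(\vc;\vbeta_k)$, so it does not scale with $\lambda$), hence feasibility with respect to the first block of constraints in \eqref{D} must be assumed; the paper's proof uses this silently when it drops the factors $\sum_{(\vx,\vu)\in\mathcal{Q}_t(\lambda)}\lambda_{t,(\vx,\vu)}$.

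There is, however, one step whose justification does not hold in the generality the lemma requires: you eliminate the leftover term $\sum_k V_{1,k}\sum_{(\vx,\vu)\in\mathcal{F}_1}\lambda_{1,(\vx,\vu)}\phi(\vx;\vbeta_k)$ by citing the $t=1$ flow-balance constraint \eqref{constraint_beta}. Those constraints exist only for the basis functions $\hat{\vbeta}_k$, $k=1,\dots,\hat{K}$, that actually appear in the approximate LP defining the dual. But the lemma is stated for an arbitrary approximation, and in the proof of Theorem \ref{lemma:column_generation} it is applied with the parameters $\vbeta^*_k$ of the exact representation, which are not columns of that dual and for which no flow-balance constraint is available; your argument therefore only proves the special case $\vbeta=\hat{\vbeta}$, which is not the case the theorem needs. (Your closing claim that only the first block of dual constraints is used is in tension with this very step.) The correct argument---the one the paper leaves implicit---is structural: at $t=1$ the feasible state set is the singleton $\mathcal{X}_1=\{\vc\}$, so every $(\vx,\vu)\in\mathcal{F}_1$ has $\vx=\vc$ and the leftover sum factors as $\phi(\vc;\vbeta_k)\sum_{(\vx,\vu)\in\mathcal{F}_1}\lambda_{1,(\vx,\vu)}=\phi(\vc;\vbeta_k)$, using only the first-block constraint $\sum_{(\vx,\vu)\in\mathcal{F}_1}\lambda_{1,(\vx,\vu)}=1$. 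With this one-line replacement your proof is complete, holds for arbitrary $\vbeta$, and then your remark about which dual constraints are substantively needed becomes correct.
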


\begin{proof}{Lemma \ref{lemma:decomposition}:}
Introducing the set  $\mathcal{Q}_t(\hat{\lambda}):=\{(\vx,\vu)\in \mathcal{F}_t: \ \hat \lambda_{t,\vx,\vu)}>0\}$, we can decompose the terms in the right hand side of \eqref{eq:decomposition} as
\begin{flalign}
\Xi(\xi;\lambda)&=\sum_{t=1}^\tau  \sum_{(\vx,\vu)\in \mathcal{Q}_t(\lambda)} \lambda_{t,(\vx,\vu)} \left\{\xi_t - \left\{ \sum_{j=1}^J p_{t,j} \left[f_ju_j+\mathbf{1}_{\{t<\tau\}} \xi_{t+1} \right]+\left( 1-\sum_{j=1}^J p_{t,j}\right) \mathbf{1}_{\{t<\tau\}}\xi_{t+1}\right\} \right\} \nonumber \\
&=-\sum_{t=1}^\tau  \sum_{(\vx,\vu)\in \mathcal{Q}_t(\lambda)} \lambda_{t,(\vx,\vu)} \sum_{j=1}^J p_{t,j} f_ju_j+\sum_{t=1}^\tau  \sum_{(\vx,\vu)\in \mathcal{Q}_t(\lambda)} \lambda_{t,(\vx,\vu)} \left( \xi_t - \mathbf{1}_{\{t<\tau\}} \xi_{t+1}\right) \nonumber\\
&=-\sum_{t=1}^\tau  \sum_{(\vx,\vu)\in \mathcal{Q}_t(\lambda)} \lambda_{t,(\vx,\vu)} \sum_{j=1}^J p_{t,j} f_ju_j+\sum_{t=1}^\tau  \left( \xi_t - \mathbf{1}_{\{t<\tau\}} \xi_{t+1}\right) \nonumber\\
&=-\sum_{t=1}^\tau  \sum_{(\vx,\vu)\in \mathcal{Q}_t(\lambda)} \lambda_{t,(\vx,\vu)} \sum_{j=1}^J p_{t,j} f_ju_j+ \xi_1,  \nonumber\\
\Psi(\lambda)&= \sum_{t=1}^\tau  \sum_{(\vx,\vu)\in \mathcal{Q}_t(\lambda)} \lambda_{t,(\vx,\vu)} \left\{\psi_t(\vx) -  \left[  \sum_{j=1}^J p_{t,j} \mathbf{1}_{\{t<\tau\}} \psi_{t+1}(\vx-\vec a_j u_j) +\left( 1-\sum_{j=1}^J p_{t,j}\right) \mathbf{1}_{\{t<\tau\}}\psi_{t+1}(\vx)\right] \right\}, \nonumber\\
\Phi( V,\vbeta;\lambda) &= \sum_{t=1}^\tau  \sum_{(\vx,\vu)\in \mathcal{Q}_t(\lambda)} \lambda_{t,(\vx,\vu)} \left\{-\sum_{k=1}^{K}V_{t,k}\phi(\vx;\vbeta_k) -\left[ - \sum_{j=1}^J p_{t,j} \mathbf{1}_{\{t<\tau\}} \sum_{k=1}^{K}V_{t+1,k}\phi(\vx-\vec a_j u_j;\vbeta_k)\right. \right.   \nonumber\\
& \left.\left. -\left( 1-\sum_{j=1}^J p_{t,j}\right) \mathbf{1}_{\{t<\tau\}}\sum_{k=1}^{K}V_{t+1,k}\phi(\vx;\vbeta_k)\right]  \right\} \nonumber\\
=& -\sum_{t=1}^{\tau}\sum_{k=1}^{K}  V_{t,k}
\sum_{(\vx,\vu)\in \mathcal{Q}_t(\lambda)} \lambda_{t,(\vx,\vu)} \phi(\vx;\vbeta_k)  \nonumber\\
&+\sum_{t=1}^{\tau-1}\sum_{k=1}^{K}  V_{t+1,k}\sum_{j=1}^J p_{t,j}
\sum_{(\vx,\vu)\in \mathcal{Q}_t(\lambda)} \lambda_{t,(\vx,\vu)} \phi(\vx-\vec a_j u_j;\vbeta_k) \nonumber\\
&+\sum_{t=1}^{\tau-1}\sum_{k=1}^{K}  V_{t+1,k}(1-\sum_{j=1}^J p_{t,j})
\sum_{(\vx,\vu)\in \mathcal{Q}_t(\lambda)} \lambda_{t,(\vx,\vu)} \phi(\vx;\vbeta_k) 
\nonumber\\
=& -\sum_{k=1}^{K}  V_{1,k}
\sum_{(\vx,\vu)\in \mathcal{Q}_1} \lambda_{1,(\vx,\vu)} \phi(\vx;\vbeta_k) 
-\sum_{t=2}^{\tau}\sum_{k=1}^{K^*}  V_{t,k}
\sum_{(\vx,\vu)\in \mathcal{Q}_t(\lambda)} \lambda_{t,(\vx,\vu)} \phi(\vx;\vbeta_k)  \nonumber\\
&+\sum_{t=2}^{\tau}\sum_{k=1}^{K}  V_{t,k}\sum_{j=1}^J p_{t-1,j}
\sum_{(\vx,\vu)\in \mathcal{Q}_{t-1}} \lambda_{t-1,(\vx,\vu)} \phi(\vx-\vec a_j u_j;\vbeta_k) \nonumber\\
&+\sum_{t=2}^{\tau}\sum_{k=1}^{K}  V_{t,k}(1-\sum_{j=1}^J p_{t-1,j})
\sum_{(\vx,\vu)\in \mathcal{Q}_{t-1}} \lambda_{t-1,(\vx,\vu)} \phi(\vx;\vbeta_k) 
\nonumber\\
=& -\sum_{k=1}^{K}  V_{1,k}
\sum_{(\vx,\vu)\in \mathcal{Q}_1} \lambda_{1,(\vx,\vu)} \phi(\vx;\vbeta_k) 
-\sum_{t=2}^{\tau}\sum_{k=1}^{K}  V_{t,k} \ell_t(\vbeta_k;\lambda)\nonumber\\
=& -\sum_{k=1}^{K}  V_{1,k}
 \phi(\vec c;\vbeta_k) 
-\sum_{t=2}^{\tau}\sum_{k=1}^{K}  V_{t,k} \ell_t(\vbeta_k;\lambda). \nonumber &\square
\end{flalign}

\end{proof}

\subsection{Proofs of all results in the manuscript}

\begin{proof}{Proposition \ref{lemma:reducebound}:}
Let $(D_K)$ denote the dual problem with $K$ basis functions. Let $(D_{K+1})$ be the dual problem where a violated flow-balance constraint has been added; i.e. $|\ell_t(\vbeta_{K+1})|>0$ and the following violated constraint for $t$ has been added. Then, \eqref{constraint_beta} reads:
\begin{equation} \label{FBnew} 
 \sum_{(\vx,\vu) \in \mathcal{F}_t} \phi(\vx;\vbeta_{K+1}) \lambda_{t,(\vx,\vu)} 
= \begin{cases} 
- \phi(\vc;\vbeta_{K+1}) &  \mbox{ if } t=1 \\
\sum_{(\vx,\vu) \in \mathcal{F}_{t-1}} \left( \sum_{j=1}^J p_{j,t-1} \phi(\vx-\vec{a}_j u_j;\vbeta_{K+1}) \right.\\
\left. \qquad+ (1 - \sum_{j=1}^J p_{j,t-1}) \phi(\vx;\vbeta_{K+1}) \right) \lambda_{t-1,(\vx,\vu)}   &  \mbox{ otherwise.}
\end{cases}
\end{equation}

Let $\Lambda^K$ and $\Lambda^{K+1}$ be the optimal solutions to $(D_K)$ and $(D_{K+1})$, respectively. The solution  $\Lambda^K$ leads to  $|\ell_t(\vbeta_{K+1})|>0$ and hence violates \eqref{FBnew}.
 Thus, $\Lambda^K$ is not a feasible solution for $(D_{K+1})$. In contrast, $\Lambda^{K+1}$ is feasible to $(D_K)$. Because  $\Lambda^K$ is the unique
 and optimal solution to $(D_K)$,

\begin{flalign*}
    &\sum_{t=1}^{\tau-1} \sum_{(\vx,\vu) \in \mathcal{F}_t}  \lambda^K_{t,(\vx,\vu)}\left\{ \sum_{j=1}^J p_{j,t} \left[f_j u_j+\psi_{t+1}(\vx-\vec{a}_j u_j)\right] +\Big(1- \sum_{j=1}^J p_{j,t}\Big)\psi_{t+1}(\vx)-\psi_t(\vx) \right\} \\
   & \qquad + \sum_{(\vx,\vu) \in \mathcal{F}_\tau} \lambda^K_{\tau,(\vx,\vu) } \Big( \sum_{j=1}^J p_{j,\tau} f_j u_j-\psi_{\tau}(\vx) \Big) \\
    &\not=\sum_{t=1}^{\tau-1} \sum_{(\vx,\vu) \in \mathcal{F}_t}  \lambda^{K+1}_{t,(\vx,\vu)}\left\{ \sum_{j=1}^J p_{j,t} \left[f_j u_j+\psi_{t+1}(\vx-\vec{a}_j u_j)\right] +\Big(1- \sum_{j=1}^J p_{j,t}\Big)\psi_{t+1}(\vx)-\psi_t(\vx) \right\}\\
    & \qquad + \sum_{(\vx,\vu) \in \mathcal{F}_\tau} \lambda^{K+1}_{\tau,(\vx,\vu) } \Big( \sum_{j=1}^J p_{j,\tau} f_j u_j-\psi_{\tau}(\vx) \Big) 
\end{flalign*}
 
%

Because the feasible region of $(D_{K+1})$ does not contain $\Lambda^K$, the optimal objective value of  $(D_{K+1})$  is (strictly) smaller than that of $(D_K)$. Strong duality then implies that linearized \eqref{AP} with the new variables $V_{t,K+1}$ and new basis function $\phi(\vx,\vbeta_{K+1})$ reduces the previous upper bound. \hfill $\square$
\end{proof}

\begin{proof}{Theorem \ref{lemma:column_generation}:}
Using Lemma \ref{lemma:decomposition}, complementary slackness yields
\begin{flalign}
    0&=\medmath{\sum_{t=1}^\tau \sum_{(\vx,\vu)\in \mathcal{F}_t} \hat{\lambda}_{t,(\vx,\vu)} \left\{\hat{v}_t(\vx) - \left\{ \sum_{j=1}^J p_{t,j} \left[f_ju_j+\mathbf{1}_{\{t<\tau\}} \hat{v}_{t+1}(\vx-\vec a_j u_j)\right] 
    +\left( 1-\sum_{j=1}^J p_{t,j}\right) \mathbf{1}_{\{t<\tau\}}\hat{v}_{t+1}(\vx)\right\} \right\}} \nonumber\\
&=\Xi(\hat \xi;\hat{\lambda})+\Psi(\hat{\lambda})+\Phi(\hat V,\hat\vbeta;\hat{\lambda}), \label{eq:decomposition_hat}
\end{flalign}

On the other hand, because $\hat{\lambda}_t,(\vx,\vu)\geq 0$ and $v^*_t(\vx) \geq  \sum_{j=1}^J p_{t,j} \left[f_ju_j+\mathbf{1}_{\{t<\tau\}} v^*_{t+1}(\vx-\vec a_j u_j) \right]+\left( 1-\sum_{j=1}^J p_{t,j}\right) \mathbf{1}_{\{t<\tau\}}v^*_{t+1}(\vx)$ for all $(\vx,\vu)\in \mathcal{F}_t$, we have that

\begin{flalign}
    0&\leq \medmath{\sum_{t=1}^\tau  \sum_{(\vx,\vu)\in \mathcal{F}_t} \hat{\lambda}_{t,(\vx,\vu)} \left\{v^*_t(\vx) - \left\{ \sum_{j=1}^J p_{t,j} \left[f_ju_j+\mathbf{1}_{\{t<\tau\}} v^*_{t+1}(\vx-\vec a_j u_j) \right]+\left( 1-\sum_{j=1}^J p_{t,j}\right) \mathbf{1}_{\{t<\tau\}}v^*_{t+1}(\vx)\right\} \right\}.}\nonumber\\    &=\Xi(\xi^*;\hat{\lambda})+\Psi(\hat{\lambda})+\Phi(V^*,\vbeta^*;\hat{\lambda}) \label{eq:decomposition*}
\end{flalign}
where the equality comes from Lemma \ref{lemma:decomposition}. Subtracting \eqref{eq:decomposition_hat} from \eqref{eq:decomposition*}, yields

\begin{flalign}
    0& \leq\left[\Xi(\xi^*;\hat{\lambda})+\Psi(\hat{\lambda})+\Phi(V^*,\vbeta^*;\hat{\lambda})\right]-\left[\Xi(\hat \xi;\hat{\lambda})+\Psi(\hat{\lambda})+\Phi(\hat V,\vbeta;\hat{\lambda})\right] \nonumber\\
    &= \Xi(\xi^*;\hat{\lambda})-\Xi(\hat \xi;\hat{\lambda})+\Phi(V^*,\vbeta^*;\hat{\lambda})-\Phi( \hat V,\hat \vbeta;\hat{\lambda}) \nonumber\\
&= \xi_1^*-\sum_{k=1}^{K^*}  V^*_{1,k}\phi(\vec c;\vbeta^*_k) 
-\Big(\hat \xi_1 -\sum_{k=1}^{\hat K}  \hat V_{1,k}\phi(\vec c;\hat \vbeta_k) \Big)
-\sum_{t=2}^{\tau}\sum_{k=1}^{K^*}  V^*_{t,k} \ell_t(\vbeta^*_k;\hat{\lambda})
+\sum_{t=2}^{\tau}\sum_{k=1}^{\hat K} \hat  V_{t,k} \ell_t(\hat \vbeta_k;\hat{\lambda}) \nonumber \\
&=\left[v^*_1(\vec c)-\hat v_1(\vec c)\right] -\sum_{t=2}^{\tau}\sum_{k=1}^{K^*}  V^*_{t,k} \ell_t(\vbeta^*_k;\hat{\lambda}).\nonumber
\end{flalign}

\noindent where the last equality holds due to $\ell_t(\hat \vbeta_k;\hat{\lambda})=0$. Since $v^*_1(\vec c)-\hat v_1(\vec c)<0$ by assumption, we cannot have $ \ell_t(\vbeta^*_k;\hat{\lambda})=0$ for all $t=1,...,\tau$, $k=1,...,K^*$. In fact, we need 

\begin{align*}
    \hspace{5cm} -\sum_{t=2}^{\tau}\sum_{k=1}^{K^*}  V^*_{t,k} \ell_t(\vbeta^*_k;\hat{\lambda})\geq v^*_1(\vec c)-\hat v_1(\vec c). \hspace{5cm} \square
\end{align*}

%

\end{proof}
\begin{proof}{Proposition \ref{lemma:ridge}:}
Using the convexity of $g$, we obtain
\begin{align*}
\phi(\lambda \vx +(1-\lambda) \vx')&=g(\vbeta^{\top}(\lambda \vx + (1-\lambda) \vx'))
=g(\lambda\vbeta^{\top}\vx+(1-\lambda)\vbeta^{\top}\vx')\\
&\leq \lambda g(\vbeta^{\top}\vx)+(1-\lambda)g(\vbeta^{\top}\vx')
=\lambda\phi( \vx)+(1-\lambda) \phi(\vx').& \square
\end{align*}
%


%
\end{proof}

\begin{proof}{Proposition \ref{prop:bounds}:}

For given $k$, a lower bound of the basis function \eqref{exponential_basis} can be found by solving 
\begin{align*}
\min_{\vbeta_k,\vx\in \{0,1,...,c_i\}^I} \qquad e^{-\sum_i \beta_{i,k} x_i } & \qquad 
\mbox{s.t.} \qquad \sum_i c_i |\beta_{i,k}| =1
\end{align*}
Since the objective function is monotone decreasing in $\sum_i \beta_{i,k} x_i$, we can write this  problem as
\begin{align*}
\max_{\vbeta_k,\vx\in \{0,1,...,c_i\}^I}  \qquad \sum_i \beta_{i,k} x_i  & \qquad 
\mbox{s.t.}  \qquad \sum_i c_i |\beta_{i,k}|=1  
\end{align*}
This objective function is monotone increasing in every $x_i$ with $\beta_{i,k}\geq 0$ and monotone decreasing in $x_i$ given $\beta_{i,k}<0$. Hence, the optimal value of the previous problem is upper bounded by the optimal objective of the following program
\begin{align*}
\max_{\vbeta_k,\vx\in \{0,1,...,c_i\}^I} \qquad \sum_i \beta_{i,k} x_i  & \qquad
\mbox{s.t.}  \qquad \sum_i c_i \beta_{i,k}=1, 
 \ \ \beta_{i,k} \geq 0\qquad \forall i=1,...,I 
\end{align*}

Given the non-negativity constraint on the $\vbeta_k$ and the norm constraint, the objective is upper bounded as follows $\sum_i \beta_{i,k} x_i \leq \sum_i \beta_{i,k} c_i =1$. Hence, a lower bound of the basis function \eqref{exponential_basis}  is given by $e^{-1}$ as long as $\vbeta_k$ is constrained by \eqref{Wnorm1}. Similarly, an upper bound of  $e^{1}$ can be obtained. 
 \hfill $\square$ 
\end{proof}

\section{Additional details on the algorithms \label{appendix:algorithms}}


\subsection{Flow-imbalances including monotonicity constraints}

We use dual variables $\lambda_{t,(\vec x,\vec u)}$ for the constraints defined in $(\vx,\vu)\in \mathcal B_t^{\lambda}$,  and $\mu_{t,i,\vec x}$ for the monotonicity constraints $(i,\vx)\in \mathcal B_t^{\mu}$. Then the dual formulation of the approximate LP including monotonicity constraints is

\begin{samepage}
\begin{alignat}{2}
  \max_{\lambda_{t,(\vx,\vu)} \ge 0} \quad & \sum_{t=1}^{\tau-1} \sum_{(\vx,\vu) \in \mathcal{B}_{t}^\lambda}  \lambda_{t,(\vx,\vu)}\left\{ \sum_{j=1}^J p_{j,t} \left[f_j u_j+\psi_{t+1}(\vx-\vec{a}_j u_j)\right] +\left(1- \sum_{j=1}^J p_{j,t}\right)\psi_{t+1}(\vx)-\psi_t(\vx) \right\} \nonumber \\
& \quad + \sum_{(\vx,\vu) \in \mathcal{B}_{\tau}^\lambda} \lambda_{\tau,(\vx,\vu) } \left( \sum_{j=1}^J p_{j,t} f_j u_j-\psi_{\tau}(\vx) \right)  
  \label{Dmu}\tag{D$^\mu$} \\
\text{s.t.} \quad & \sum_{(\vx,\vu) \in \mathcal{B}_{t}^\lambda} \lambda_{t,(\vx,\vu)} 
= \begin{cases}
1 & \text{if } t=1 \\
\displaystyle \sum_{(\vx,\vu) \in \mathcal{B}_{t-1}^\lambda } \lambda_{t-1,(\vx,\vu)} & \text{if } t=2,\dots,\tau
\end{cases} 
\nonumber\\
& \sum_{(\vx,\vu) \in \mathcal{B}^\lambda_t} \lambda_{t,(\vx,\vu)}  \phi(\vx;\vbeta_k)  -\displaystyle   \sum_{(i,\vx) \in \mathcal{B}_{t}^\mu} (\phi_k(\vx;\vbeta_k) -  \phi_k(\vx^{+i};\vbeta_k)) \mu_{t,i,\vec x}= \nonumber\\
&\quad \begin{cases} 
\phi(\vc;\vbeta_k) & \forall k,t=1 \\
\displaystyle\sum_{(\vx,\vu) \in \mathcal{B}_{t-1}^\lambda} \lambda_{t-1,(\vx,\vu)} \left( \sum_{j=1}^J p_{j,t-1} \phi(\vx-\vec{a}_j u_j;\vbeta_k) \right. \left. + (1 - \sum_{j=1}^J p_{j,t-1}) \phi(\vx;\vbeta_k) \right) & \forall k, t=2,\dots,\tau. 
\end{cases} \label{constraint_beta_mu} \tag{D.FB$^\mu$}
\end{alignat}
\end{samepage}

This yields the flow-imbalance functions

\begin{flalign*}
\ell^\mu_1(\vbeta)= &\displaystyle \sum_{(\vx,\vu) \in \mathcal{B}_1^\lambda} \phi{_k}(\vx;\vbeta) \lambda_{1,(\vx,\vu)}
-\displaystyle  \sum_{(i,\vx) \in \mathcal{B}_{1}^\mu} (\phi{_k}(\vx;\vbeta)-\phi{_k}(\vx;^{+i}\vbeta)) \mu_{1,i,\vec x}+ \phi{_k}(\vc;\vbeta)  & 
\end{flalign*}
\begin{flalign*}
\ell^\mu_t(\vbeta)=  &\displaystyle \sum_{(\vx,\vu) \in \mathcal{B}_t^\lambda} \phi{_k}(\vx;\vbeta) \lambda_{t,(\vx,\vu)}
-\displaystyle   \sum_{(i,\vx) \in \mathcal{B}_{t}^\mu} (\phi{_k}(\vx;\vbeta)- \phi{_k}(\vx^{+i};\vbeta))\mu_{t,i,\vec x}\\
&\quad - \displaystyle \sum_{(\vx,\vu) \in \mathcal{B}_{t-1}^\lambda} \left( \sum_{j=1}^J p_{j,t-1}\phi{_k}(\vx-\vec a_j u_j;\vbeta)+(1-\sum_{j=1}^J p_{j,t-1})\phi{_k}(\vx;\vbeta) \right) \lambda_{t-1,(\vx,\vu)} & \nonumber  \qquad  t=2,\dots,\tau\nonumber.
\end{flalign*}


\subsection{Pseudocodes}

\vspace{-0.25cm}
\begin{algorithm}[H]
    \caption{Row Generation Algorithm \label{row_generation}}
  \begin{algorithmic}[1] 
  \Require Initial subset of constraints $\mathcal B^{\lambda}_1, \dots, \mathcal B^{\lambda}_{\tau-1},  \mathcal B^{\lambda}_{\tau}$.
  \While{stopping criterion $\mathcal{C}_R$ is not met} \Comment{Solve the master problem}
  \State Solve the master problem \eqref{AP} given constraints $\mathcal{B}=\bigcup_{t=1}^{\tau}\mathcal{B}_t^\lambda$ to obtain $\hat \xi,\hat V, \hat  \vbeta$.
  \For{$t=1,\dots,$ $\tau$} \Comment{Generate rows}
  \If{ $t<\tau$} 
  \State Solve subproblem (\ref{subproblem_t_primal}) to obtain solution $(\vec x^*_t,\vec u^*_t)$
  \Else  
  \State Solve subproblem (\ref{subproblem_Tau_primal}) to obtain solution $(\vec x^*_{\tau}\,\vec u^*_{\tau})$
  \EndIf
        \If {objective value is negative}  
       \State $\mathcal B^\lambda_t  \leftarrow \mathcal B^\lambda_t \cup (\vec x^*_t,\vec u^*_t)$ 
        \EndIf
 \EndFor
   \EndWhile
      \Ensure Estimates $\hat \xi,\hat V, \hat  \vbeta$ for the improved approximation
  \end{algorithmic}
\end{algorithm}

\vspace{-0.75cm}

\begin{algorithm}[H]
    \caption{Nonlinear Incremental Algorithm (NLIAlg)  \label{DC_algorithm}}
  \begin{algorithmic}[1] 
  \Require Initial subset of constraints $\mathcal B^{\lambda}_1, \dots, \mathcal B^{\lambda}_{\tau-1},  \mathcal B^{\lambda}_{\tau}$
\Initialize $K \leftarrow 0$
  \While{stopping criterion $\mathcal{C}_K$ is not met}
  \State  $K \leftarrow K+1$ \Comment{Increment the number of basis functions}
  \State Start Row Generation Algorithm given $\mathcal B^{\lambda}_1, \dots, \mathcal B^{\lambda}_{\tau-1},  \mathcal B^{\lambda}_{\tau}$ and $K$ to obtain $\hat {\vec \xi},\hat {\vec V}, \hat  \vbeta_k, k=1,\dots,K$.
  \EndWhile
      \Ensure Number of basis functions $K$ and estimates $\hat {\vec \xi},\hat {\vec V}, \hat  \vbeta_k, k=1,\dots,K$ for the improved approximation
  \end{algorithmic}
\end{algorithm}

\begin{algorithm}[h!]
    \caption{Two-Phase Incremental Algorithm  (2PIAlg) \label{2phase_incremental}}
  \begin{algorithmic}[1] 
    \Require Initial subset of constraints $\mathcal B^{\lambda}_1, \dots, \mathcal B^{\lambda}_{\tau-1},  \mathcal B^{\lambda}_{\tau}$ and 
    \Initialize $K \leftarrow 0$ 
   \While{stopping criterion $\mathcal{C}_K$ is not met}
  \State  $K \leftarrow K+1$  \Comment{Generate a new basis function}
  \If{ $K=1$} 
  \State Pick any $\hat \vbeta_{1}$ fulfilling constraint \eqref{Wnorm1}
  \Else 
  \State Solve (\ref{Wflowimbalance}) to find $\hat \vbeta_{K}$ 
  \EndIf   \Comment{Row Generation Algorithm}
   \State Solve the LP \eqref{AP} for fixed $K$ and $\hat  \vbeta_1,...,\hat  \vbeta_K$ to estimate $\hat {\vec \xi},\hat {\vec V}$
     \EndWhile
      \Ensure Number of basis functions $K$ and estimates $\hat {\vec \xi},\hat {\vec V},\hat  \vbeta_1,...,\hat  \vbeta_K$ for the improved approximation
  \end{algorithmic}
\end{algorithm}


\begin{algorithm}[h!]
    \caption{Heuristic Row Generation Algorithm including monotonicity constraints \label{heuristic_row}}
  \begin{algorithmic}[1] 
  \Require $\vbeta$ and initial subset of constraints $\mathcal B^{\lambda}_1, \dots, \mathcal B^{\lambda}_{\tau-1},  \mathcal B^{\lambda}_{\tau}$ and $\mathcal{B}_1^\mu,\dots,\mathcal{B}_\tau^\mu$.
  \While{stopping criterion $\mathcal{C}_R$ is not met} \Comment{Solve the master problem}
  \State Solve the master problem \eqref{AP} given constraints $\mathcal{B}=\bigcup_{t=1}^{\tau}\mathcal{B}_t$ to obtain $\hat \xi,\hat V$.
  \State Initialize $\mathcal{T} \leftarrow \{1,...,\tau\}$
 \While{$\mathcal{T} \neq \emptyset$}\Comment{Generate rows}
 \State Randomly pick $t \in \mathcal{T}$
  \State Use a {\it local} solver to solve either subproblem (\ref{subproblem_Tau_primal}) or  (\ref{subproblem_t_primal}) and obtain solution $(\hat{\vec x}_t,\hat{\vec u}_t)$
  \State $\mathcal{T}^- \leftarrow \{t' \in \mathcal{T}: \ \hat{\pi}_{\mathcal B,t'}<0\}$
  \State $\mathcal B^\lambda_{t'}  \leftarrow \mathcal B^\lambda_{t'} \cup(\hat{\vec x}_t,\hat{\vec u}_t)$ for all $t'\in \mathcal{T}^- $ \Comment{Add all rows with negative reduced costs}
  \State $\mathcal{T}^- \leftarrow  \mathcal{T} \setminus \mathcal{T}^- $ \Comment{Update $\mathcal{T}$}
    \EndWhile
   \State Initialize $\mathcal{T} \leftarrow \{1,...,\tau\}$
 \While{$\mathcal{T} \neq \emptyset$} \Comment{Generate monotonicity constraints}
 \State Randomly pick $t \in \mathcal{T}$ and $i \in \{1,...,T\}$.
 \State Use a {\it local} solver to  mimize RHS of \eqref{constraint_extra}  to obtain solution $(i,\hat{\vec x}_t)$
 \For{$i'=1,...,I$}
 \State $\mathcal{T}^-_{i'} \leftarrow \{t' \in \mathcal{T}: \ \hat{\rho}_{\mathcal B,t',i'}<0\}$
 \State $\mathcal{B}^\mu_{t'}  \leftarrow \mathcal B^\mu_{t'} \cup (i',\vec x^*_t)$ for all $t'\in \mathcal{T}^-_{i'}$ \Comment{Add all rows with negative reduced costs}
 \EndFor
 \State $\mathcal{T}^- \leftarrow  \mathcal{T} \setminus \bigcup_{i'} \mathcal{T}^- _{i'}$ \Comment{Update $\mathcal{T}$}
\EndWhile
    \EndWhile
      \Ensure Estimates $\hat \xi,\hat V, \hat  \vbeta$ for the improved approximation
  \end{algorithmic}
\end{algorithm}

\section{Computational details\label{appendix_comp}}
At the end of each row generation cycle, we simulate the policy given by \eqref{policy} to determine if our estimate of $Z_{\phi}$ is close enough to the simulated average policy revenue and the algorithm is stopped, or if we should generate  another basis function. To obtain the average policy revenue for a given value function approximation we
\begin{itemize}
	\item[0.] let $N=1$;
	\item[1.] simulate the policy given by \eqref{policy} for one instance, i.e. one departure date, and save the total revenue obtained as $R^{(N)}$;
	\item[2.] compute the average policy revenue $	\overline{R}=\frac{1}{N} \sum_{n=1}^{N} R^{(n)}$ as well as the standard error of $\overline{R}$. 
	$S_e=\frac{1}{\sqrt{N}}\left(\sqrt{\frac{1}{N} \sum_{n=1}^{N} (R^{(n)})^2- \overline{R}^2}\right)$;
	\item[3.] If $S_e/\overline{R}<\Omega^{policy}$, the simulation is stopped, otherwise $N=N+1$ and go to 1.
\end{itemize} 
In our numerical experiments, we used $\Omega^{policy}= 0.1\%$.



 For the stopping criteria, we used $\Omega^{gap}=0.1\%, \ \Omega^{policy}=0.1\%$ and $\Omega^{p-gap}=1\%$. For the H-2PIAlg, {\tt Knitro} was used to solve the linear master program, the nonlinear integer subproblems, and the nonconvex continuous flow-balance maximization problem. To further improve computational performance, we chose time limits of 5 seconds to solve the subproblems. For the NLIAlg, the global solver {\tt Baron} was used to solve any DC problem.

Finally, Table \ref{table:SH_instances} summarizes the capacities of the hub-and-spoke instances generated for each number of non-hub locations $L$ and time periods $\tau$. 

\begin{table}[h!]
	\centering	\begin{small}
		\begin{tabular}{r|ccccc}
\backslashbox{$\tau$}{ L}	&	2	&	3	&	5	&	10	&	20	\\
 \hline
20	&	3	&	2	&	2	&	1	&	1	\\
50	&	8	&	6	&	4	&	2	&	1	\\
100	&	17	&	12	&	8	&	5	&	2	\\
200	&	33	&	25	&	17	&	9	&	5	\\
500	&	83	&	62	&	41	&	23	&	12	\\
1000	&	165	&	124	&	83	&	45	&	24	\\
\end{tabular}
\end{small}
\caption{Capacities of the hub-and-spoke instances generated. \label{table:SH_instances}}
\end{table}


\section{Additional Numerical Results}

\subsection{Comparing the performance of H-2PIAlg with hat versus exponential ridge basis functions}

Piece-wise linear 
hat functions, also known as hat functions or B-splines of degree 1, could take the following form in our our NRM setting
\begin{equation} \label{hat_basis}
    \phi_t(\vx;\vbeta_k,\vec{b}_k):=\left\{\begin{array}{ll}
    \frac{\vbeta^\top \vx -b_{k,t-1}}{b_{k,t}-b_{k,t-1}}     &  \mbox{ if } b_{k,t-1} \leq \vbeta^\top \vx \leq b_{k,t}   \\
     \frac{b_{k,t+1}-\vbeta^\top \vx }{b_{k,t+1}-b_{k,t}}     &  \mbox{ if } b_{k,t} \leq \vbeta^\top \vx \leq b_{k,t+1}   \\
     0 & \mbox{ otherwise,}
    \end{array} \right.
\end{equation}
where not only the $I$-dimensional parameter vector $\vbeta$ but also the $(\tau+2)$-dimensional vector $\vec{b}$ need to be estimated via maximizing flow imbalances. To generate a new basis function with $\phi(\cdot)$ of the form \eqref{hat_basis}, H-2PIAlg needs to solve a MINLP with approximately \(\sum_{t=1}^\tau (10+J)N_t\) constraints in the worst-case scenario, where \(N_t\) is the number of rows generated for period \(t\). When $N_t$ is large, as it will be for moderately to large instances, the use of hat functions impractical. In contrast, generating exponential basis functions entails solving a continuous problem with \(I\) variables and one constraint. 
We considered modifying \eqref{hat_basis} by parametrizing basis functions over time, specifically generating \(\tau\) basis functions with parameters \(\vbeta_{t,K+1}\) and breakpoints \(b_{t,-1}, b_{t,0}, b_{t,1}\) for \(t=1,...,\tau\). While this increases the dimensionality of our approximation, it allows for the decomposition of the flow-imbalance maximization problem.  

The toy example discussed in Section \ref{sec:hat_vs_exponential} uses the fares and arrival probabilities specified in Table \ref{parameters_toy}. Results reported in Section \ref{sec:hat_vs_exponential} were computed using {\tt Knitro}. To further assess the tractability of piecewise linear basis functions, we attempted to use {\tt Gurobi} instead of {\tt Knitro} for row generation in H-2PIAlg with hat basis functions. We chose {\tt Gurobi} because it can convert the subproblem, which involves products of bounded continuous variables and binary variables, into an LP. However, after generating 90 basis functions (9 per period of time), {\tt Gurobi} also struggled to produce new rows.

\begin{table}[h!]
\centering
\begin{tabular}{c|ccc|ccc}
 & \multicolumn{3}{c|}{low fare} &  \multicolumn{3}{c}{high fare} \\
 & AB &BC &AC &AB &BC &AC\\
  \hline
$ f_{j}$ & 20 & 30 &42 & 100 & 150 & 210\\
 $p_{j}$ & 0.3 &0.1125  & 0.1875 & 0.1 & 0.0375  & 0.0625\\
\end{tabular} 
\caption{Parameters of the toy example. \label{parameters_toy}}
\end{table}


Table \ref{table:toy} provides the upper bounds, average revenues \(\overline{R}\) obtained by simulating policy (\ref{policy}), and CPU times in seconds for H-2PIAlg using exponential ridge basis functions.
Figure \ref{fitting_toy} visualizes the different iterations of the H-2PIAlg in this small example. The black dots represent the true values $v_t(\vx)$, computed using value iteration, for some periods $t$. The surface corresponds to the approximated function 
$\xi_t -\sum_{k=1}^K V_{t,k} \phi(\vx;\vbeta_{k})$ after $K$ basis functions have been added. Plots in the first row show the results of the affine approximation for comparison. The following lines show the corresponding pictures of the H-2PIAlg for $K=1,4$ and 8. We can observe that the more basis functions the H-2PIAlg adds, the more the surface resembles the true value function, fitting the optimal value function at the discrete points representing feasible states. 



\begin{table}[h!]
	\centering	\begin{small}
		\begin{tabular}{l|rrr}
			&\multicolumn{3}{c}{H-2PIAlg}\\
			$K$ 	&  \multicolumn{1}{c}{$\hat{Z}_{\phi}$} &\multicolumn{1}{c}{$\overline{R}$} & Time\\
			\hline	
1	&	448.5	&	324.2	&	0	\\
2	&	430.4	&	386.2	&	1	\\
3	&	420.5	&	387.1	&	4	\\
4	&	419.6	&	387.1	&	6	\\
5	&	418.1	&	385.3	&	8	\\
6	&	412.4	&	393.7	&	11	\\
7	&	411.5	&	394.5	&	13	\\
8	&	404.1	&	395.7	&	21	\\
9	&	401.8	&	395.0	&	25	\\
10	&	399.5	&	397.2	&	28	\\
\end{tabular}
\end{small}
\caption{Upper bound on the value function and average simulated revenue for the H-2PIAlg. \label{table:toy}}
\end{table}

\begin{figure}[h!]
\centering
\begin{tabular}{ccc}
\includegraphics[width=170pt]{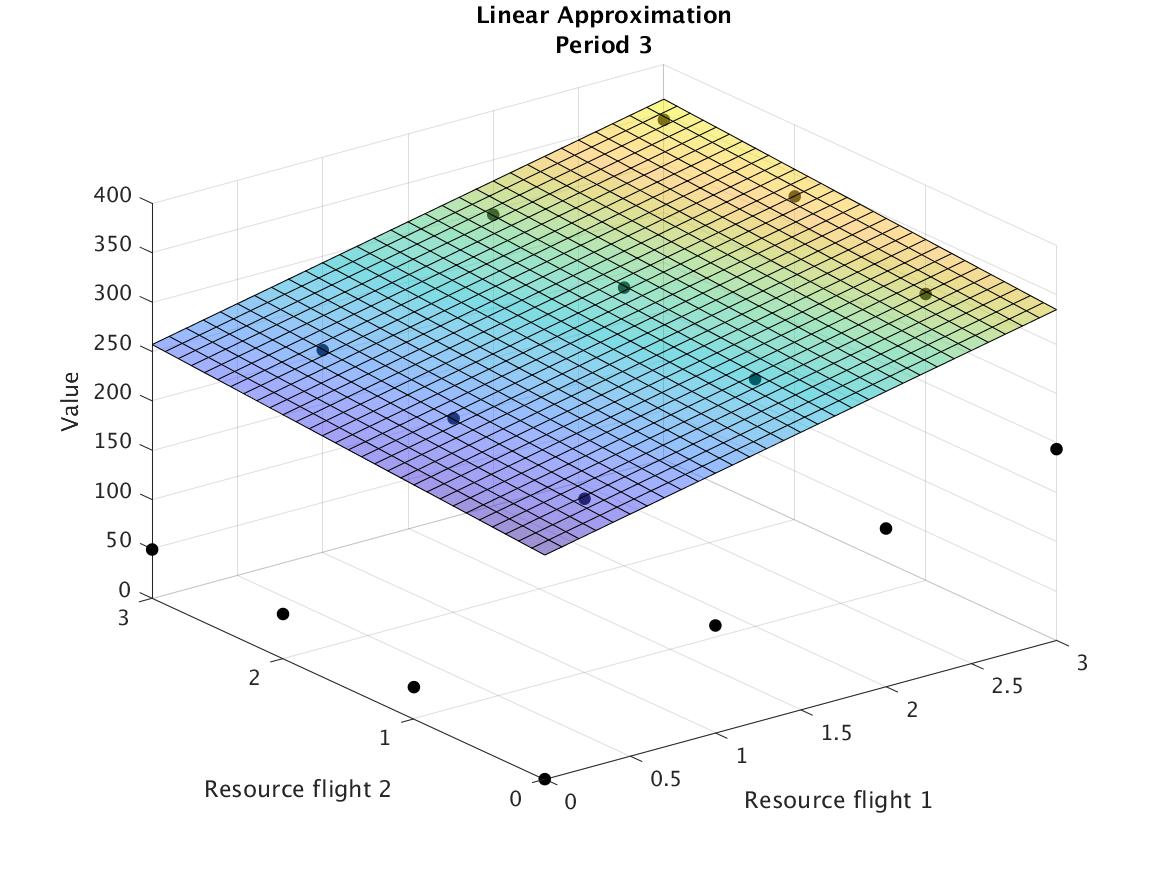} &  \includegraphics[width=170pt]{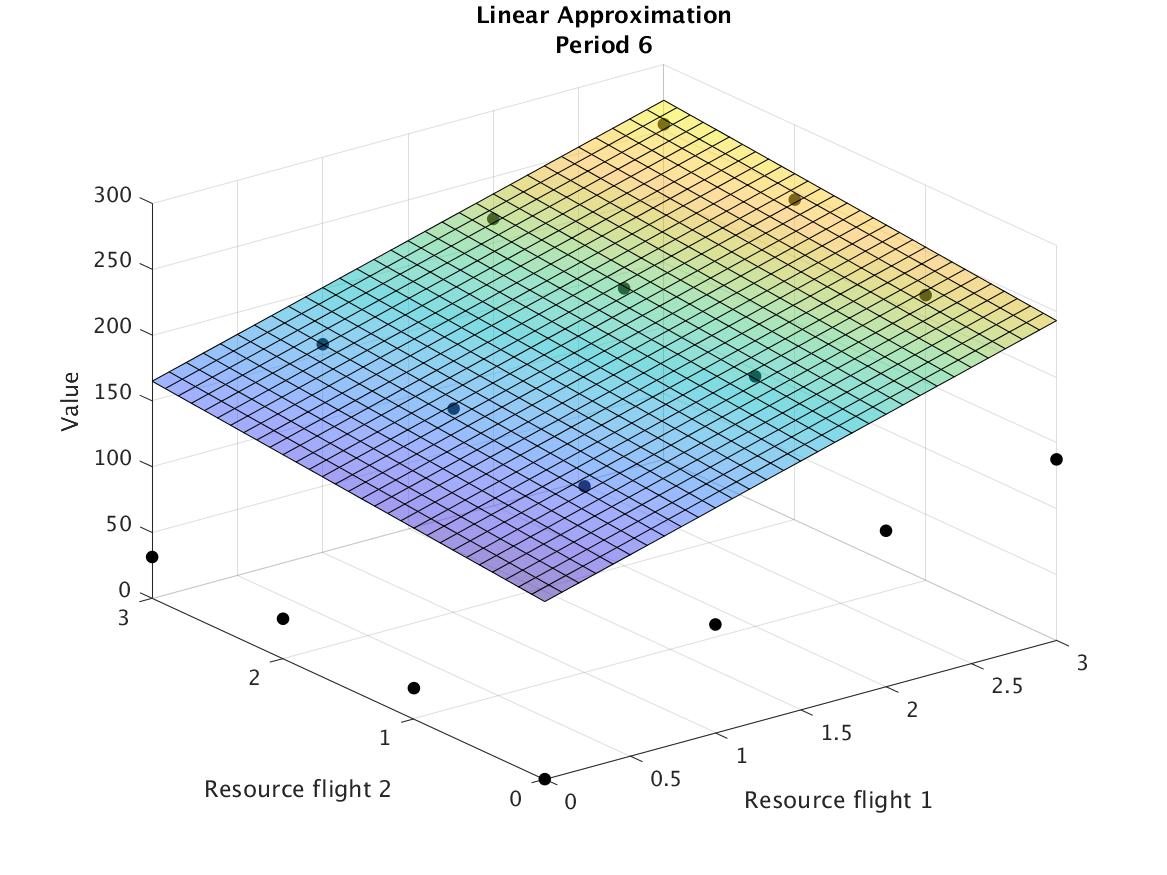} &\includegraphics[width=170pt]{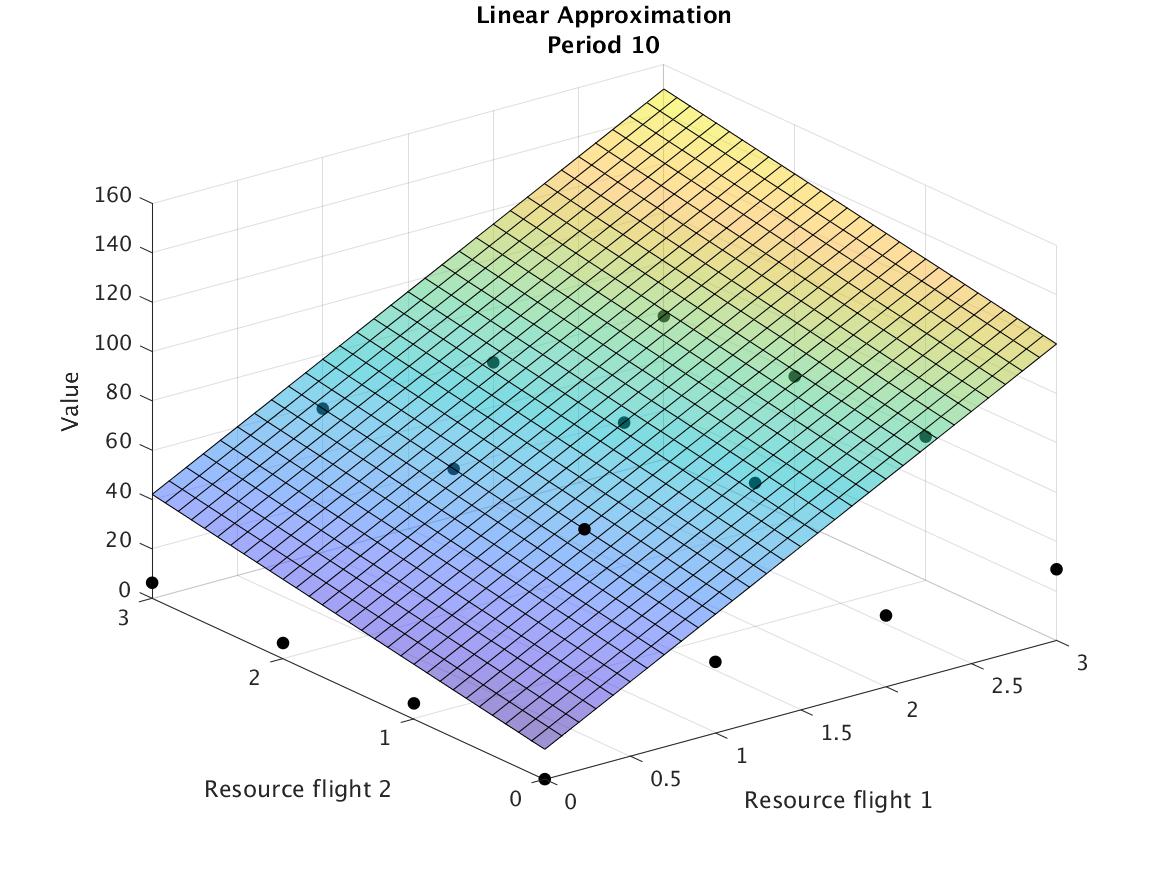} \\
\includegraphics[width=170pt]{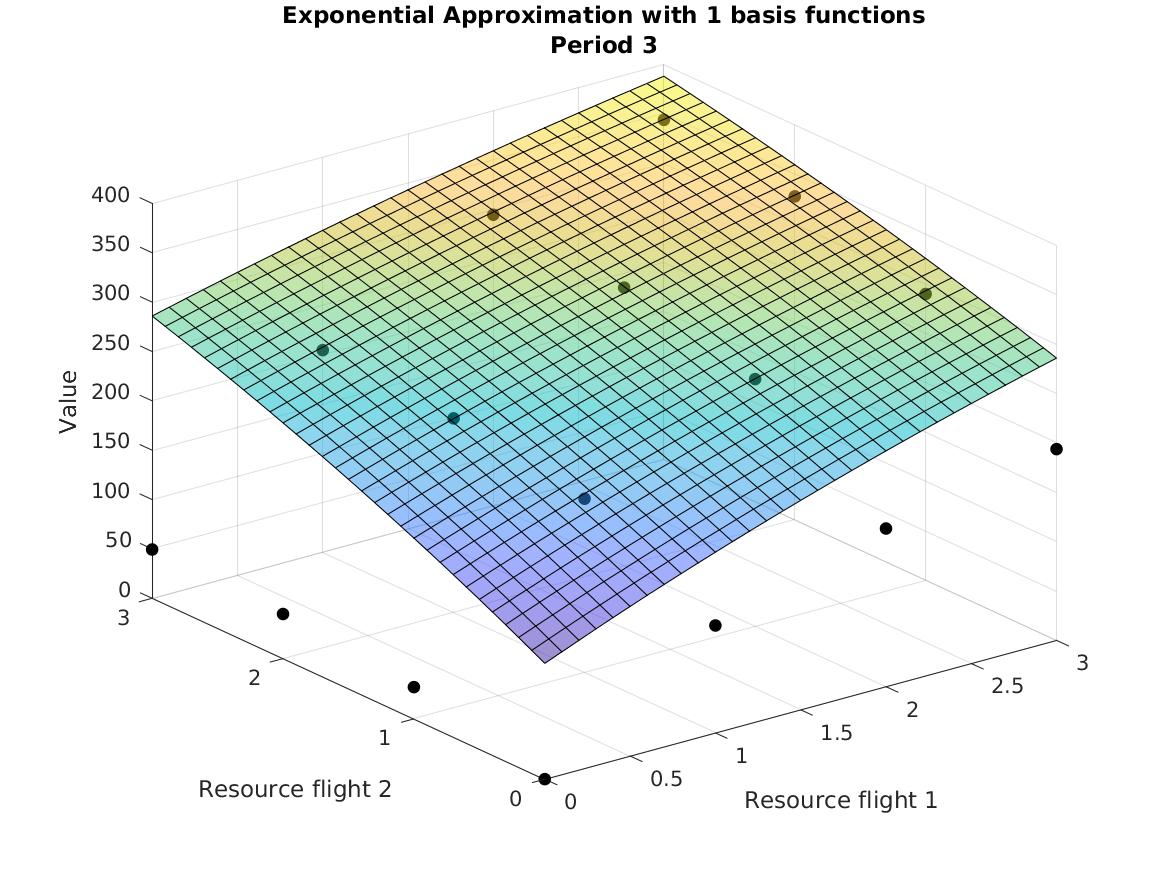} &  \includegraphics[width=170pt]{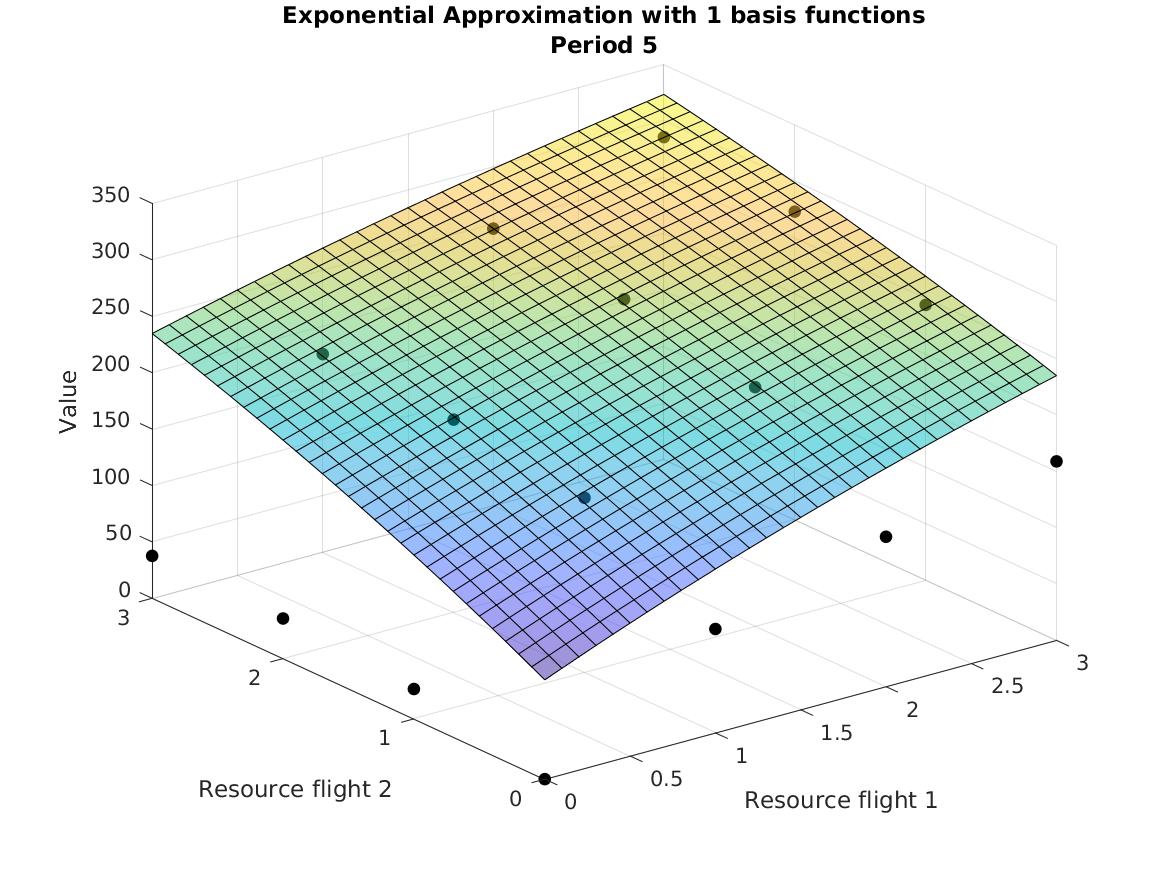} &\includegraphics[width=170pt]{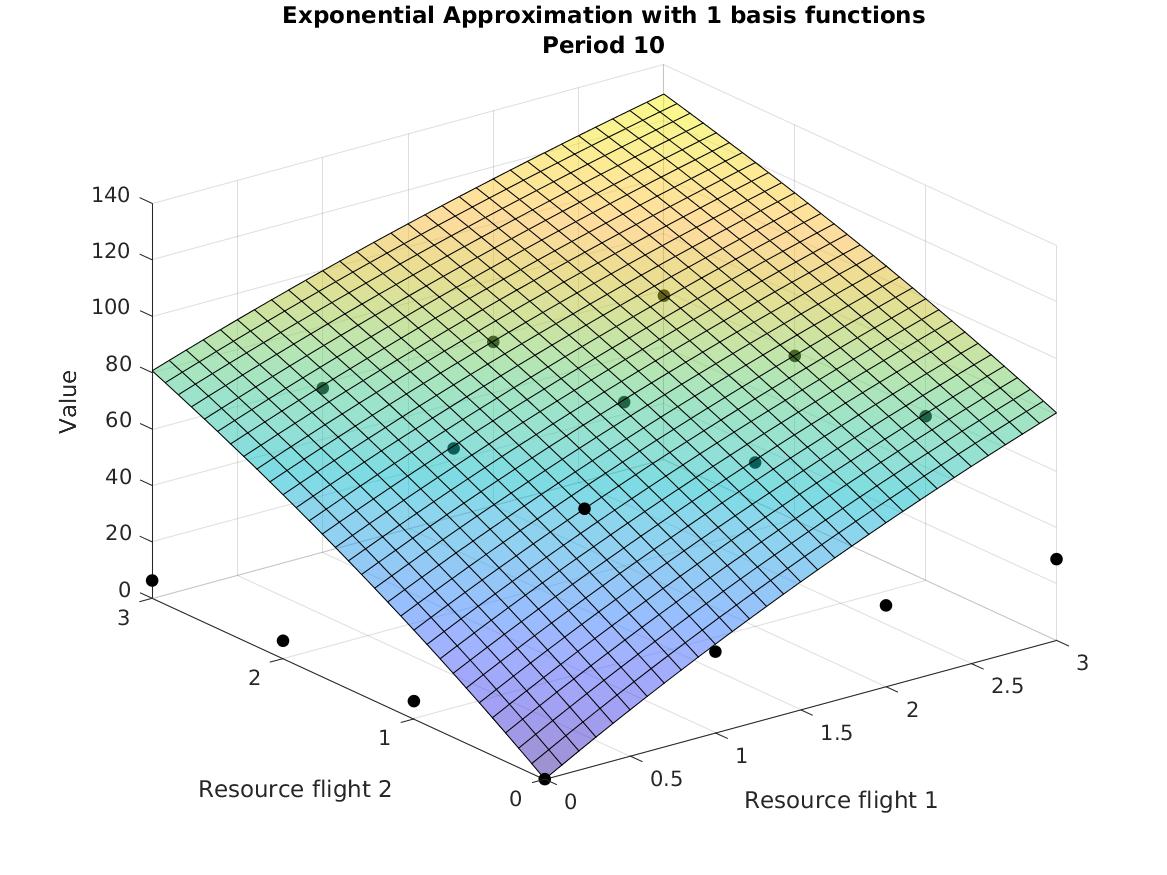} \\
\includegraphics[width=170pt]{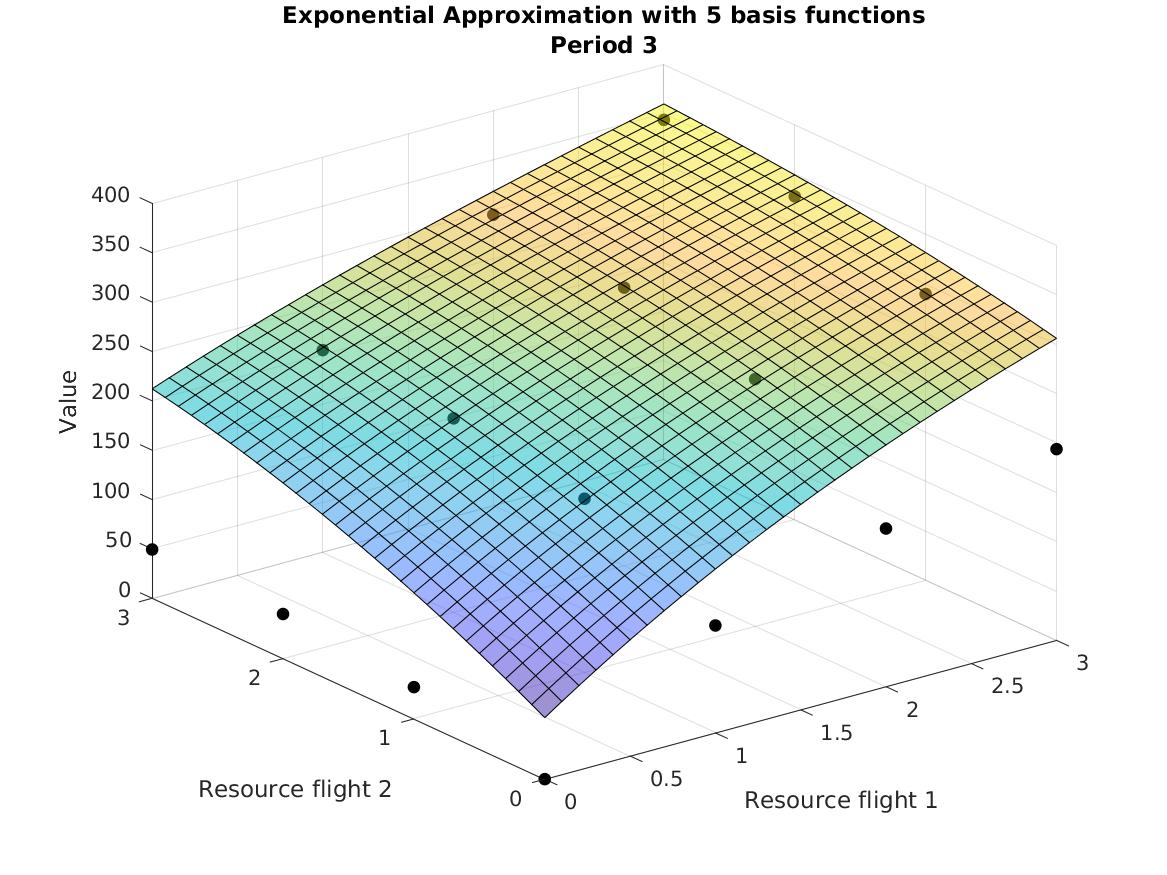} &  \includegraphics[width=170pt]{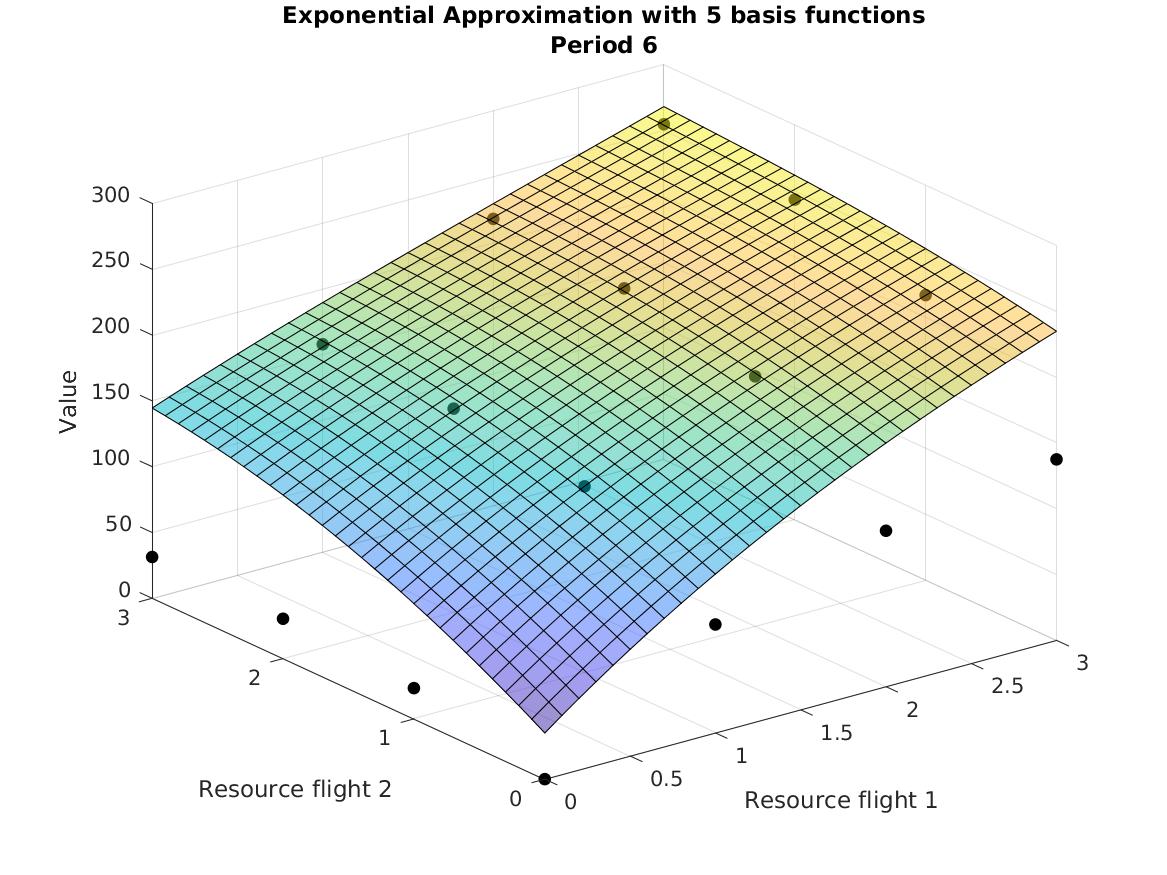} &\includegraphics[width=170pt]{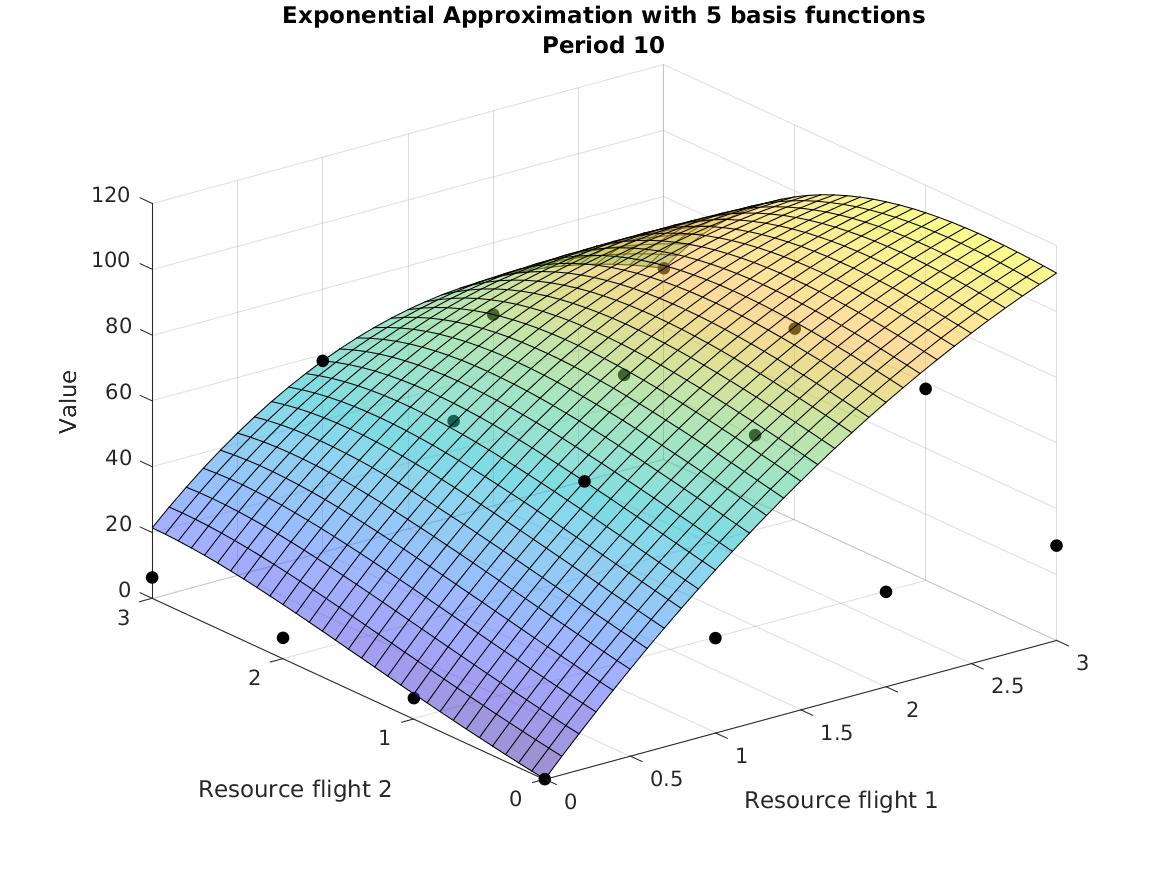} \\
\includegraphics[width=170pt]{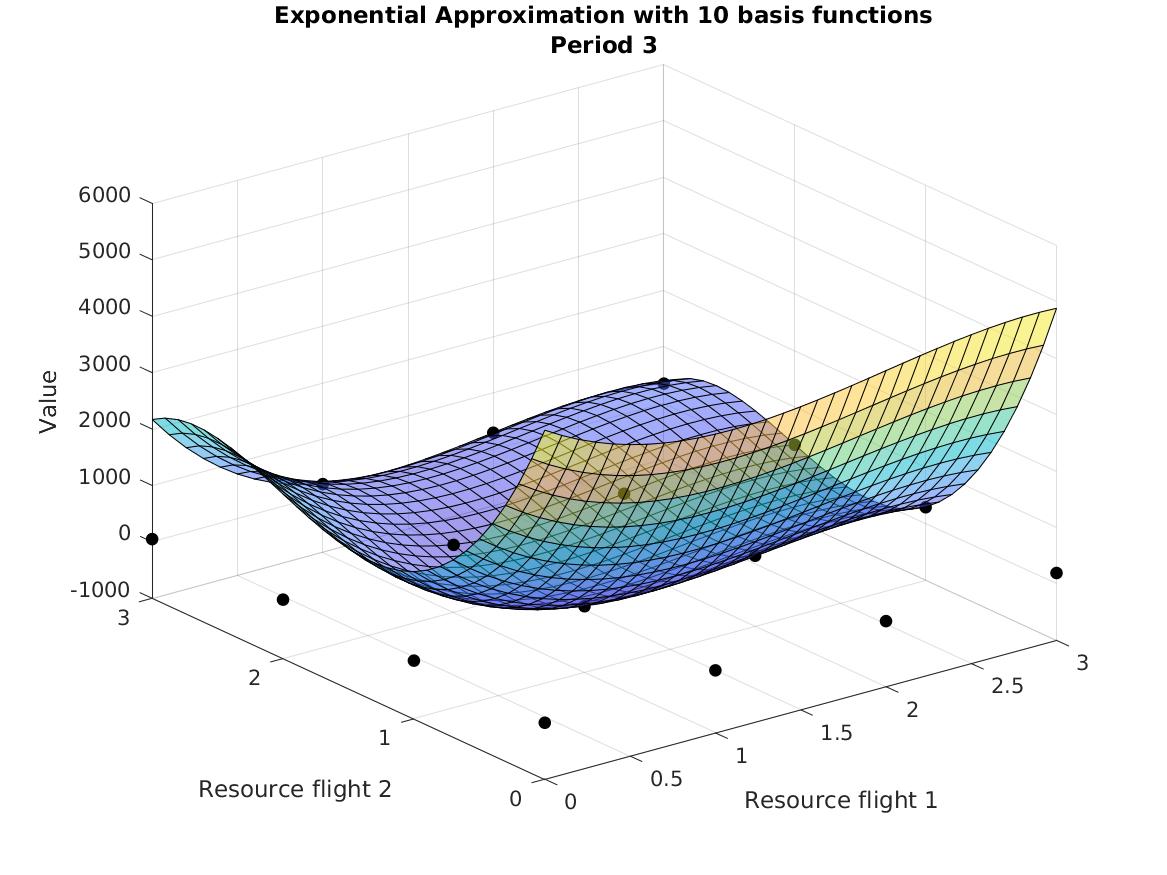} &  \includegraphics[width=170pt]{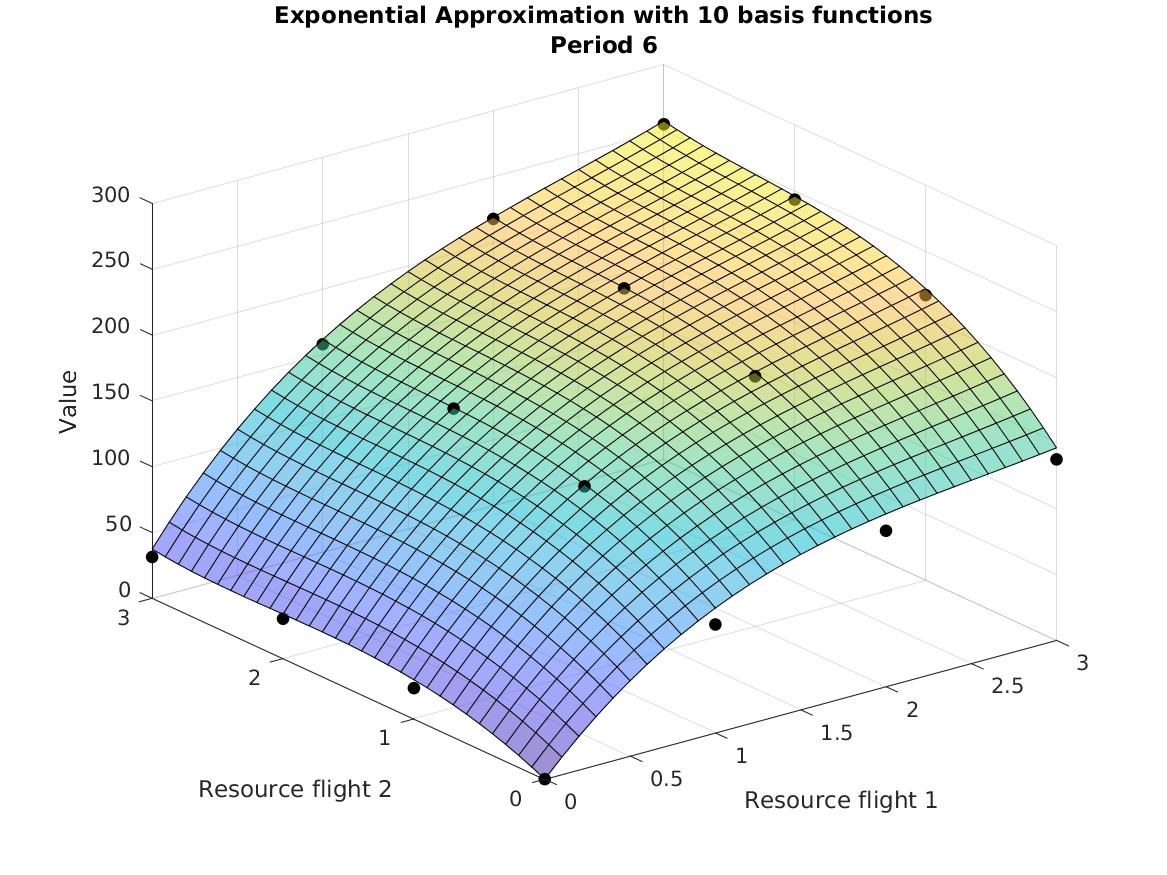} &\includegraphics[width=170pt]{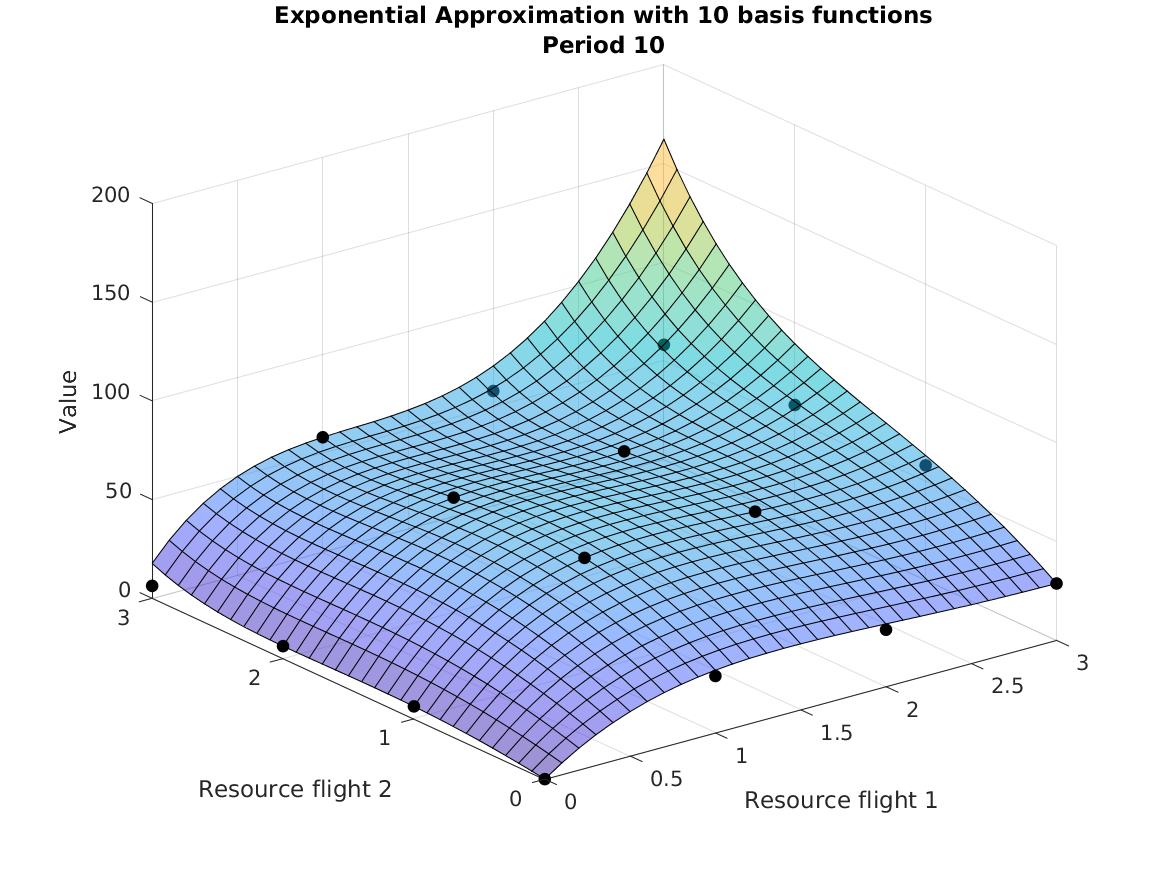} \\
\end{tabular}\\
\caption{\label{fitting_toy}{Approximated value function through linear and exponential basis functions}}
\end{figure}

\subsection{Results from NLIAlg \label{appendix_dc}}

Table \ref{table:toyNLIAlg} displays the upper bound, simulated average reward and CPU time in seconds for the approximation provided by NLIAlg as the number of basis functions increases.

\begin{table}[H]
	\centering	\begin{small}
		\begin{tabular}{l|rrr}
			&\multicolumn{3}{c}{NLIAlg}\\
			$K$ 	& \multicolumn{1}{c}{$\hat{Z}_{\phi}$} &\multicolumn{1}{c}{ $\overline{R}$} & Time	 \\
			\hline	
1	&	418.7	&	385.1	&	1813		\\
2	&	410.9	&	388.9	&	3022		\\
3	&	405.7	&	394.2	&	3627		\\
4	&	401.4	&	396.6	&	4846		\\
5	&	400.4	&	397.1	&	5157		\\
\end{tabular}
\end{small}
\caption{Upper bound on the value function and average simulated revenue for the NLIAlg. \label{table:toyNLIAlg}}
\end{table}

\color{black}

Figure \ref{convergence_DC1} shows the fitting of the value function estimated via the NLIAlg in the toy example introduced in Section \ref{sec:hat_vs_exponential}. The black dots represent the true values $v_t(\vx)$, computed using value iteration. The surface corresponds to the approximated function $\xi_t -\sum_{k=1}^K V_{t,k} \phi_{k}(\vx;\vbeta_{k})$ after $K$ basis functions have been added. The first, second and third rows of the figure show the fitting of the NLIAlg for $K=1,3$ and 5. 

\begin{figure}[h!]
	\centering
	\begin{tabular}{ccc}
		\includegraphics[width=170pt]{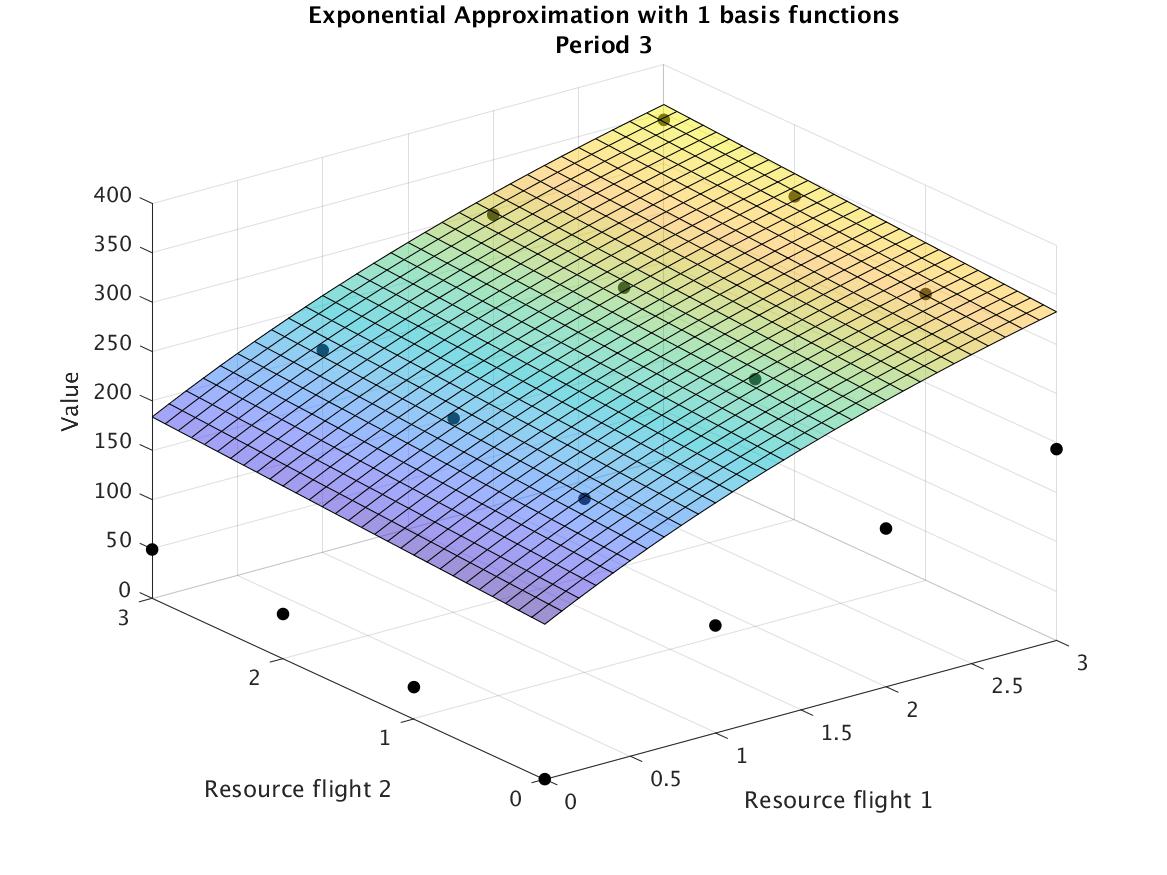} &  \includegraphics[width=170pt]{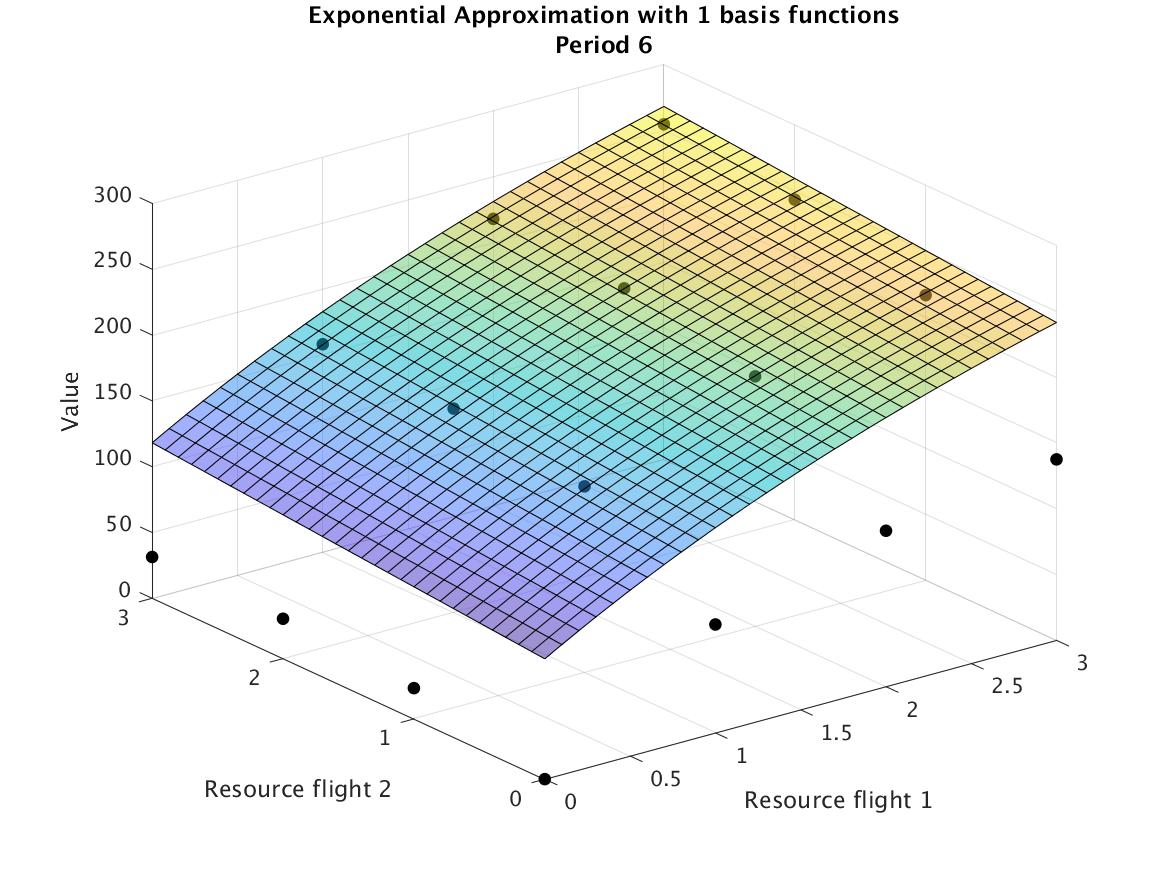} &\includegraphics[width=170pt]{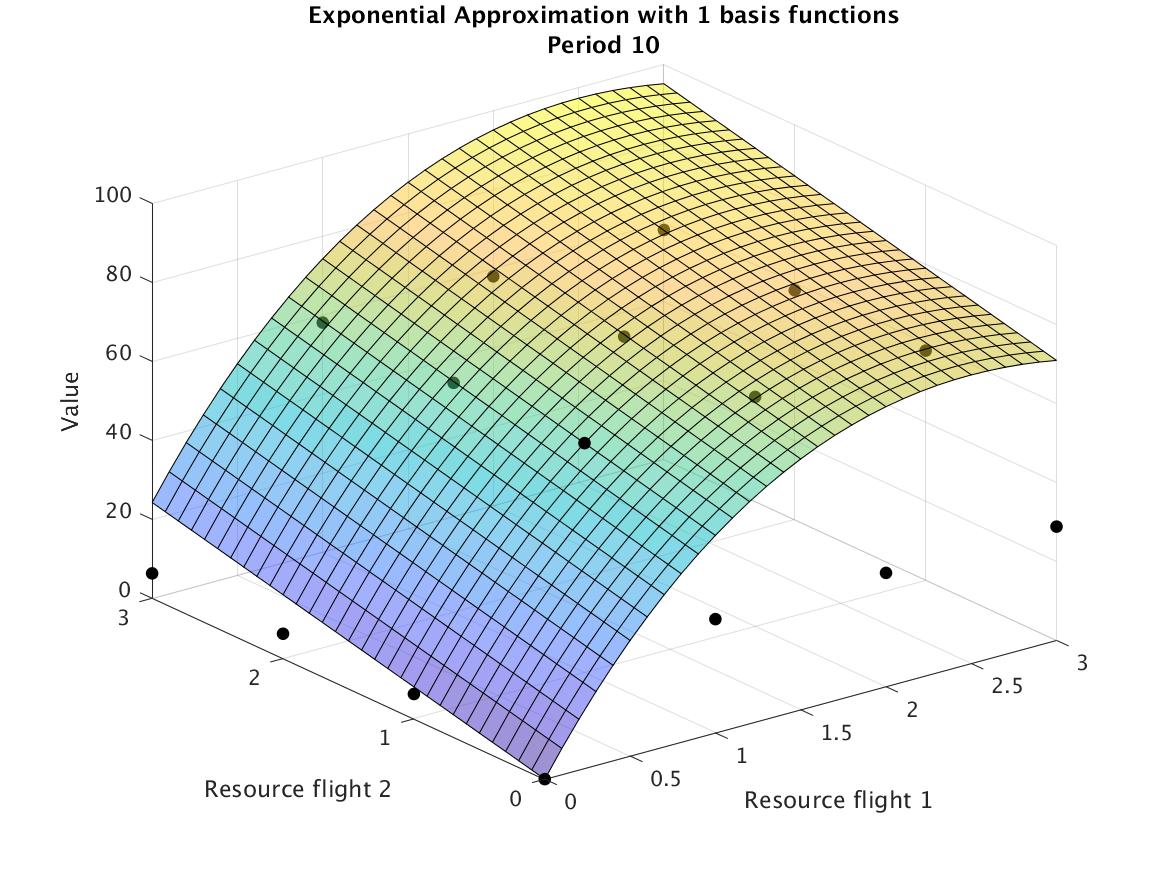} \\
		\includegraphics[width=170pt]{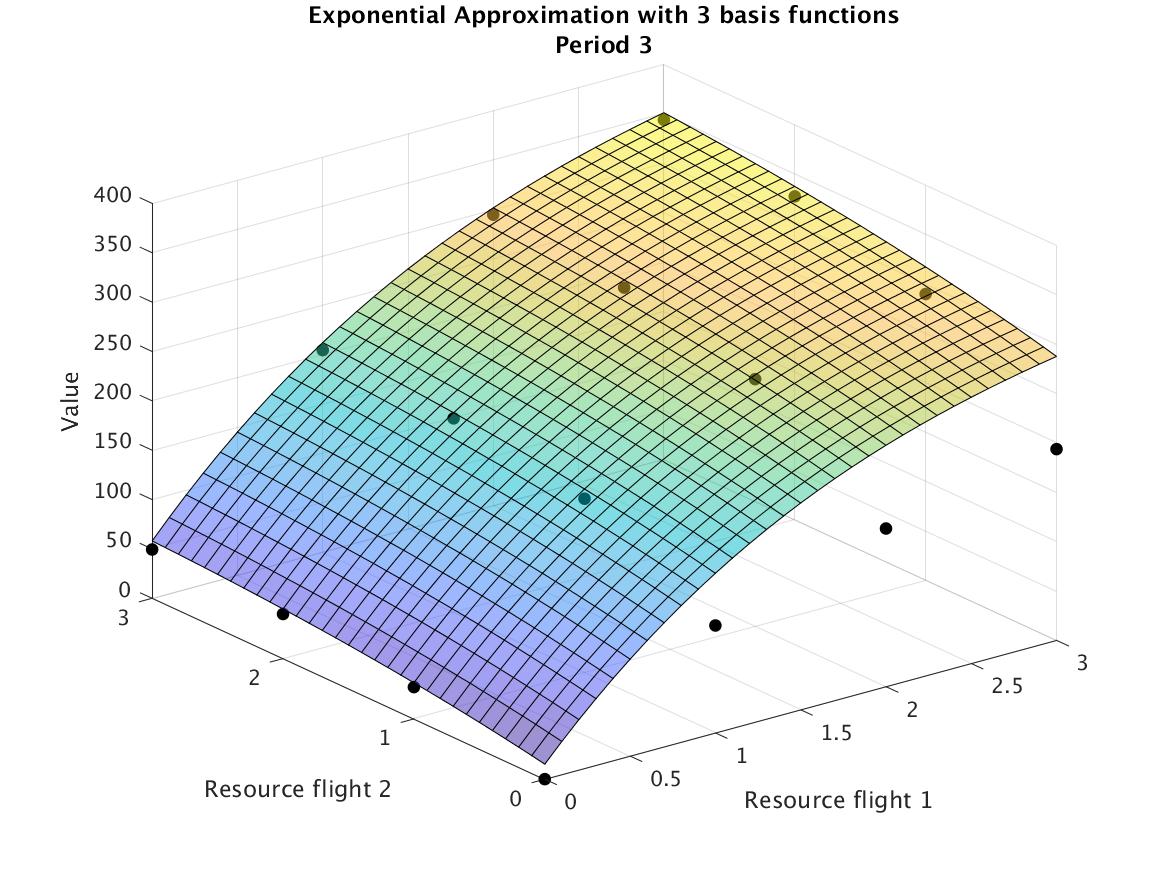} &  \includegraphics[width=170pt]{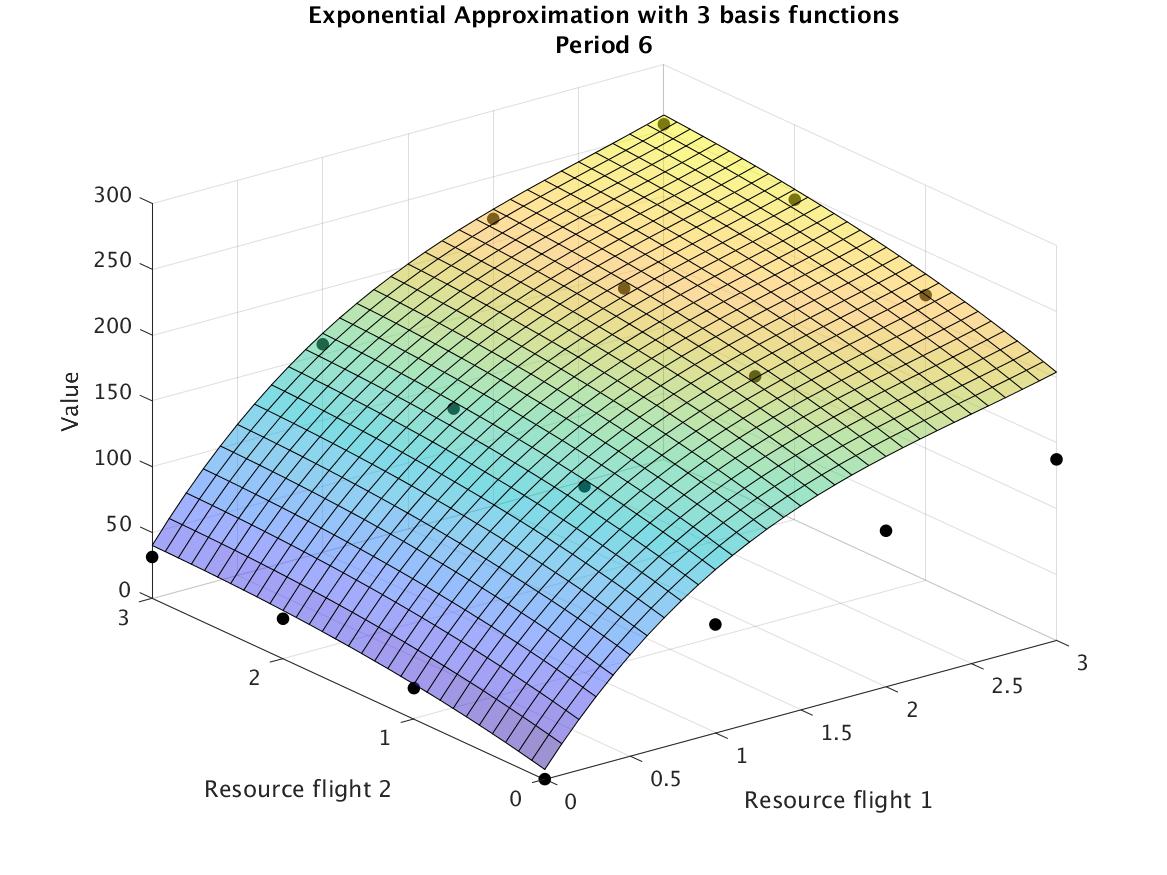} &\includegraphics[width=170pt]{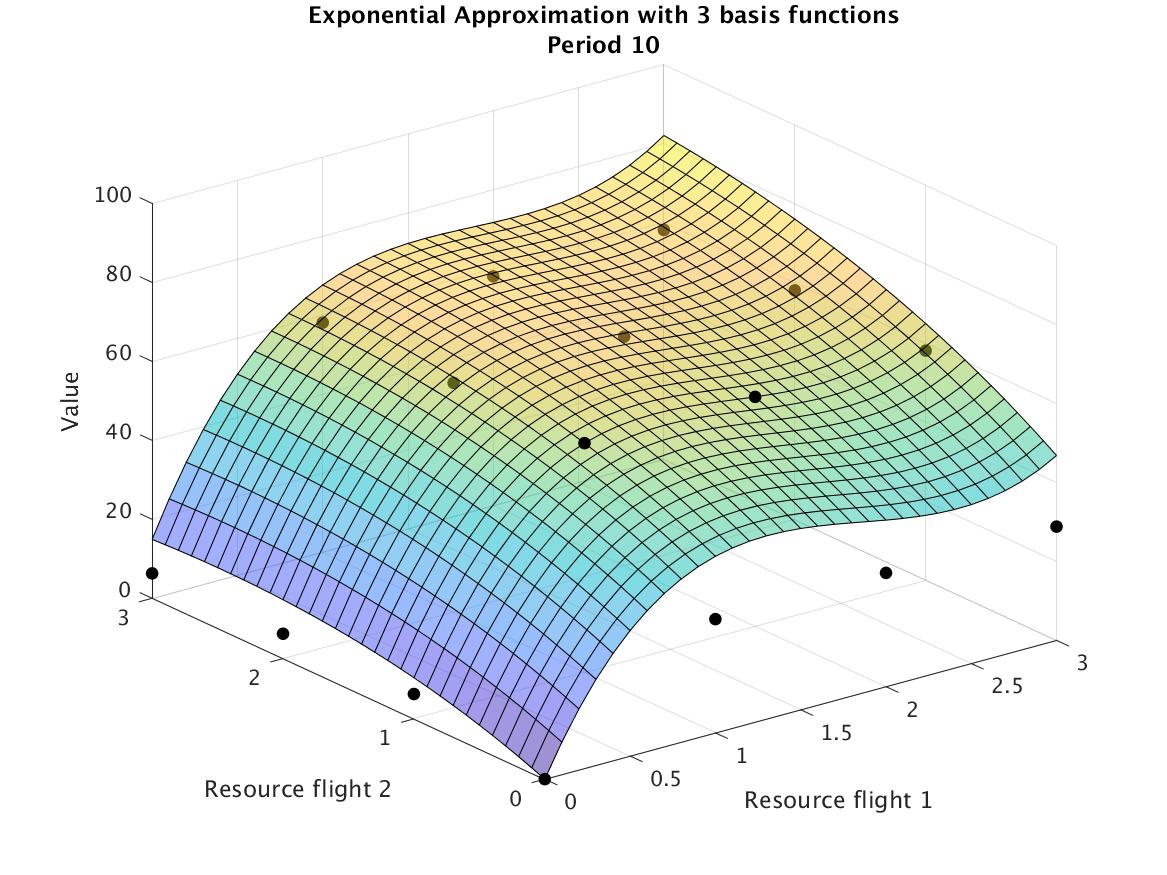} \\
		\includegraphics[width=170pt]{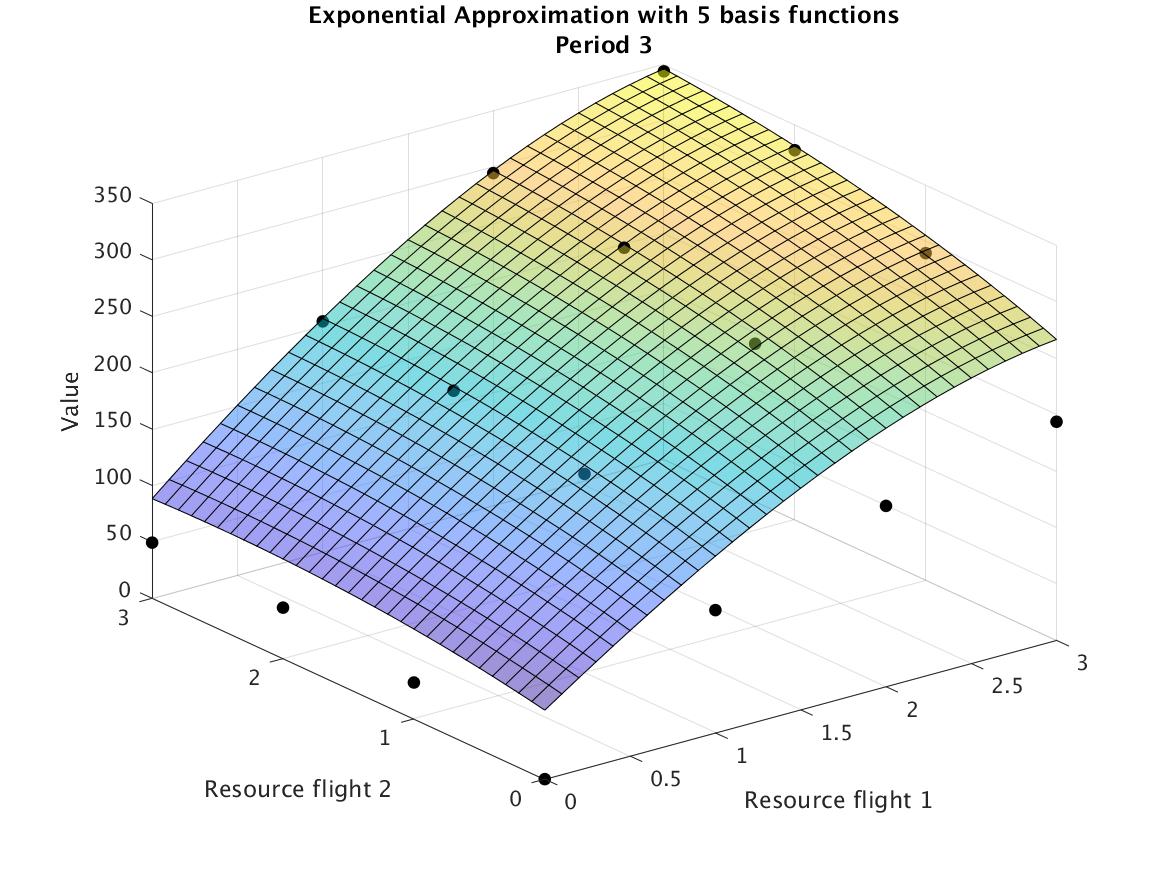} &  \includegraphics[width=170pt]{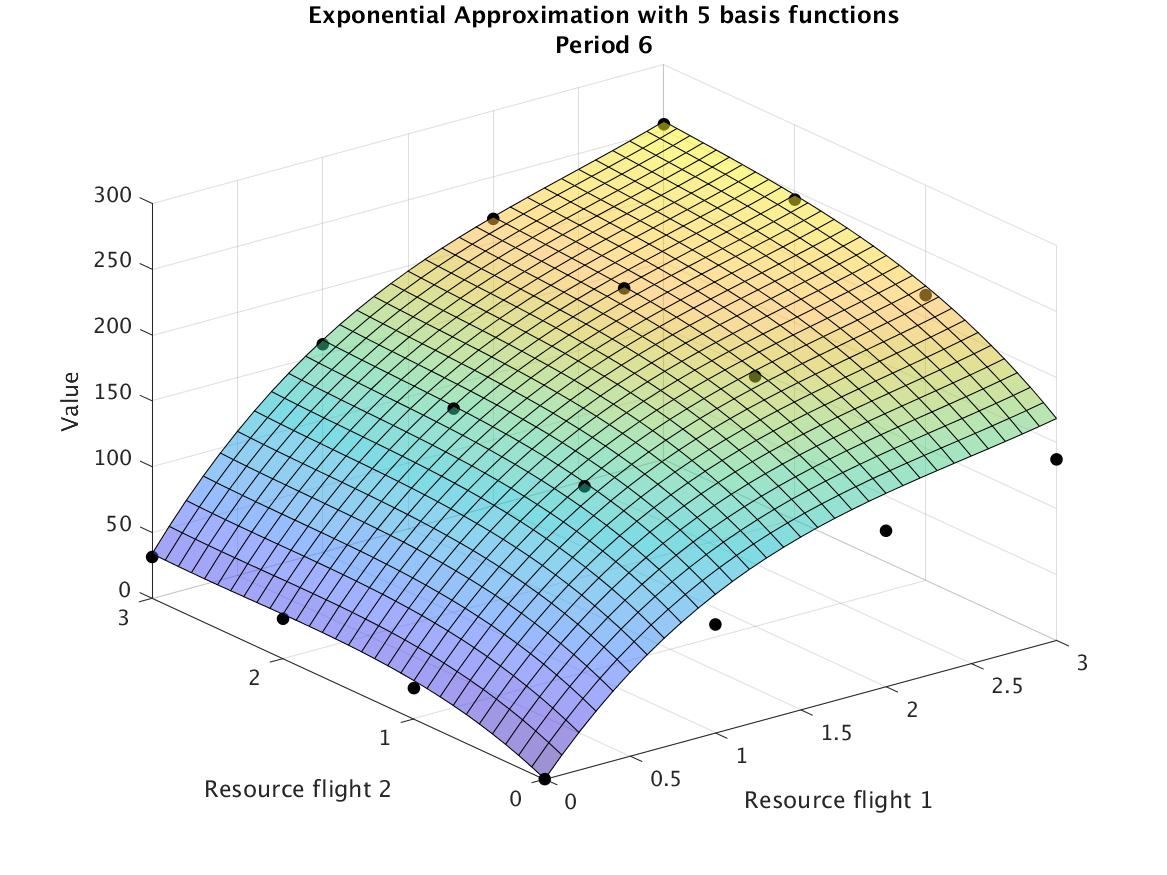} &\includegraphics[width=170pt]{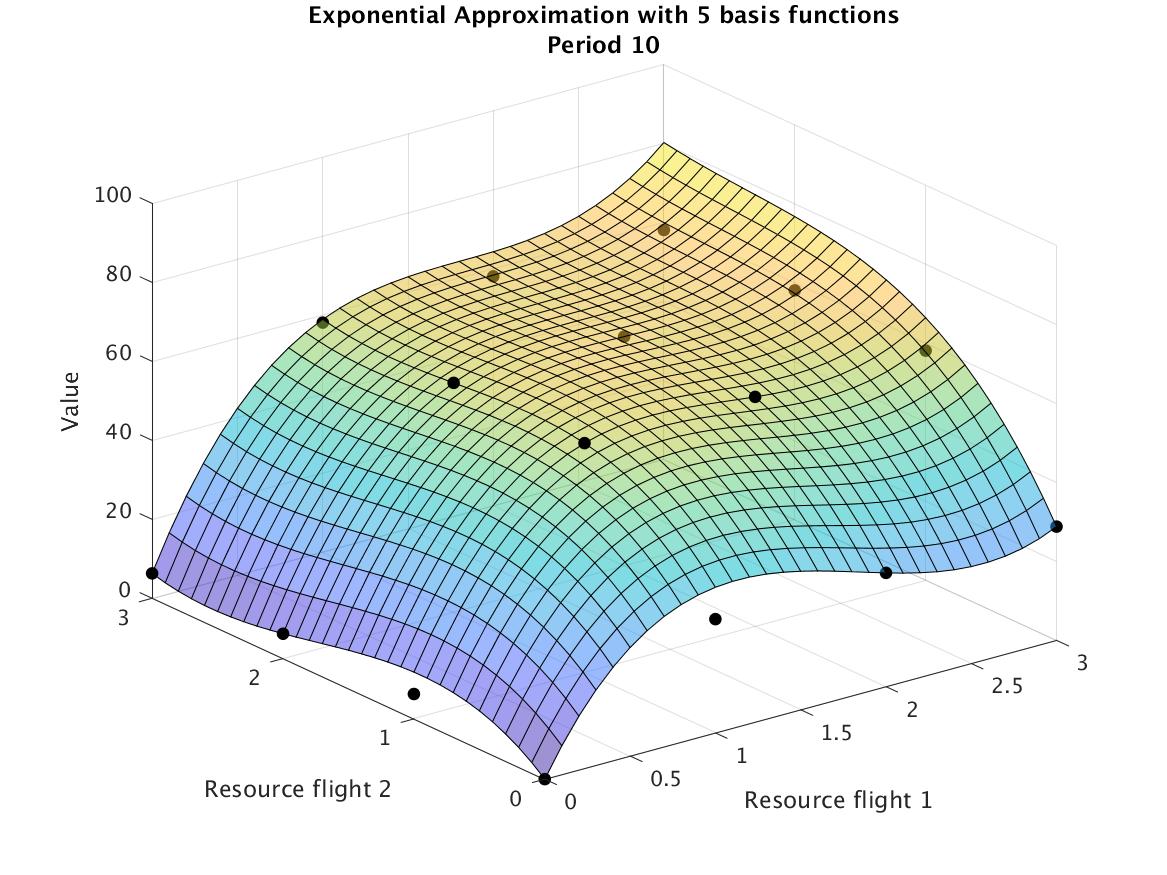} \\
	\end{tabular}\\
	\caption{\label{convergence_DC1}{Approximated value function through linear and exponential basis functions (NLIAlg) in the toy example introduced in Section \ref{sec:hat_vs_exponential}}}
\end{figure}

To further assess the tractability of NLIAlg,
Table \ref{DC_comparison} compares the NLIAlg against the H-2PIAlg for the smaller hub\&spoke instance in the paper. In this case, not only the NLIAlg takes longer than the H-2PIAlg, but the complexity of the DC master problem seems to make it difficult for the global solver to find a good solution within a reasonable time limit, affecting the convergence and the stability of the algorithm. This supports our decision to focus on the H-2PIAlg.

\begin{table}[h!]
	\centering	\begin{small}
		\begin{tabular}{l|ccc|ccc}
			&\multicolumn{3}{c|}{NLIAlg}&\multicolumn{3}{c}{H-2PIAlg}\\
			$K$ 	& $\hat{Z}_{\phi}$ & $\overline{R}$ & Time & $\hat{Z}_{\phi}$ &$\overline{R}$ & Time\\
			\hline									
			$1$	&	524.3	&	746.2	&	1223&	873.9	&	773.5	&	35	\\
			$2$	&	823.6	&	773.6	&	6347&	864.1	&	786.2	&	70	\\
			$3$	&	807.8	&	794.6	&	7873&	860.9	&	792.4	&	106	\\
			$4$	&	786.6	&	785.2	&	8495&	858.6	&	794.0	&	144	\\
			$5$	&&&&	853.1	&	796.2	&	188	\\
			$6$	&&&&	852.7	&	795.8	&	227	\\
			$7$	&&&&	850.5	&	798.7	&	274	\\
			$8$	&&&&	849.4	&	799.6	&	319	\\
			$9$	&&&&	845.6	&	800.1	&	369	\\
			$10$	&&&&	840.8	&	802.8	&	426	\\
			$11$	&&&&	840.6	&	805.5	&	482	\\
			$12$	&&&&	836.1	&	806.0	&	545	\\
			$13$	&&&&	828.8	&	808.6	&	645	\\
			$14$	&&&&	824.1	&	809.1	&	721	\\
			$15$	&&&&	821.1	&	809.2	&	857	\\
			$16$	&&&&	821.0	&	809.4	&	1140	\\
			$17$	&&&&	819.8	&	809.4	&	1514	\\
			$18$	&&&&	819.6	&	809.6	&	1838	\\
		\end{tabular}
	\end{small}
	\caption{\label{DC_comparison} Comparison of the NLIAlg and the H-2PIAlg for the smallest instance in Section \ref{sec:compare_baselines} with no baseline approximation (i.e., $\psi_t(\vx)=0$ for all $t=1,...,\tau$, $\vc \in \mathcal{X}_t$).}
\end{table}


\subsection{Standalone versus Add-on Mode. \label{sec:compare_baselines}}

In this section we compare the two different modes of H-2PIAlg, namely

\begin{itemize}
    \item[(1)] {\bf Standalone mode}: find a suitable value function approximation from scratch; i.e. $\psi_t(\vx)=0$,
    \item[(2)] {\bf Add-on mode}: enhance a given value function approximation; i.e., $\psi_t(\vx)\neq 0$.
\end{itemize}

For the add-on mode we choose the Affine Approximation \eqref{AA} because it exhibits Property (b); i.e., it is very tractable to generate rows since $\psi_t(\vx)$ is linear in $\vx$. In addition, it provides a reasonable approximation of the real value function.

  We use the smallest instances of the experimental setup specified in  \cite{adelman2007}: 2 non-hub locations and 2 fares, low and high, resulting in $4$ single-leg itineraries and $2$ two-leg itineraries. The initial capacity is the same for each flight, i.e. $c_i=c$ for all $i=1,...,I$. The values of  $c$ and $\tau$ are chosen such that the load factor,
i.e.~the ratio of demand to supply, is approximately $1.6$. Both  fares and arrival probabilities are randomly generated. For these small problems with $c=3$ and $c=8$, we can determine the true value function via value iteration, which takes 1,293 CPU seconds and  295,510 CPU seconds for the small and the large instances, respectively.

\begin{figure}[h!]
	\begin{tabular}{cc}
		\includegraphics[width=240pt]{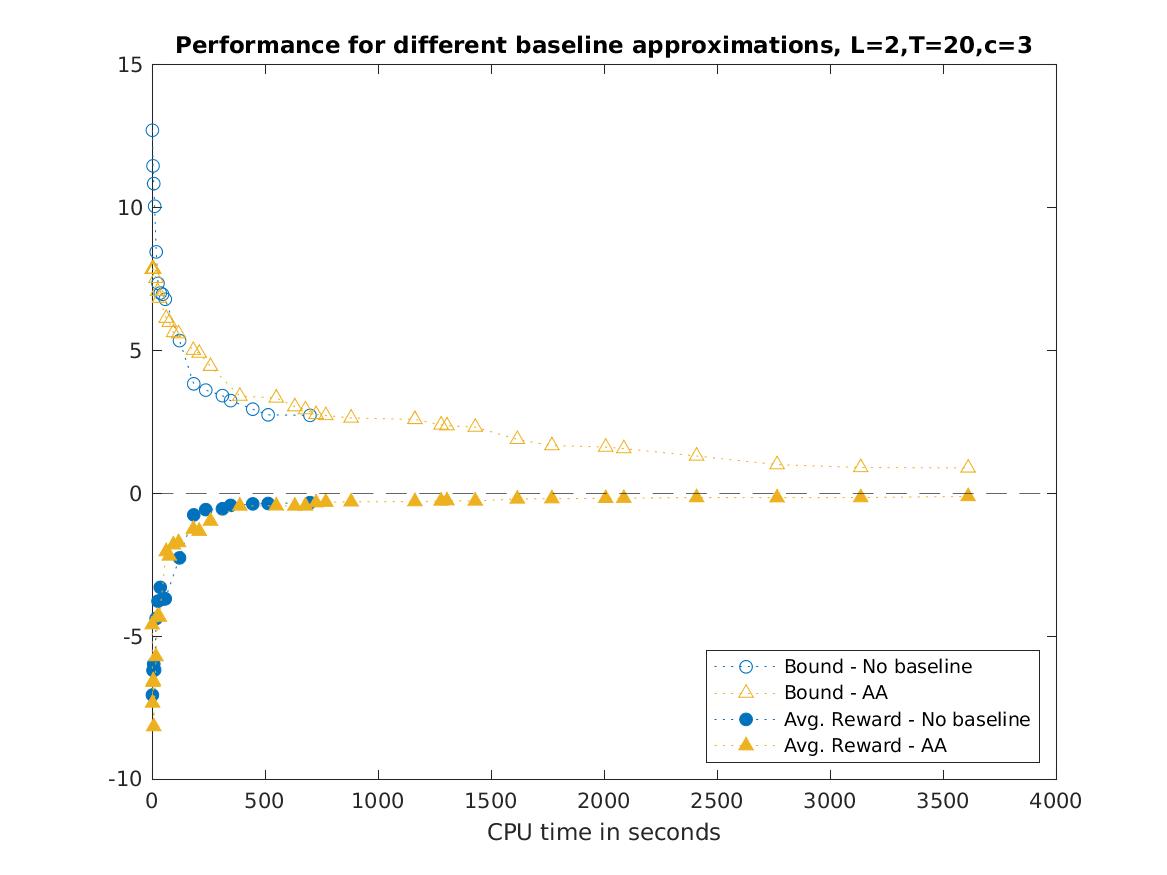} & \includegraphics[width=240pt]{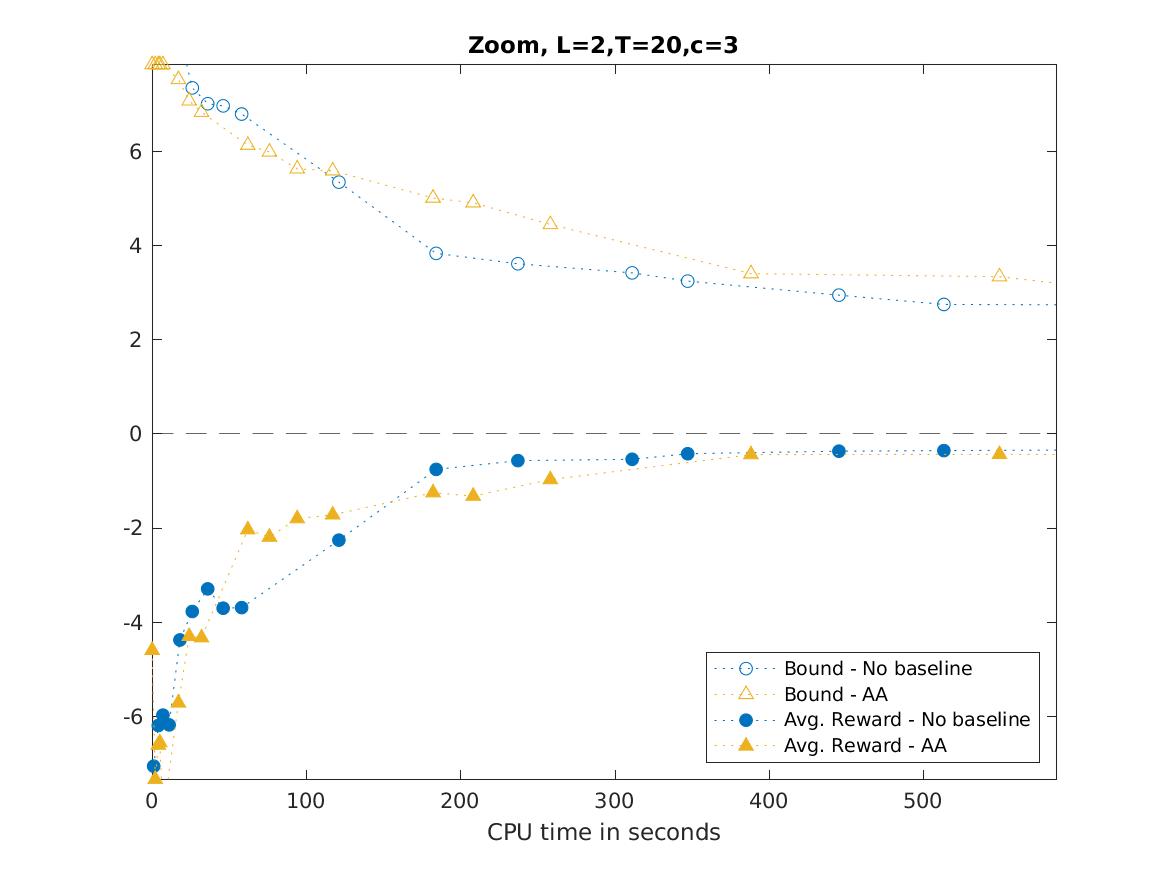}   \\
		\includegraphics[width=240pt]{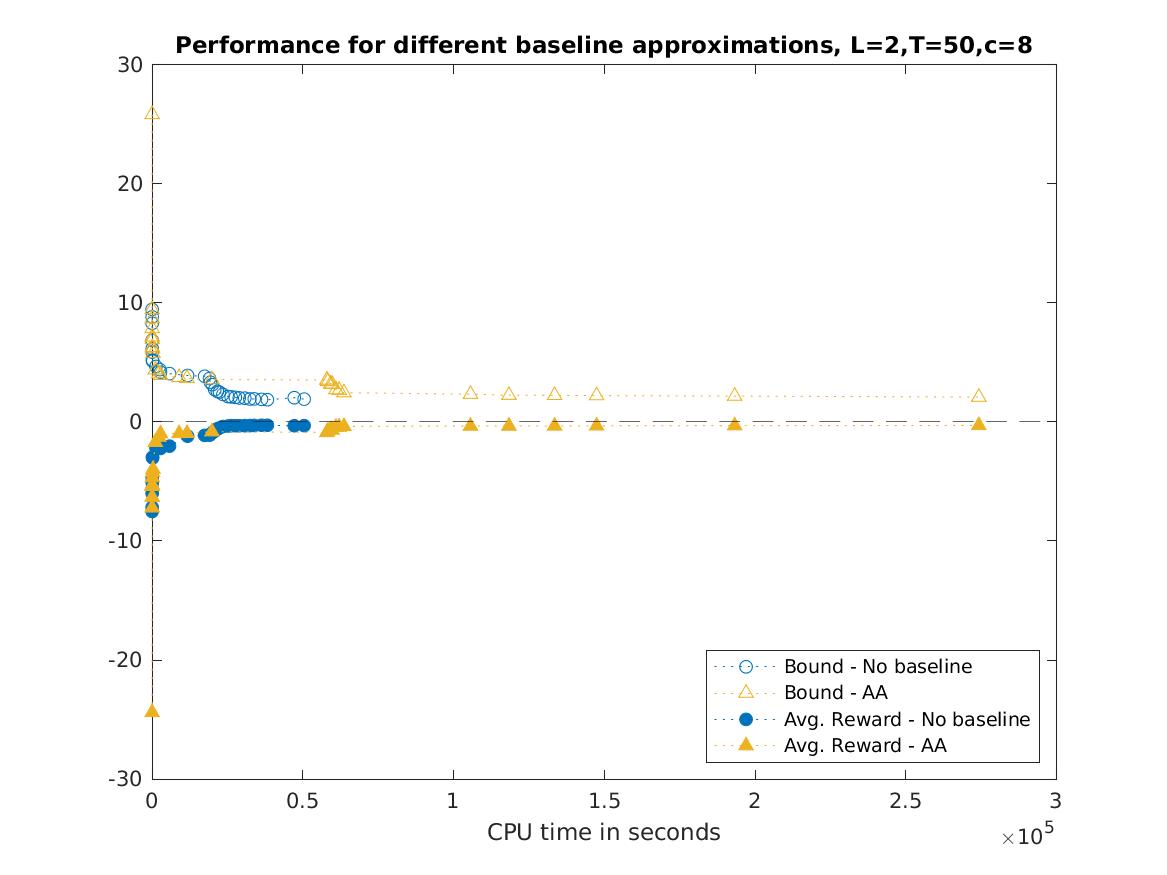} & \includegraphics[width=240pt]{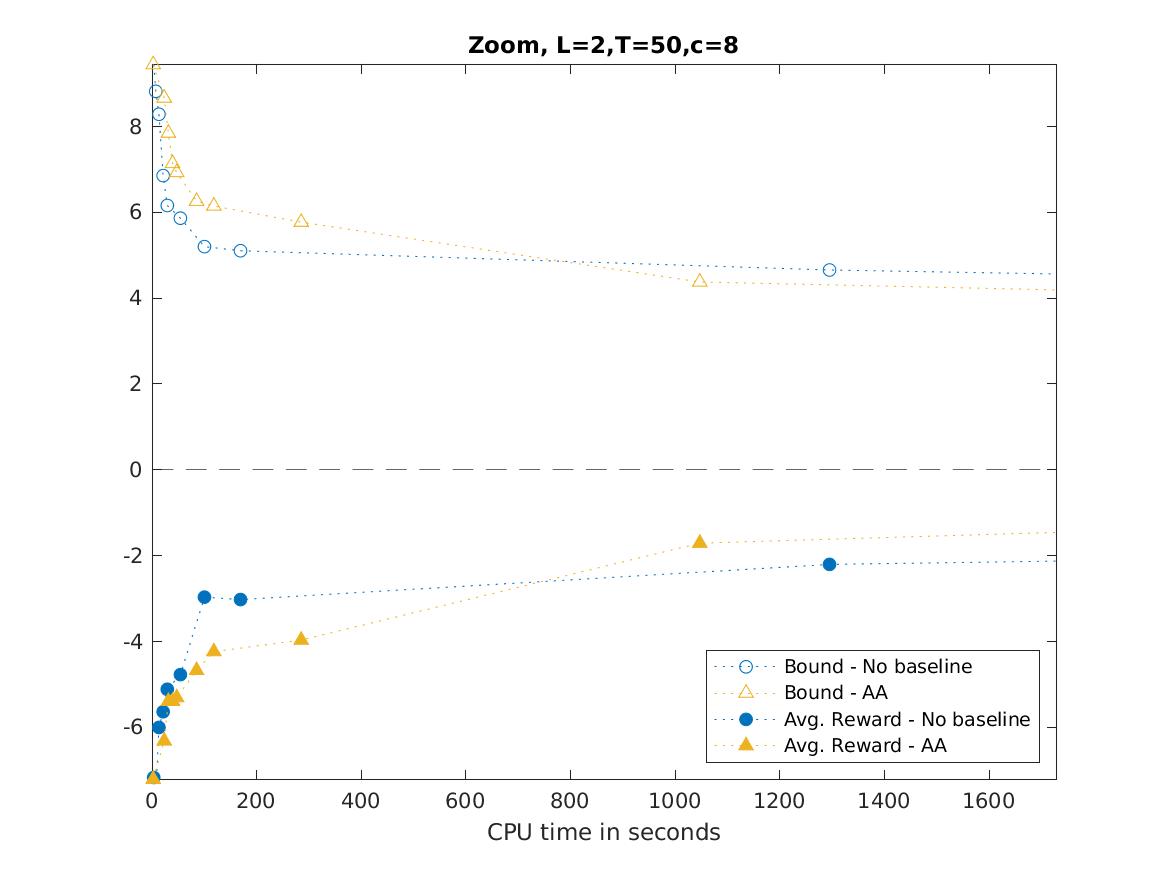}   \\
	\end{tabular}
	\caption{\label{convergence1}{Estimates of the upper bound $\hat{Z}_\phi=Z_{\mathcal{B}}+\sum_t \hat{\pi}_t$ and average revenue $\overline{R}$ from simulated policies for different baseline approximations vs. solving CPU times.}}
\end{figure}

Figure \ref{convergence1} shows both the estimated upper bound and the simulated average revenue generated by the value function approximation of the H-2PIAlg when increasing $K$, which translates in increasing CPU times. We report all bounds and average revenues as the relative difference with respect to the true value $v_1(\vc)$. More specifically, the normalization performed is 
\begin{equation}
\Delta\eta_A=\left(\frac{\eta_A}{v_1(\vc)}-1 \right)\ccdot 100 \%,
\end{equation}
where $\eta_A$ represents the estimated upper bound or the average revenue of an approximation $A$.

Since H-2PIAlg adds new basis functions to the baseline approximations, it is not surprising that they provide tighter estimates of upper bounds even for $K=1$. Average simulated revenues of the improved approximations, i.e., for $K\ge 1$, are larger than the revenues obtained by the policies based on the baseline approximations with $K=0$. The more basis functions are added (i.e., the larger the $K$), the lower the values of the objective function $\hat{Z}_\mathcal{\phi}$ and the higher the simulated average revenues $\overline{R}$. Interestingly, the initial approximation does not seem to strongly affect the convergence of H-2PIAlg. In other words, for different initial approximations $\psi_t(\cdot)$ the H-2PIAlg converges to the same revenue $\overline{R}$. A similar value $\hat{Z}_\phi$ is also achieved regardless of the choice of $\psi_t(\cdot)$. 

\subsection{CPU times and number of basis functions generated \label{times}}

\begin{table}[h!]																	
\centering																	
\begin{small}																	
\begin{tabular}{@{} *{3}{S[table-format=3.0]}|*{5}{S[table-format=6.0]}|*{1}{S[table-format=2.2]}@{}*{1}{S[table-format=2.0]}}														
\multicolumn{3}{c|}{}	&			\multicolumn{5}{c|}{ CPU times}	&			\multicolumn{2}{c}{H-2PIAlg Analysis}			\\
	&		&		&	{SPLA}	& {NSEP} &{AA}	&	{ H-2PIAlg}	&	{ H-2PIAlg+AA}	&  {Gap}	& {K}		\\
\cline{4-10}	
{ $L$}	&	{ $\tau$}	&	 {$c$} & \multicolumn{7}{c}{\bf Hub-and-Spoke instances (H\&S)}\\
\hline\hline
{\multirow{6}{*}{2}}	&	20	&	3	&	15	&	18498	&	66	&	6366	&	6432	&	0.90	&	32	\\
	&	50	&	8	&	30	&	1860	&	58	&	323127	&	323185	&	2.12	&	31	\\
	&	100	&	17	&	52	&	22212	&	122	&	549877	&	549999	&	3.04	&	23	\\
	&	200	&	33	&	98	&	$^\dagger$  	&	116	&	539171	&	539287	&	4.45	&	12	\\
	&	500	&	83	&	508	&	$^\dagger$  	&	184	&	154	&	338	&	4.88	&	2	\\
	&	1000	&	165	&	2465	&	$^\dagger$  	&	185	&	360	&	545	&	3.40	&	2	\\
\hline																			
{\multirow{6}{*}{3}}	&	20	&	2	&	18	&	20980	&	304	&	113962	&	114266	&	1.00	&	43	\\
	&	50	&	6	&	36	&	1737	&	278	&	372511	&	372789	&	3.89	&	37	\\
	&	100	&	12	&	64	&	176	&	395	&	227744	&	228139	&	5.38	&	28	\\
	&	200	&	25	&	142	&	$^\dagger$  	&	371	&	350985	&	351356	&	4.66	&	20	\\
	&	500	&	62	&	841	&	$^\dagger$  	&	501	&	97710	&	98211	&	3.79	&	13	\\
	&	1000	&	124	&	5330	&	$^\dagger$  	&	481	&	496545	&	497026	&	5.59	&	12	\\
\hline																			
{\multirow{6}{*}{5}}	&	20	&	2	&	196	&	17151	&	794	&	130191	&	130985	&	5.10	&	42	\\
	&	50	&	4	&	220	&	4006	&	729	&	584713	&	585442	&	8.26	&	39	\\
	&	100	&	8	&	261	&	56890	&	1045	&	601360	&	602405	&	10.02	&	38	\\
	&	200	&	17	&	402	&	$^\dagger$  	&	975	&	503696	&	504671	&	10.20	&	19	\\
	&	500	&	41	&	5800	&	$^\dagger$  	&	1315	&	434145	&	435460	&	7.07	&	15	\\
	&	1000	&	83	&	$^\dagger$  	&	$^\dagger$  	&	1239	&	534326	&	535565	&	7.97	&	13	\\
\hline																			
{\multirow{6}{*}{10}}	&	20	&	1	&	49	&	$^\dagger$  	&	3090	&	203039	&	206129	&	8.99	&	37	\\
	&	50	&	2	&	97	&	$^\dagger$  	&	2774	&	473233	&	476007	&	26.48	&	15	\\
	&	100	&	5	&	179	&	$^\dagger$  	&	4566	&	380178	&	384744	&	19.99	&	18	\\
	&	200	&	9	&	471	&	$^\dagger$  	&	4270	&	498801	&	503071	&	19.12	&	16	\\
	&	500	&	23	&	7253	&	$^\dagger$  	&	5962	&	489085	&	495047	&	15.54	&	13	\\
	&	1000	&	45	&	$^\dagger$  	&	$^\dagger$  	&	5704	&	312874	&	318578	&	13.78	&	11	\\
\hline																			
{\multirow{6}{*}{20}}	&	20	&	1	&	150	&	$^\dagger$  	&	19263	&	544595	&	563858	&	15.68	&	19	\\
	&	50	&	2	&	327	&	$^\dagger$  	&	19426	&	5208	&	24634	&	34.30	&	1	\\
	&	100	&	2	&	507	&	$^\dagger$  	&	31175	&	5317	&	36492	&	30.43	&	2	\\
	&	200	&	5	&	984	&	$^\dagger$  	&	30654	&	203212	&	233866	&	25.32	&	6	\\
	&	500	&	12	&	$^\dagger$  	&	$^\dagger$  	&	41782	&	18424	&	60206	&	20.92	&	2	\\
	&	1000	&	24	&	$^\dagger$  	&	$^\dagger$  	&	40914	&	359286	&	400200	&	17.68	&	5	\\
\hline
\multicolumn{10}{c}{\bf Bus instances}\\
\hline
\hline
\multicolumn{3}{r|}{Simple (SBL)}					&	43	&	532	&	43	&	290389	&	290432	&	8.20	&	41	\\
\multicolumn{3}{r|}{Consecutive (CBL)}					&	14	&	218	&	18	&	552779	&	552797	&	3.54	&	60	\\
\multicolumn{3}{r|}{Realistic (RBL)}					&	3635	&	$^\dagger$  	&	19	&	57978	&	57997	&	7.10	&	16	\\
\end{tabular}																	
\end{small}																	
\caption{CPU times in seconds of SPLA, NSEP, AA, and  H-2PIAlg with AA baseline for the hub-and-spoke and bus networks.  For H-2PIAlg, we display both the times taken to merely enhance AA, and the times including the computation of the AA baseline (H-2PIAlg+AA column). The last set of columns display the optimality gap (UB vs LB) of H-2PIAlg and the number of exponential ridge basis functions for which the best policy was obtained.  \label{table:times}}	
\end{table}


\bibliographystyle{jf}
\bibliography{references}
\end{document}